\newtheorem{thm}{Theorem}[section]
\newtheorem{lem}[thm]{Lemma}
\newtheorem{rem}[thm]{Remark}
\newtheorem{assump}[thm]{Assumption}
\numberwithin{equation}{section}
\newcommand{\mca}{\mathcal{A}}
\newcommand{\mcb}{\mathcal{B}}
\newcommand{\mcc}{\mathcal{C}}
\newcommand{\mcf}{\mathcal{F}}
\newcommand{\mcl}{\mathcal{L}}
\newcommand{\mbbh}{\mathbb{H}}
\newcommand{\mbbn}{\mathbb{N}}
\newcommand{\mbbr}{\mathbb{R}}
\newcommand{\mbby}{\mathbb{Y}}
\newcommand{\mbbzp}{\mathbb{Z}_{+}}
\newcommand{\al}{\alpha}  \newcommand{\ep}{\epsilon} 
\newcommand{\ve}{\varepsilon} 
\newcommand{\vp}{\varphi} \newcommand{\del}{\delta} \newcommand{\sig}{\sigma}
\newcommand{\D}{\Delta}  
\newcommand{\Lam}{\Lambda} \newcommand{\Gam}{\Gamma}
\newcommand{\p}{\partial}
\newcommand{\ra}{\rangle} \newcommand{\la}{\langle}
\newcommand{\cil}{\xrightarrow{\mcl}} 
\newcommand{\cip}{\xrightarrow{p}} 
\newcommand{\argmax}{\mathop{\rm argmax}}
\def\ds#1{\displaystyle{#1}}
\def\nn{\nonumber}
\def\hmrev#1{\textcolor{black}{#1}}	
\def\hmrrev#1{\textcolor{black}{#1}}	
\def\yurev#1{\textcolor{black}{#1}}	
\newcommand{\pr}{P} \newcommand{\E}{E}
\def\var{{\rm var}}
\def\diag{{\rm diag}}
\def\lp{L\'evy process}
\def\lm{L\'evy measure}
\def\ld{L\'evy density}
\def\wp{Wiener process}
\def\cadlag{c\`adl\`ag}
\newcommand{\sumjj}{\sum_{j=1}^{N_n}}
\newcommand{\tz}{\theta_{0}}
\newcommand{\nes}{\hat{\nu}_{n}}
\newcommand{\mes}{\hat{\mu}_{n}}
\newcommand{\ses}{\hat{\sig}_{n}}
\newcommand{\hep}{\hat{\varepsilon}}
\newcommand{\tep}{\tilde{\varepsilon}}
\newcommand{\sumi}{\sum_{i=1}^{[T_n]}}
\newcommand{\yuima}{YUIMA }
\begin{document}

\title[Quasi-likelihood analysis for Student-L\'{e}vy regression]
{Quasi-likelihood analysis for Student-L\'{e}vy regression}


\author*[1]{\fnm{Hiroki} \sur{Masuda}}\email{hmasuda@ms.u-tokyo.ac.jp}

\author[2]{\fnm{Lorenzo} \sur{Mercuri}}\email{lorenzo.mercuri@unimi.it}
\equalcont{These authors contributed equally to this work.}

\author[3]{\fnm{Yuma} \sur{Uehara}}\email{y-uehara@kansai-u.ac.jp}
\equalcont{These authors contributed equally to this work.}

\affil*[1]{\orgdiv{Graduate School of Mathematical Sciences}, \orgname{University of Tokyo}, \orgaddress{\country{Japan}}}

\affil[2]{\orgdiv{Department of Economics, Management and Quantitative Methods}, \orgname{University of Milan}, \orgaddress{\country{Italy}}}

\affil[3]{\orgdiv{Department of Mathematics, Faculty of Engineering Science}, \orgname{Kansai University}, \orgaddress{\country{Japan}}}


\abstract{
We consider the quasi-likelihood analysis for a linear regression model driven by a Student-$t$ L\'{e}vy process with constant scale and arbitrary degrees of freedom. The model is observed at high frequency over an extending period, under which we can quantify how the sampling frequency affects estimation accuracy. In that setting, joint estimation of trend, scale, and degrees of freedom is a non-trivial problem. The bottleneck is that the Student-$t$ distribution is not closed under convolution, making it difficult to estimate all the parameters fully based on the high-frequency time scale. To efficiently deal with the intricate nature from both theoretical and computational points of view, we propose a two-step quasi-likelihood analysis: first, we make use of the Cauchy quasi-likelihood for estimating the regression-coefficient vector and the scale parameter; then, we construct the sequence of the unit-period cumulative residuals to estimate the remaining degrees of freedom.
In particular, using full data in the first step causes a problem stemming from the small-time Cauchy approximation, showing the need for data thinning.
%
}

\keywords{Cauchy quasi-likelihood, high-frequency sampling, likelihood analysis, Student L\'{e}vy process.}



\maketitle

\section{Introduction}
\label{Intro}

Suppose that we have a discrete-time sample $\{(X_{t_j},Y_{t_j})\}_{j=0}^{[nT_n]}$ with $t_j=j/n$ from the continuous-time (location) regression model
\begin{equation}
Y_t = X_t \cdot \mu + \sig J_t, \qquad t\in[0,T_n],
\label{hm:model}
\end{equation}
where $X=(X_t)$ is a {\cadlag} stochastic covariate process in $\mbbr^q$ satisfying some regularity conditions, the dot denotes the inner product in $\mbbr^q$, and $J=(J_t)$ is a {\lp} such that the unit-time distribution 
\begin{equation}
\mcl(J_1) = t_\nu:=t_\nu(0,1),
\nonumber
\end{equation}
where $t_\nu(\mu,\sig)$ denotes the scaled Student-$t$ distribution with the density
\begin{equation}
f(x;\mu,\sig,\nu) := \frac{\Gamma(\frac{\nu+1}{2})}{\sig\sqrt{\pi}\Gamma(\frac{\nu}{2})}
\left\{1+\left(\frac{x-\mu}{\sig}\right)^2\right\}^{-(\nu+1)/2}.
\label{hm:def_f.nu.pdf}
\end{equation}
Our objective is to estimate the true value $\tz=(\mu_0,\sig_0,\nu_0)\in\Theta$ from $\{(X_{t_j},Y_{t_j})\}_{j=0}^{[nT_n]}$ when $T_n\to\infty$ for $n\to\infty$.
We will write $h=h_n=1/n$ for the sampling step size.
All the processes are defined on an underlying filtered probability space $(\Omega,\mcf,(\mcf_t)_{t\ge 0},\pr)$.
The corresponding statistical model is indexed by the unknown parameter $\theta := (\mu,\sig,\nu) \in \Theta_\mu\times\Theta_\sig \times \Theta_\nu=\Theta$. Throughout, we assume that $\Theta$ is a bounded convex domain in $\mbbr^{q+2}$ such that its closure $\overline{\Theta} \subset \mbbr^q \times (0,\infty) \times (0,\infty)$.

Since the Student-$t$ distribution is not closed under convolution, $\mcl(J_h)$ for $h\ne 1$ is no longer Student-$t$-distributed.
The exact likelihood function is given only through the Fourier inversion, resulting in the rather intractable expression:
the characteristic function of $\mcl(J_1)=t_\nu$ is given by
\begin{equation}
\vp_{J_1,\nu}(u) := \frac{2^{1-\nu/2}}{\Gam(\nu/2)} |u|^{\nu/2} K_{\nu/2}(|u|), \qquad u\in\mbbr,
\label{hm:t_nu(0,1)-CF}
\end{equation}
hence $\mcl(J_h)$ admits the Lebesgue density
\begin{align}
x &\mapsto 
\frac{1}{\pi}\int_0^\infty \cos(ux)\{\vp_{J_1,\nu}(u)\}^h du
\nn\\
&= \frac{1}{\pi}\int_0^\infty \cos(ux)\left(
\frac{2^{1-\nu/2}}{\Gam(\nu/2)} u^{\nu/2} K_{\nu/2}(u)
\right)^h du
\nn\\
&= \left(\frac{2^{1-\nu/2}}{\Gam(\nu/2)} \right)^h
\frac{1}{\pi}\int_0^\infty \cos(ux) u^{\nu h/2} \left(K_{\nu/2}(u)\right)^h du,
\label{hm:ff_def_pre}
\end{align}
where $K_\nu(t)$ denotes the modified Bessel function of the second kind ($\nu\in\mbbr$, $t>0$):
\begin{equation}
K_{\nu}(t)=\frac{1}{2}\int_0^{\infty}s^{\nu-1}\exp\left\{-\frac{t}{2}\left(s+\frac{1}{s}\right)\right\}ds.
\nonumber
\end{equation}
Numerical integration in \eqref{hm:ff_def_pre} can be unstable for very small $h$ and also for large $|x|$.

Previously, the thesis \cite{Massing2019} studied in detail the local asymptotic behavior of the log-likelihood function with numerics for a sample-path generation about Student-$t$ {\lp} observed at high frequency. However, the degrees-of-freedom parameter $\nu$ was supposed to be known and to be greater than $1$ throughout.

In this paper, we propose the explicit two-stage estimation procedure:
\begin{enumerate}
\item First, in Section \ref{hm:sec_Cauchy.QLA}, we construct an estimator $(\mes,\ses)$ of $(\mu,\sig)$ based on the \textit{Cauchy quasi-likelihood};
\item Then, in Section \ref{hm:sec_student.LA}, we make use of the \textit{Student-$t$ quasi-likelihood} for construction of an estimator $\nes$ of $\nu$, 
through the ``unit-time'' residual sequence
\begin{equation}
\hep_i := \ses^{-1}\left(Y_i - Y_{i-1} - \mes\cdot(X_i - X_{i-1})\right),
\nonumber
\end{equation}
for $i=1,\dots,[T_n]$, which is expected to be approximately i.i.d. $t_\nu$-distributed.
\end{enumerate}
\hmrev{
\yurev{For both estimators}, we will prove the asymptotic normality and convergence of their moments (Theorems \ref{hm:thm_CQLA}, \ref{hm:thm_tQLA}, and \ref{hm:thm_joint.AN}).
Also discussed is how we can weaken the regularity conditions when only interested in deriving the asymptotic normality (Theorems \ref{hm:thm_CQMLE.AN} and \ref{hm:thm_CQMLE.AN-SDE}).
}
The important point is that we can asymptotically efficiently estimate $(\mu,\sig)$ with effectively leaving $\nu$ unknown;
under additional conditions, the least-squares estimator is asymptotically normal, but the associated efficiency loss can be significant (see Remark \ref{hm:rem_LSE} below).
We refer to \cite{IvaKulMas15} for the details of the corresponding LAN property.
In our study, the rate of convergence of $(\mes,\ses)$ in the first stage depends on how much data we use through the sequence $(B_n)$ introduced later in \eqref{hm:B_order}.
Under suitable conditions, the foregoing two-stage estimation procedure enables us to estimate $\nu_0$ as if we directly observe the latent unit-time noise sequence $(J_{i}-J_{i-1})_{i=1}^{[T_n]}$.
Since the characteristic function and the {\lm} of the Student-$t$ distribution are both intractable, we believe that 
using the Student-$t$ likelihood function based on the unit-time residuals is natural from the computational viewpoint.
We will present some numerical experiments in Section \ref{sec_numerics}.
The proofs are gathered in Section \ref{hm:sec_proofs}.

Note that $X$ may be random and that in some situations $X$ and $J$ may be stochastically dependent on each other. 
Our stepwise procedure with different time scales makes the optimization much easier than the joint estimation.
The proposed estimators $\ses$ and $\nes$ are asymptotically independent (Theorem \ref{hm:thm_joint.AN}) while the maximum-likelihood estimators in the conventional i.i.d. and time-series settings are asymptotically correlated; see \cite[Section 2.2]{Har13} for details.

\hmrev{
Below, we briefly list the necessary restrictions in our approach without details.
\begin{itemize}
\item First, we need $T_n\to\infty$ in the second step for estimating $\nu_0$.
An asymptotically efficient estimation of $\nu$ based on the full high-frequency sample $\{(X_{t_j},Y_{t_j})\}_{j=0}^{[nT_n]}$ is a non-trivial problem, and we do not know even whether or not the derived rate $\sqrt{T_n}$ is optimal for estimating $\nu_0$. We do not address it in this paper.
\item Second, the terminal sampling time $T_n$ should not grow so quickly if we use whole data in $[0,T_n]$; otherwise, we need thinning data through the sequence $(B_n)$ satisfying the sampling-balance conditions \eqref{hm:B_order} and \eqref{T/N->0} below.
This implies that the accumulated Cauchy-approximation error (see Lemma \ref{hm:lem_loc.lim.thm}) for rapidly growing $T_n$ may 
crush the clean-cut asymptotic behavior of the estimator $\nes$ described in Theorem \ref{hm:thm_tQLA} (hence also those in Theorems \ref{hm:thm_joint.AN}, \ref{hm:thm_CQMLE.AN}, and \ref{hm:thm_CQMLE.AN-SDE}).
We also imposed the additional sampling-design condition \eqref{B/T->0} to establish the asymptotic orthogonality between the proposed estimators of $(\mu_0,\sig_0)$ and $\nu_0$.
\end{itemize}
}

\medskip

We will denote by $\pr_\theta$ the underlying probability measure for $(J, X, Y)$ associated with $\theta$, and by $\E_\theta$ the expectation with respect to $\pr_\theta$. Unless otherwise mentioned, any asymptotics will be taken under $\pr:=\pr_{\tz}$ for $n\to\infty$.
We will denote by $C$ a universal positive constant which is independent of $n$ and may vary whenever it appears.
The partial differentiation with respect to the variable $x$ will be denoted by $\p_x$; we simply use \hmrrev{$\p$} without specifying a variable if not confusing.
The notation $a_n\lesssim b_n$ for positive sequences $(a_n)$ and $(b_n)$ means that $\limsup_n(a_n/b_n)<\infty$.
We denote by $I_A=I(A)$ the indicator function of a set $A$.

\section{Cauchy quasi-likelihood analysis}
\label{hm:sec_Cauchy.QLA}

\subsection{Construction and asymptotic results}

By the expression \eqref{hm:t_nu(0,1)-CF} and the asymptotic property $K_\nu(z)\sim \sqrt{\pi/(2z)} e^{-z}$ for $z\to \infty$, 
the characteristic function of $h^{-1}J_h$ equals for $h\to 0$,
\begin{align}
\left(\E_\theta\left[\exp(iu h^{-1}J_1)\right]\right)^h
&= (1+o(1)) \big|u\nu^{-1/2}h^{-1}\big|^{\nu h/2} K_{\nu/2}\big(|uh^{-1}|\big)^h
\nn\\
&= (1+o(1)) \left(
\sqrt{\frac{\pi h}{2|u|}} \exp\left( - h^{-1}|u|\right)(1 + o(1))\right)^h
\nn\\
&\to \exp(-|u|).
\label{hm:Cauchy.approx_CF}
\end{align}
This implies the weak convergence $h^{-1}J_h \cil t_1$ (the standard Cauchy distribution) as $h\to 0$, whatever the degrees of freedom $\nu_0>0$ is.
It is therefore natural to construct the Cauchy quasi-(log-)likelihood which is defined as if the conditional distribution $Y_{t_j} | \{Y_{t_{j-1}}=y,\, X_{t_{j-1}},\, X_{t_j}\}$ under $\pr_\theta$ equals the Cauchy distribution with location $y+ \mu\cdot(X_{t_j}-X_{t_{j-1}})$ and scale $h\sig$; note that the conditional likelihood is misspecified unless $\nu_0=1$.
There are both advantages and disadvantages:
on the one hand, we can make an inference for $(\mu,\sig)$ without knowing the value $\nu_0$, but on the other hand, the information of $\nu_0$ disappeared in the small-time limit.

Using the Cauchy quasi-likelihood is not free:
we have to manage the Cauchy-approximation error for $\mcl(h^{-1}J_h)$ in the mode of $L^1(dy)$-local-limit theorem.
Since the approximation error accumulates for $T_n\to\infty$, using the whole sample $\{(X_{t_j},Y_{t_j})\}_{j=0}^{[n T_n]}$ may introduce too much error disrupting the Cauchy approximation; this will be seen from the proof of Theorem \ref{hm:thm_CQLA} below. 
Depending on the situation, we will need either to thin the sample or to control the speed of $T_n\to\infty$.

We consider the (possibly) partial observations over the part $[0,B_n]$ of the entire period $[0, T_n]$, where $(B_n)$ is a positive sequence such that
\begin{equation}
B_n\le T_n, 
\qquad n^{\ep''} \lesssim B_n \lesssim n^{1-\ep'}
\label{hm:B_order}
\end{equation}
for some $\ep',\ep''\in(0,1)$.
We write the corresponding number of time points used as
\begin{equation}
N_n = [n B_n].
\label{hm:N_def}
\end{equation}
Lemma \ref{hm:lem_loc.lim.thm} below implies that we cannot always take the whole sample (namely $B_n=T_n$) to manage the error of the local-Cauchy approximation.
In other words, when one uses the whole sample for estimating $(\mu,\sig)$, the condition \eqref{hm:B_order} restricts the speed of $T_n\to\infty$ as $T_n=O(n^{1-\ep'})$.

Write $a=(\mu,\sig)$, $a_0=(\mu_0,\sig_0)$, and $\D_j\xi = \xi_{t_j} - \xi_{t_{j-1}}$ for any process $\xi$.
Let
\begin{equation}
\ep_j(a) := \frac{\D_j Y - \mu\cdot \D_j X}{h\sig}.
\nonumber
\end{equation}
\hmrev{
Let $\phi_1(y):=\pi^{-1}(1+y^2)^{-2}$, the standard Cauchy density.
}
Then, we introduce the Cauchy quasi-(log-)likelihood conditional on $X$:
\begin{align}
\mbbh_{1,n}(a) 
&:= \sum_{j=1}^{N_n} \log\left\{\frac{1}{h\sig}\phi_1\left( \frac{\D_j Y - \mu\cdot \D_j X}{h\sig} \right)\right\}
\nn\\
&= C_n - \sum_{j=1}^{N_n} \left\{ \log\sig + \log\left(1+\ep_j(a)^2\right)\right\},
\nonumber
\end{align}
where the term $C_n$ does not depend on $a$.
We define the Cauchy quasi-maximum likelihood estimator (CQMLE for short) by any element
\begin{equation}
\hat{a}_n:=(\mes,\ses)\in\argmax_{a\in \overline{\Theta_\mu\times\Theta_\sig}}\mbbh_{1,n}(a).
\label{hm:def_CQMLE}
\end{equation}

We introduce the following regularity conditions on $X$.

\begin{assump}
\label{hm:A_X}
For $X$, we can associate an $(\mcf_t)$-adapted process $t\mapsto X'_t$ in $\mbbr^q$ having {\cadlag} 
sample paths, for which the following conditions hold for every $K>0$:
\begin{enumerate}
\item $\ds{\sup_{t\ge 0}\E\big[|X'_t|^K\big]<\infty}$.
\item There exists a constant $c_X>0$ such that both
\begin{equation}
\sup_{n} \max_{j=1,\dots, [n T_n]} \E\left[\left|\frac{1}{h^{c_X}}\left\{\frac1h \left(\D_j X  - h X'_{t_{j-1}}\right)\right\} \right|^K\right] < \infty
\label{hm:A_X-1}
\end{equation}
and
\begin{equation}
\sup_{n} \max_{j=1,\dots, [n T_n]} \sup_{t_{j-1}\le t\le t_j}\E\left[\left|\frac{1}{h^{c_X}} (X_t' - X'_{t_{j-1}}) \right|^K\right] < \infty
\label{hm:A_X-2}
\end{equation}
hold.
Moreover,
\begin{equation}
\max_{j=1,\dots, [n T_n]} \E\left[\left|
\sqrt{N_n}\E\left.\left[\left\{\frac1h \left(\D_j X  - h X'_{t_{j-1}}\right)\right\}
\,\frac{\p\phi_1}{\phi_1}\left(\frac{\D_j J}{h}\right)\right|\mcf_{t_{j-1}}\right]
\right|^K\right] =o(1).
\nonumber
\end{equation}
%
%

\item 
\hmrev{
There exist a constant $c_X'>0$ and a probability measure $\pi_0(dx)$ on some $q'$-dimensional Borel space $(\mbbr^{q'},\mcb^{q'})$ such that, for every real-valued $\mcc^1(\mbbr^q)$-function $f(x)$ satisfying $\max_{k\in\{0,1\}}|\p_x^k f(x)| \lesssim 1 + |x|^C$, we can associate a measurable function $\psi_f(z)$ such that
\begin{equation}
\sup_{n} \E\left[\left| N_n^{c_X'}\left(\frac{1}{N_n} 
\sum_{j=1}^{N_n} f(X'_{t_{j-1}}) - \int \psi_f (z)\pi_0(dz)\right)\right|^K \right] < \infty
\label{hm:A_X-3}
\end{equation}
for every $K>0$.
}

\item 
\hmrev{
The $q\times q$ matrix $\ds{S_0 := \int \psi_f (z)\pi_0(dz)}$ for $f(x')=x^{\prime \,\otimes 2}$ is symmetric, positive-definite, and finite.
}

\end{enumerate}
\end{assump}

\medskip

\hmrev{
Assumption \ref{hm:A_X} is designed not only for convergence in distribution of our estimators but also for ensuring the convergence of their moments: 
in the proofs, items 1 and 2 are used for a series of moment estimates, item 3 (together with the Sobolev inequality) is required to handle the uniform-in-parameter moment estimates and to identify the limit of the law of large numbers under the 
ergodicity, and item 4 will ensure the positive definiteness of the asymptotic covariance matrix of the estimator.
It should be noted that the moment conditions in Assumption \ref{hm:A_X} can be too much to ask if one is only interested in deriving the asymptotic normality; see Theorems \ref{hm:thm_CQMLE.AN} and \ref{hm:thm_CQMLE.AN-SDE}.
}

\hmrev{
The last convergence in Assumption \ref{hm:A_X}.2 is trivial if $X'$ and $J$ are independent so that the conditional expectation involved therein vanishes a.s.
Although we did not explicitly mention, one may think of $X_t=\int_0^t X'_s ds$ and where the integrand $X'$ admits a unique invariant distribution $\pi_0$ for which $X_t' \cil \pi_0$ as $t\to\infty$ and $S_0 = \int f(x',a)\pi_0(dx')$ (in this case, $q=q'$ and $\psi_f = f$);} 
then, since $\int f(x')\pi_0(dx') \le \sup_{t\ge 0}\E[f(X'_t)]$ for any continuous nonnegative $f$, we have $\int |x'|^K\pi_0(dx')<\infty$ for each $K>0$ hence in particular $S_0$ is finite.
\hmrev{
See Section \ref{hm:sec_cov.proc} for more related discussions; 
as we will see there, the dimension $q'$ in Assumption \ref{hm:A_X}.3 may differ from $q$.
}

Let
\begin{align}
\D_{a,n} &:= \frac{1}{\sqrt{N_n}} \p_a\mbbh_{1,n}(a_0), \nn\\
\Gam_{a,0} &:= \diag\left(
\frac{1}{2\sig_0^2} \,S_0, ~\frac{1}{2\sig_0^2}
\right).
\label{hm:def_Gam_a}
\end{align}
\hmrrev{
Let
\begin{equation}
\zeta(y,x';\,a)
:= \log\left\{\frac{\sig_0}{\sig}\phi_1\left( \frac{\sig_0}{\sig}\left(y - \frac{1}{\sig_0}(\mu-\mu_0)\cdot x'\right) \right)\right\}
- \log\phi_1(y),
\label{hm:zeta_def}
\end{equation}
and
\begin{equation}
f'(x';a) := \int \zeta(y, x';a) \phi_1(y)dy,
\nonumber
\end{equation}
which is smooth in $a\in\overline{\Theta_\mu\times\Theta_\sig}$.
Then, under Assumption \ref{hm:A_X}.3, for each $a$ we can associate measurable functions 
$\psi_{f'}(z;a)$ and $\psi_{\p_a f'}(z;a)$ and a probability measure $\pi_0(dz)$, for which
\begin{equation}
\frac{1}{N_n}\sum_{j=1}^{N_n} \p_a^k f'(X'_{t_{j-1}};a) \cip \int \psi_{\p_a^k f'(\cdot;a)} (z;a)\pi_0(dz),\qquad k=0,1.
\nn
\end{equation}
We write
\begin{equation}
\mbby_1(a)
= \int \psi_{f'(\cdot;a)} (z;a)\pi_0(dz).
\label{hm:def_new.Y1}
\end{equation}
}
The main result of this section is the following.

\begin{thm}
\label{hm:thm_CQLA}
\hmrrev{
Suppose that \eqref{hm:B_order} and Assumption \ref{hm:A_X} hold, that $a\mapsto \mbby_1(a)$ is $\mcc^1$-class with $\p_a\mbby_1(a)=\int \psi_{\p_a f'(\cdot;a)} (z;a)\pi_0(dz)$, and that there exists a constant $c_0>0$ for which
\begin{equation}
\label{hm:QLA.thm1-4}
\mbby_{1}(a) \le -c_{0} |a-a_0|^{2},\qquad a\in\Theta_\mu\times\Theta_\sig.
\end{equation}
} 
Then, we have the asymptotic normality 
\begin{equation}
\hat{u}_{a,n} := \sqrt{N_n}(\hat{a}_n-a_0) = \Gam_{a,0}^{-1}\D_{a,n} + o_p(1) \cil N\left(0,\,\Gam_{a,0}^{-1}\right)
\label{hm:CQMLE.an}
\end{equation}
and the polynomial-type tail-probability estimate:
\begin{equation}
\forall L>0,\quad 
\sup_{r>0}\sup_n \pr\left[|\hat{u}_{a,n}| > r \right] r^L < \infty.
\label{hm:CQMLE.tpe}
\end{equation}
\end{thm}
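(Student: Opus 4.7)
The proof will follow the Yoshida-type quasi-likelihood analysis (QLA) framework: the asymptotic normality \eqref{hm:CQMLE.an} and the polynomial-type tail-probability estimate \eqref{hm:CQMLE.tpe} are obtained simultaneously from (i) convergence in distribution of the normalized quasi-score $\D_{a,n}$, (ii) uniform-in-$a$ convergence at a polynomial rate of $N_n^{-1}\{\mbbh_{1,n}(a)-\mbbh_{1,n}(a_0)\}$ to $\mbby_1(a)$ together with convergence of the normalized observed information to $\Gam_{a,0}$, and (iii) the identifiability bound \eqref{hm:QLA.thm1-4}. The distinctive feature is that $\mbbh_{1,n}$ is built from a \emph{misspecified} Cauchy conditional density, so each ingredient must absorb the small-time Cauchy-approximation error for $\mcl(h^{-1}J_h)$; the $L^1$-local-limit control provided by Lemma \ref{hm:lem_loc.lim.thm} is used throughout to do so.

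First I would reduce everything to the standardized small-time driver $h^{-1}\D_j J$: from \eqref{hm:model} one has $\ep_j(a_0)=h^{-1}\D_j J$, while the partial derivatives $\p_a\ep_j(a)|_{a=a_0}$ produce smooth functions of $\D_j X/(h\sig_0)$, for which Assumption \ref{hm:A_X}.2 allows the replacement $h^{-1}\D_j X \approx X'_{t_{j-1}}$ up to errors of positive polynomial order in $h$. Combined with \eqref{hm:Cauchy.approx_CF} and Lemma \ref{hm:lem_loc.lim.thm}, each conditional expectation $\E[g(h^{-1}\D_j J)\mid\mcf_{t_{j-1}}]$, for the bounded smooth functions $g$ appearing in the Cauchy score and Hessian, may be replaced by $\int g(y)\phi_1(y)dy$ with cumulative error $o(\sqrt{N_n})$ over $j=1,\dots,N_n$, precisely under the sampling balance $B_n\lesssim n^{1-\ep'}$ in \eqref{hm:B_order}.

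I would then establish $\D_{a,n}\cil N(0,\Gam_{a,0})$ by the martingale CLT applied to the approximate martingale-difference array built from the above representation of the score. The quadratic variation is identified through the standard Cauchy Fisher-information identities together with Assumption \ref{hm:A_X}.3 (applied with $f(x')=x^{\prime\otimes 2}$ to yield the matrix $S_0$), and the asymptotic orthogonality condition in Assumption \ref{hm:A_X}.2 eliminates the bias stemming from the $\D_j X - hX'_{t_{j-1}}$ term. In parallel, uniform-in-$a$ convergence of $N_n^{-1}\{\mbbh_{1,n}(a)-\mbbh_{1,n}(a_0)\}$ to $\mbby_1(a)$ at a polynomial rate, together with $-N_n^{-1}\p_a^2\mbbh_{1,n}(a_0)\cip\Gam_{a,0}$, follows from Assumption \ref{hm:A_X}.1--3 and a Sobolev inequality argument. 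Standard one-step linearization around $a_0$, leveraging positive-definiteness of $\Gam_{a,0}$, then delivers the stochastic expansion \eqref{hm:CQMLE.an}.

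Finally, \eqref{hm:CQMLE.tpe} is obtained by invoking Yoshida's polynomial-type large-deviation (PLD) theorem for the rescaled random field $u\mapsto\mbbh_{1,n}(a_0+u/\sqrt{N_n})-\mbbh_{1,n}(a_0)$: its hypotheses are exactly the uniform moment bounds and polynomial-rate convergence established above, combined with the global quadratic separation \eqref{hm:QLA.thm1-4}, and they deliver both \eqref{hm:CQMLE.tpe} and the convergence of moments of $\hat u_{a,n}$ used later. The main obstacle throughout is controlling the cumulative Cauchy-approximation error: a single-step error of order $h^{c}$ for some $c>0$ from Lemma \ref{hm:lem_loc.lim.thm} amplifies to $N_n h^{c}\sim n^{1-c}B_n$ after summation over the $N_n$ increments, and only the bandwidth restriction $B_n\lesssim n^{1-\ep'}$ keeps this genuinely negligible against the $\sqrt{N_n}$ rate required by the CLT. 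Once this accounting is handled carefully, the remainder reduces to a standard QLA application to a smooth, well-identified contrast.
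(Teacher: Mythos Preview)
Your proposal is correct and follows essentially the same route as the paper: decompose the quasi-likelihood via the approximation $h^{-1}\D_j X\approx X'_{t_{j-1}}$ from Assumption~\ref{hm:A_X}.2, control the Cauchy-approximation error through Lemma~\ref{hm:lem_loc.lim.thm}, establish the martingale CLT for $\D_{a,n}$ and the uniform polynomial-rate convergence of $\mbby_{1,n}$ to $\mbby_1$ via the Sobolev inequality and Assumption~\ref{hm:A_X}.3, and then invoke \cite[Theorem~3(c)]{Yos11} together with \eqref{hm:QLA.thm1-4} to conclude both \eqref{hm:CQMLE.an} and \eqref{hm:CQMLE.tpe}. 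The only minor imprecision is that Lemma~\ref{hm:lem_loc.lim.thm} actually gives an error of order $h^{1-r}$ for arbitrarily small $r>0$ (not merely ``some $c>0$''), which is exactly what makes $\sqrt{N_n}\cdot h^{1-r}\to 0$ under $B_n\lesssim n^{1-\ep'}$; your accounting is otherwise consistent with this.
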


\hmrrev{
If $q'=q$, $\pi_0$ is the invariant distribution of $X'$, and if Assumption \ref{hm:A_X}.3 and \ref{hm:A_X}.4 hold with $\psi_f=f$, then 
\begin{equation}
\mbby_1(a) = \int \int \zeta(y, x';a) \phi_1(y)dy \,\pi_0(dx').
\nonumber
\end{equation}
In this case, the identifiability condition \eqref{hm:QLA.thm1-4} is automatic.
Indeed, the well-known property of the Kullback-Leibler divergence, we have $\mbby_1(a)\le 0$ with the equality holding if and only if $a=a_0$. Moreover, the above $\mbby_1(a)$ is smooth, $\p_a\mbby_1(a_0)=0$, and $-\p_a^2\mbby_1(a_0)$ is positive definite under the present assumptions (including the positive definiteness of $S_0 = \int x^{\prime \otimes 2}\pi_0(dx')$). These observations conclude \eqref{hm:QLA.thm1-4}.
}

It is easy to construct a consistent estimator of $S_0$: see \eqref{hm:S-hat} in Section \ref{hm:sec_jan}.
We have the variance-stabilizing transform for $\sig$: $\sqrt{N_n /2}(\log\ses - \log\sig_0) \cil N_1(0,1)$.

\medskip

\begin{rem}\normalfont
Making use of sample in over $[0,B_n] \subset [0,T_n]$ is not essential.
It is possible to use, for example, only sample over $\bigcup_{k\ge 1}[2k-2,2k-1)$.
Also, we could consider the case where $B_n\equiv B$ for some \textit{fixed} $B>0$, meaning that for estimating $(\mu,\sig)$ we only use a sample over the fixed period $[0, B]$. In that case, if $X$ is truly random, under suitable conditions on the underlying filtration $(\mcf_t)$, the asymptotic distribution of $\sqrt{n}(\hat{a}_n -a_0)$ is mixed-normal with the random asymptotic covariance depending on a sample path $(X_t)_{t\in[0, B]}$.
See also Remark \ref{hm:rem_AMN}, \cite{CleGlo19}, \cite{CleGlo20}, and \cite{Mas19spa} for related results.
\end{rem}

\begin{rem}\normalfont
Since high-frequency data over each fixed period is enough to consistently estimate $(\mu,\sig)$, 
one may think of a parameter-varying (randomized parameter) setting:
if $\mu$ and $\sig$ may vary along $i$, say $i\mapsto (\mu_i,\sig_i)$ for the $i$th period $[i-1,i)$ and if they are all to be estimated from $\{(X_{t_j},Y_{t_j})\}_{j\ge 1:\, jh\in [i-1,i)}$, then the model setup invokes the classical Neyman-Scott problem \cite{NeySco48};
if $\{(\mu_i,\sig_i)\}_{i\ge 1}$ is an unobserved i.i.d. sequence of random vectors, then the model becomes a (partially) random-effect one.
The latter would be of interest in the context of the population approach.
\end{rem}

\begin{rem}[Least-squares estimator]\normalfont
\label{hm:rem_LSE}
Let $\nu_0>2$. We can rewrite \eqref{hm:model} as
\begin{equation}
Y_t = X_t \cdot \mu + \sig' Z_t
\nonumber
\end{equation}
with $\sig':=\sig/\sqrt{\nu-2}$ and the {\lp} $Z_t:=\sqrt{\nu-2}\,J_t$ such that $\E[Z_1]=0$ and $\var[Z_1]=1$.
In this case, under suitable conditions, we can apply the least-squares method for estimating $(\mu,\sig')$ and deduce its asymptotic normality at rate $\sqrt{T_n}$.
\end{rem}

\begin{rem}\normalfont
The claims in Theorem \ref{hm:thm_CQLA} hold for any local-Cauchy $J$ satisfying the local limit theorem Lemma \ref{hm:lem_loc.lim.thm}.
In particular, the model \eqref{hm:model} with $J$ replaced by the generalized hyperbolic {\lp} (except for the variance gamma one) could be handled analogously; see \cite[Section 2.1.2]{Mas19spa} for related information.
\end{rem}

\subsection{Covariate process}
\label{hm:sec_cov.proc}

We set Assumption \ref{hm:A_X} without imposing a concrete structure of $X$.
To provide a set of more handy conditions on $X$ and $X'$, it is necessary to impose more specific structures on them.
\hmrrev{Below, we will present such an example in detail.}

\hmrrev{
\subsubsection{Setup}
}

Let us briefly discuss how to verify Assumption \ref{hm:A_X} for $X_t=(X_{1,t},X_{2,t})\in\mbbr^{q_1} \times \mbbr^{q_2}$ ($q=q_1+q_2$) of the form
\begin{equation}
t \mapsto X_{k,t} = \int_0^t X'_{k,s} ds,\qquad k=1,2,
\label{hm:X-ex-form}
\end{equation}
where $t\mapsto (X'_{1,t},X'_{2,t})$ is Riemann-integrable process independent of the Student-$t$ {\lp} $J$.
\hmrrev{Further, we assume the following conditions.}
\begin{itemize}
\item $X'_{1}$ is a non-random periodic $\mcc^1$-function with possibly unknown period $\tau>0$, and satisfies that
\begin{equation}
\sup_{t\ge 0}(|X_{1,t}|+|\p_t X_{1,t}|)<\infty.
\nonumber
\end{equation}
For example, $X'_1$ can be the derivative of 
\begin{equation}
X_{1,t}=\big(\cos(\tau_1 t),\sin(\tau_1 t),\dots,\cos(\tau_M t),\sin(\tau_M t)\big) 
\nn
\end{equation}
with $\tau_1,\dots,\tau_M$ being unknown different rational numbers ($q_1=2M$); we may get rid of some of the components from the beginning, such as $X_{1,t}=(\cos(5t),\sin(t))$.
\item $X'_{2}$ is a $q_2$-dimensional diffusion process:
\begin{equation}
dX'_{2,t} = A(X'_{2,t})dw_t + B(X'_{2,t})dt,
\label{hm:ex_X2}
\end{equation}
where $w$ is a standard {\wp} (independent of $J$).
The unknown coefficients $A$ and $B$ are globally Lipschitz so that \eqref{hm:ex_X2} admits a unique strong solution.
Moreover, $X'_2$ is 
exponentially ergodic in the sense that there exist some $\kappa>0$ and an invariant measure $\pi_{0}$ on $(\mbbr^{q_2},\mcb^{q_2})$ for which
\begin{align}
& \|P_t(x,\cdot) - \pi_0(dz)\|_g
\nn\\
&:=\sup_{t\ge 0} \sup_{f:\,|f|\le g} \left| \int f(z) P_t(x,dz) - \int f(z) \pi_{0}(dz)\right| \lesssim g(x)e^{-\kappa t}
\label{hm:rev1-2}
\end{align}
for any $g$ of at most polynomial growth.
\end{itemize}
Our model may be used for analyzing time series data observed at high frequency exhibiting a seasonality \cite{ProPed22}.
Concerned with the exponential ergodicity \eqref{hm:rev1-2} of $X'_2$, we refer to \cite{Kul18} and the references therein for several easy-to-verify sufficient conditions.

With the above setup, we will verify Assumption \ref{hm:A_X} along with imposing additional conditions.

\hmrrev{
\subsubsection{Verification}
}

First, for Assumption \ref{hm:A_X}.1 and \ref{hm:A_X}.2, we can look at $X_1$ and $X_2$ separately. 
The conditions obviously hold for $X_1$.
As for $X_2$ of \eqref{hm:ex_X2}, 
Assumption \ref{hm:A_X}.1 can be verified through the easy-to-apply Lyapunov-function criteria; we do not list them here, but just refer to \cite[Theorem 2.2]{Mas07}, \cite[Lemma 3.3]{Kul09}, and also \cite[Section 5]{Mas13as}.
Let us suppose Assumption \ref{hm:A_X}.1 in what follows.
Then, we can verify all the conditions in Assumption \ref{hm:A_X}.2 with $c_X=1/2$ as follows.
Since the sequence ($j=1,\dots,n$)
\begin{equation}
\frac{1}{\sqrt{h}} (X'_{2,t} - X'_{2,t_{j-1}})
= \frac{1}{\sqrt{h}}\int_{t_{j-1}}^{t}A(X'_{2,s})dw_s + \sqrt{h} \,\frac{1}{h}\int_{t_{j-1}}^{t}B(X'_{2,s})ds
\nonumber
\end{equation}
is $L^K(\pr)$-bounded for any $K>0$, so is 
\begin{equation}
\frac{1}{h\sqrt{h}} \left(\D_j X_2  - h X'_{2,t_{j-1}}\right)
= \frac1h \int_{t_{j-1}}^{t_j} \frac{1}{\sqrt{h}}(X'_{2,s} - X'_{2,t_{j-1}})ds,
\label{hm:add-eq1}
\end{equation}
followed by the first two conditions in Assumption \ref{hm:A_X}.2 with $c_X=1/2$.
\hmrev{
As for the last one, 
we note the decomposition
\begin{align}
& \frac{1}{h} \left(\D_j X_2  - h X'_{2,t_{j-1}}\right)
\nn\\
&= A(X'_{2,t_{j-1}}) \frac1h \int_{t_{j-1}}^{t_j}(w_s-w_{t_{j-1}})ds 
+ \bigg\{\frac1h \int_{t_{j-1}}^{t_j}\int_{t_{j-1}}^s B(X'_{2,u})du\,ds \nn\\
&{}\qquad 
+ \frac1h \int_{t_{j-1}}^{t_j}\int_{t_{j-1}}^s \big( A(X'_{2,u})-A(X'_{2,t_{j-1}})\big) dw_u\,ds\bigg\}\\
&\yurev{=:T_{1,h}+T_{2,h}.}
\label{hm:add-eq2}
\end{align}
Since $\sup_y |((\p\phi_1)/\phi_1)(y)| < \infty$ and 
$\E[((\p\phi_1)/\phi_1)(h^{-1}\D_j J)|\mcf_{t_{j-1}}]=0$ a.s., we have
\begin{align}
& \left|
\sqrt{N_n}\E\left.\left[\left\{\frac1h \left(\D_j X  - h X'_{t_{j-1}}\right)\right\}
\,\frac{\p\phi_1}{\phi_1}\left(\frac{\D_j J}{h}\right)\right|\mcf_{t_{j-1}}\right]
\right|
\nn\\
&\lesssim 
h\sqrt{N_n}\,\E\left.\left[\frac1h \big|\yurev{T_{2,h}}\big|\right|\mcf_{t_{j-1}}\right]
\nn\\
&{}\qquad + 
\left|
\sqrt{N_n}
A(X'_{2,t_{j-1}}) 
\E\left.\left[ \frac1h \int_{t_{j-1}}^{t_j}(w_s-w_{t_{j-1}})ds
\,\frac{\p\phi_1}{\phi_1}\left(\frac{\D_j J}{h}\right)\right|\mcf_{t_{j-1}}\right]
\right|.
\nonumber
\end{align}
The second term in the upper bound a.s. vanishes since $w$ and $J$ are mutually independent. 
Under the linear-growth property of the coefficients, by the standard estimates as before 
the sequence ``$\E[h^{-1}|\yurev{T_{2,h}}|\,|\,\mcf_{t_{j-1}}]$'' is $L^K(\pr)$-bounded for any $K>0$.
Since $h\sqrt{N_n}\lesssim n^{-\ep'/2}\to 0$ by \eqref{hm:B_order}, the last convergence in Assumption \ref{hm:A_X}.2 follows.
}
\hmrrev{Thus, we have verified Assumption \ref{hm:A_X}.1 and \ref{hm:A_X}.2.}

\hmrev{
Turning to Assumption \ref{hm:A_X}.3, we take $c'_X>0$ so small that $N_n^{\yurev{ c_X'}}\lesssim h^{-c_X}=h^{-1/2}$.
To handle the periodic nature of $X'_1$, we note that \eqref{hm:A_X-3} is derived by showing that
\begin{equation}
\E\left[\left|N_n^{c'_X}\left(\frac{1}{hN_n} \int_0^{hN_n} f(X'_t) dt - 
\int \psi_f (z)\pi_0(dz)\right)\right|^K \right] \lesssim 1
\label{hm:rev1-1}
\end{equation}
for suitable $\psi_f (z)$, since Jensen's and Cauchy-Schwarz inequalities give
\begin{align}
& \E\left[ \left|N_n^{c'_X}
\left(\frac{1}{hN_n} \int_0^{hN_n} f(X'_t) dt - \frac{1}{N_n} \sum_{j=1}^{N_n} f(X'_{t_{j-1}})\right)
\right| ^K\right]
\nn\\
&\le \E\left[\left|N_n^{c'_X}
\left(
\frac{1}{N_n} \sum_{j=1}^{N_n} \frac1h \int_{(j-1)/n}^{j/n}\big(f(X'_t) - f(X'_{t_{j-1}})\big) dt
\right)
\right| ^K\right]
\nn\\
&\lesssim 
\frac{1}{N_n} \sum_{j=1}^{N_n} \frac1h \int_{(j-1)/n}^{j/n}
\E\left[(1+|X'_t|^C+|X'_{t_{j-1}}|^C) \big(N_n^{c'_X}|X'_t - X'_{t_{j-1}}|\big)^K \right]dt
\nn\\
&\lesssim \frac{1}{N_n}\sumjj \frac1h \int_{t_{j-1}}^{t_j} \sup_t \E[ 1 + |X'_t|^C]^{1/2} \E\left[\big|N_n^{\yurev{ c'_X}}(X'_t - X'_{t_{j-1}})\big|^{2K}\right]^{1/2}dt
\nn\\
&\lesssim \max_{j=1,\dots, [n T_n]} \sup_{t_{j-1}\le t\le t_j}\E\left[|h^{-1/2}(X'_t - X'_{t_{j-1}})|^{2K}\right]^{1/2} 
\lesssim 1.
\label{hm:estimates+2}
\end{align}
The last estimate is due to the already verified Assumption \ref{hm:A_X}.2.
}
Let \yurev{$m_n=\max\{ i\ge 1 :\, i\tau\le hN_n\}=[hN_n/\tau]$}.
Then, $\yurev{m_n}\to\infty$ as $\yurev{m_n}\gtrsim n^{\ep''}$ by \eqref{hm:B_order}, and we have
\begin{equation}
\frac{1}{h N_n} \int_0^{h N_n} f(X'_t) dt = \frac{1}{h N_n} \int_0^{\tau \yurev{m_n}} f(X'_t) dt + \frac{1}{h N_n} \int_{\tau \yurev{m_n}}^{hN_n} f(X'_t) dt.
\nonumber
\end{equation}
The second term on the right-hand side can be bounded by a constant multiple of $(hN_n)^{-1}(1+|X'_t|^C) \lesssim n^{-\ep''}(1+|X'_t|^C)$. 
\hmrev{
Further letting $c'_X>0$ sufficiently small so that $n^{-\ep''} N_n^{c'_X}\lesssim 1$ if necessary,} 
we only need to look at the first term in the last display, say $V_n$.
By the periodicity of $X'_1$, we have
\begin{align}
V_n &= \frac{1}{h N_n} \sum_{m=1}^{\yurev{m_n}} \int_{(m-1)\tau}^{m\tau} f(X'_t) dt
\nn\\
&= \frac{\tau \yurev{m_n}}{h N_n} \frac{1}{\yurev{m_n}}\sum_{m=1}^{\yurev{m_n}} \frac{1}{\tau}\int_{0}^{\tau} f\left( X'_{1,(m-1)\tau + t},X'_{2,(m-1)\tau + t}\right) dt
\nn\\
&= \frac{\tau \yurev{m_n}}{h N_n} \frac{1}{\yurev{m_n}}\sum_{m=1}^{\yurev{m_n}} \frac{1}{\tau}\int_{0}^{\tau} f\left( X'_{1,t},X'_{2,(m-1)\tau + t}\right) dt
\nn\\
&=: \frac{\tau \yurev{m_n}}{h N_n} \, \frac{1}{\yurev{m_n}}\sum_{m=1}^{\yurev{m_n}} G_{f,m},\qquad\text{say.}
\nonumber
\end{align}
\hmrev{
Now we set
\begin{equation}
\psi_f(z) = \frac{1}{\tau}\int_{0}^{\tau} f( X'_{1,t}, z)dt.
\nonumber
\end{equation}
Then,
\begin{align}
& V_n - \int \psi_f(z) \pi_0(dz)
\nn\\
&= \left(\frac{\tau \yurev{m_n}}{h N_n} - 1\right) \, \frac{1}{\yurev{m_n}}\sum_{m=1}^{\yurev{m_n}} G_{f,m}
+\frac{1}{\yurev{m_n}}\sum_{m=1}^{\yurev{m_n}} \left(G_{f,m} - \E[G_{f,m}]\right)
\nn\\
&{}\qquad +\frac{1}{\yurev{m_n}}\sum_{m=1}^{\yurev{m_n}} \left(\E[G_{f,m}] - \int \psi_f(z) \pi_0(dz)\right)
\nn\\
&=: \overline{V}_{1,n} + \overline{V}_{2,n} + \overline{V}_{3,n},\qquad\text{say.}
\nonumber
\end{align}
Fix any $K\ge 2$.
Since $\sup_{m\ge 1}\E[|G_{f,m}|^K]<\infty$, $|\frac{\tau \yurev{m_n}}{h N_n}-1|=O((h N_n)^{-1})=O(n^{-\ep''})$ and $n^{-\ep''} N_n^{c'_X}\lesssim 1$, we get $\sup_n \E[|N_n^{c'_X}\overline{V}_{1,n}|^K]<\infty$.
We can handle the remaining two terms by the mixing property:
the condition \eqref{hm:rev1-2} implies that $X'_2$ is exponentially $\beta$-mixing (see \cite[Lemma 3.9]{Mas07}), hence also exponentially $\al$-mixing.
Since $G_{f,m}$ is $\sig(X'_{2,s}:\, s\in[(m-1)\tau,m\tau])$-measurable, there exists a constant $\mathfrak{c}_l=\mathfrak{c}_l(\tau)>0$ ($l=1,2$) for which
\begin{equation}
\al(k) := \sup_{i\ge 1}\sup_{A\in\sig(G_{f,l}:\,l\le i),\atop B\in\sig(G_{f,l}:\,l\ge i+k)}
\left|\pr[A\cap B]-\pr[A]\pr[B]\right| \le \mathfrak{c}_2 \exp(-\mathfrak{c}_1 k),\qquad k\in\mbbn.
\nonumber
\end{equation}
Then, a direct application of \cite[Lemma 4]{Yos11} yields $\sup_n \E[|N_n^{c'_X}\overline{V}_{2,n}|^K]<\infty$.
The remaining $\overline{V}_{3,n}$ can be treated similarly to the proof of \cite[Lemma 4.3]{Mas13as} as follows (see how to estimate $\Lam''_n(f;\theta)$ therein):
letting $g(z):=\sup_{|x'_1| \le \|X'_{1,\cdot}\|_\infty}|f(x'_1,z)|$ (hence $g(z)\lesssim 1+|z|^C$), we obtain from \eqref{hm:rev1-2} that
\begin{align}
& \left|\E[G_{f,m}] - \int \psi_f(z) \pi_0(dz)\right|
\nn\\
&= \left|\frac{1}{\tau}\int_{0}^{\tau} \iint f(X'_{1,t}, z)\,\left\{P_{(m-1)\tau+t}(x,dz) - \pi_0(dz)\right\} \pr^{X'_{2,0}}(dx) dt\right|
\nn\\
&\le \frac{1}{\tau}\int_{0}^{\tau} \int \left\| P_{(m-1)\tau+t}(x,\cdot) - \pi_0(\cdot)\right\|_g \pr^{X'_{2,0}}(dx) dt
\nn\\
&\lesssim \int g(x)\pr^{X'_{2,0}}(dx) \int_{0}^{\tau} \exp\left(-\kappa((m-1)\tau+t)\right)dt
\lesssim \exp\left(-\kappa\tau m\right),
\nonumber
\end{align}
\yurev{where $\pr^{X'_{2,0}}(\cdot)$ denotes the initial distribution of $X'_2$.}
It follows that $|N_n^{c'_X}\overline{V}_{3,n}| \lesssim N_n^{c'_X}M_n^{-1} \lesssim n^{-\ep''}N_n^{c'_X}\lesssim 1$, concluding Assumption \ref{hm:A_X}.3 (with $q'=q_2$).
}

Finally, it does not seem easy to give a general sufficient condition for the positive definiteness of $S_0$ in Assumption \ref{hm:A_X}.4, which is related to the positive definiteness at the true value $\theta_0$ of the Fisher-information matrix.


\section{Student quasi-likelihood analysis}
\label{hm:sec_student.LA}

Taking over the setting in Section \ref{hm:sec_Cauchy.QLA}, we now turn to the estimation of the degrees of freedom $\nu$ from $\{(X_{t_j},Y_{t_j})\}_{j=0}^{[nT_n]}$.
Suppose that Assumption \ref{hm:A_X} and \eqref{hm:B_order} hold, so that we have \eqref{hm:CQMLE.tpe} by Theorem \ref{hm:thm_CQLA}.
Throughout this section, we assume that $N_n$ is sufficiently large relative to $T_n$:
\begin{equation}
\frac{T_n}{N_n} = \frac{T_n}{[n B_n]} \to 0.
\label{T/N->0}
\end{equation}

\subsection{Construction and asymptotics}
\label{hm:sec_tQLA-add1}

Define the \textit{unit-time} residual sequence
\begin{equation}
\hep_i := \ses^{-1}\left( Y_i - Y_{i-1} - \mes\cdot(X_i - X_{i-1}) \right),
\nonumber
\end{equation}
for $i=1,\dots,[T_n]$.
Let
\begin{equation}
\ve_i:=J_i - J_{i-1}
\nonumber
\end{equation}
so that $\ve_1,\ve_2,\dots \sim \text{i.i.d.}~t_\nu$.
We will estimate $\nu$ based on the maximum-likelihood function as if $\hep_1,\dots,\hep_{[T_n]}$ are observed $t_\nu$-i.i.d. samples: we consider the explicit Student quasi-likelihood function
\begin{equation}
\mbbh_{2,n}(\nu) = \sumi \rho(\hep_i;\nu),
\label{hm:def_tQLF}
\end{equation}
where,
\hmrev{
with the notation \eqref{hm:def_f.nu.pdf},
\begin{equation}
\rho(\ve;\nu) := \log f(\ve;0,1,\nu)
=\log\left(
\frac{\Gamma(\frac{\nu+1}{2})}{\sqrt{\pi}\Gamma(\frac{\nu}{2})}
\left(1+\ve^2\right)^{-(\nu+1)/2}
\right).
\nonumber
\end{equation}
}
Then, we define the Student-$t$ quasi-likelihood estimator ($t$-QMLE for short) of $\nu$ by any element
\begin{equation}
\nes\in\argmax_{\nu\in\overline{\Theta}_\nu}\mbbh_{2,n}(\nu).
\nonumber
\end{equation}
We have
\begin{equation}
\mbbh_{2,n}(\nu) = 
\sumi\left( -\frac12 \log\pi 
+ \log\Gam\left(\frac{\nu+1}{2}\right) - \log\Gam\left(\frac{\nu}{2}\right)
- \frac{\nu+1}{2}\log\left( 1+ \hep_i^2\right)
\right).
\nonumber
\end{equation}
Let $\psi(x):=\p\log\Gam(x)$, the digamma function, and then $\psi_1:=\p\psi$, the trigamma function. 
By the integral representation $\p^m\psi(x)=(-1)^{m+1}\int_0^\infty s^{m}e^{-xs}(1-e^{-s})^{-1}ds$ for $m\in\mbbn$ (see \cite[6.4.1]{AbrSte92}), hence $\p\psi_1(x)<0$ for $x>0$.
From this fact and the last expression for $\mbbh_{2,n}(\nu)$, we obtain
\begin{equation}
- \p_\nu^2 \mbbh_{2,n}(\nu) = \frac{[T_n]}{4}\left(
\psi_1\left(\frac{\nu}{2}\right) - \psi_1\left(\frac{\nu+1}{2}\right)
\right) > 0
\label{hm:ep.hat-ep}
\end{equation}
for any $\nu>0$, hence $-\mbbh_{2,n}$ is a.s. convex on $(0,\infty)$.

Let
\begin{align}
\D_{\nu,n} &:= 
\frac{1}{\sqrt{T_n}} \sumi \p_\nu\rho(\ve_i;\nu_0),
\nn\\
\Gam_{\nu,0} &:= \frac{1}{4}\left(
\psi_1\left(\frac{\nu_0}{2}\right) - \psi_1\left(\frac{\nu_0+1}{2}\right)
\right).
\label{hm:def_Gam_nu}
\end{align}
We have $\Gam_{\nu,0}>0$ by \eqref{hm:ep.hat-ep}.
The next result shows that we can estimate $\nu_0$ directly as if we observe $(\ve_i)$.

\medskip

\begin{thm}
\label{hm:thm_tQLA}
Under 
\hmrrev{
the assumptions in Theorem \ref{hm:thm_CQLA} 
}
and \eqref{T/N->0}, we have the polynomial-type tail-probability estimate:
\begin{equation}
\forall L>0,\quad 
\sup_{r>0}\sup_n \pr\left[|\hat{u}_{\nu,n}| > r \right] r^L < \infty,
\label{hm:tQMLE.tpe}
\end{equation}
and the asymptotic normality:
\begin{equation}
\hat{u}_{\nu,n} := \sqrt{T_n}(\nes-\nu_0) = \Gam_{\nu,0}^{-1}\D_{\nu,n} + o_p(1) \cil N\left(0,\,\Gam_{\nu,0}^{-1}\right).
\label{hm:tQMLE.an}
\end{equation}
\end{thm}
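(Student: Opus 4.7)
\emph{Strategy.} The plan is to reduce to an ``oracle'' problem in which the true unit-time increments $\ve_i = J_i - J_{i-1}$ were observed, and then to control the plug-in error arising from replacing $\ve_i$ by $\hep_i$. A useful structural observation is that $\p_\nu^2 \rho(\ve;\nu) = \tfrac14[\psi_1((\nu+1)/2) - \psi_1(\nu/2)]$ does not depend on $\ve$, so $\p_\nu^2 \mbbh_{2,n}(\nu)$ is the same deterministic function of $\nu$, strictly negative and continuous, whether evaluated on $\hep_i$ or on $\ve_i$. This eliminates the usual Hessian-convergence step and reduces everything to score analysis plus consistency.

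\emph{Oracle score.} For $\D_{\nu,n} = \tfrac{1}{\sqrt{T_n}}\sumi \p_\nu\rho(\ve_i;\nu_0)$, the summands are i.i.d.\ with mean zero (score identity) and variance $\Gam_{\nu,0}$ (information equality, since $\p_\nu^2\rho$ is non-random). As the $\ve$-dependence of $\p_\nu\rho(\ve;\nu)$ is only through $\log(1+\ve^2)$, which has finite moments of every order under $t_{\nu_0}$, the classical i.i.d.\ CLT gives $\D_{\nu,n} \cil N(0,\Gam_{\nu,0})$ with $\sup_n \E|\D_{\nu,n}|^K < \infty$ for every $K > 0$.

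\emph{Plug-in error.} Writing $\hep_i = \alpha_n \ve_i + b_i$ with $\alpha_n := \sig_0/\ses$ and $b_i := -\ses^{-1}(\mes - \mu_0)\cdot(X_i - X_{i-1})$, Theorem \ref{hm:thm_CQLA} supplies $\sqrt{N_n}(\alpha_n - 1), \sqrt{N_n}(\mes - \mu_0) = O_p(1)$ with polynomial tail estimates. The score difference at $\nu_0$ equals $-\tfrac{1}{2\sqrt{T_n}}\sumi[\log(1+\hep_i^2) - \log(1+\ve_i^2)]$, and I would factor
\begin{align*}
\log(1+\hep_i^2) - \log(1+\ve_i^2)
&= \log\!\Big(1 + (\alpha_n^2-1)\tfrac{\ve_i^2}{1+\ve_i^2}\Big) \\
&\quad + \log\!\Big(1 + \tfrac{2\alpha_n b_i\ve_i + b_i^2}{1+\alpha_n^2\ve_i^2}\Big).
\end{align*}
The uniform bounds $\ve^2/(1+\ve^2) \le 1$ and $|2\alpha\ve|/(1+\alpha^2\ve^2) \le 1$ express each summand in terms of \emph{bounded, self-normalized} quantities; the aggregate absolute value is then $O_p(T_n N_n^{-1/2})$ by Assumption \ref{hm:A_X} and \eqref{hm:CQMLE.tpe}. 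Dividing by $\sqrt{T_n}$ yields $O_p(\sqrt{T_n/N_n}) = o_p(1)$ by \eqref{T/N->0}, with the corresponding $L^K$-versions following analogously.

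\emph{Assembly and main obstacle.} Convexity of $-\mbbh_{2,n}$, together with the score expansion $\tfrac{1}{\sqrt{T_n}} \p_\nu \mbbh_{2,n}(\nu_0) = \D_{\nu,n} + o_p(1)$ and the deterministic strictly negative Hessian, forces consistency $\nes \cip \nu_0$ and, via a one-step Taylor argument using the continuity of $\nu\mapsto\p_\nu^2\mbbh_{2,n}(\nu)/T_n$ at $\nu_0$, yields \eqref{hm:tQMLE.an}. The polynomial tail estimate \eqref{hm:tQMLE.tpe} follows from the Yoshida QLA framework, which in this convex setting reduces to the already established uniform $L^K$ bounds on the normalized score plus a quadratic identifiability lower bound supplied by strict positivity of the Hessian near $\nu_0$. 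The principal obstacle is the plug-in error in the heavy-tailed regime $\nu_0 \le 1$: a naive bound via $\sumi|\hep_i - \ve_i|$ fails since $\E|\ve_i| = \infty$, which is why the self-normalized decomposition above is essential, and why \eqref{T/N->0} provides exactly the right amount of slack to absorb the accumulated first-stage error.
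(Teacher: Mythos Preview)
Your argument is essentially correct and follows a route that differs from the paper's in one interesting respect. The paper controls $\rho(\hep_i;\nu)-\rho(\ve_i;\nu)$ via the mean-value expression $\int_0^1 \p_\ve\rho(\ve_i+s(\hep_i-\ve_i);\nu)\,ds\cdot(\hep_i-\ve_i)$ and, to make the bound $|\p_\ve\rho(\ve;\nu)|\lesssim(1+|\ve|)^{-1}$ usable at the intermediate point, splits on the event $H_{1,i}=\{|\hep_i-\ve_i|\le\tfrac12|\ve_i|\}$ and its complement, further decomposed into $H_{2,i}\cup H_{3,n}\cup H_{4,i}$. Your algebraic identity
\[
\log(1+\hep_i^2)-\log(1+\ve_i^2)=\log\!\Bigl(1+(\alpha_n^2-1)\tfrac{\ve_i^2}{1+\ve_i^2}\Bigr)+\log\!\Bigl(1+\tfrac{2\alpha_n b_i\ve_i+b_i^2}{1+\alpha_n^2\ve_i^2}\Bigr)
\]
achieves the same self-normalization \emph{without} the $H_{1,i}$/$H_{2,i}$ case analysis, exploiting that the $\ve$-dependence of $\rho$ enters only through $\log(1+\ve^2)$. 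This is cleaner on the main event and exposes more transparently why no moment of $\ve_i$ is ever needed. Both approaches ultimately use the same structural facts (deterministic Hessian, concavity, Yoshida's framework).

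There is, however, a real gap in your passage to the $L^K$-version. Your bound $|\log(1+x)|\lesssim|x|$ for the second logarithm requires the argument to stay bounded away from $-1$, i.e.\ $|b_i|+b_i^2<1/2$; and for the first logarithm you need $|\alpha_n^2-1|<1/2$. These fail on exceptional sets of the type $\{|\hat u_{a,n}|\ge c\sqrt{N_n}/b_n\}$ or $\{|X_i-X_{i-1}|\ge b_n\}$, which is precisely what the paper isolates as $H_{3,n}\cup H_{4,i}$. On those sets one cannot use the self-normalized bound at all: the paper instead invokes the crude estimate $\{\log(1+\ep^2)\}^{2K}\lesssim 1+|\ep|^{\nu_0/2}$ for $|\hep_i|$ and $|\ve_i|$, combines it via Cauchy--Schwarz with the polynomial tail estimate \eqref{hm:CQMLE.tpe} to get $\max_i\pr[H_{3,n}\cup H_{4,i}]=o(T_n^{-K})$, and only then obtains the uniform $L^K$ bound. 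Your phrase ``the corresponding $L^K$-versions following analogously'' hides exactly this step; it is not analogous to the good-event argument but requires the separate exceptional-set machinery. Once you add that piece (which your decomposition in no way obstructs), the proof is complete.
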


The maximum-likelihood estimation of $\nu_0$ can become more unstable for a larger value of $\nu_0$.
The Fisher information $\Gam_{\nu,0}$ quickly decreases to $0$ as $\nu_0$ increases:
the asymptotic variance $\Gam_{\nu,0}^{-1}$ equals about $1.22$, $5.63$, $13.8$, $25.9$, and $41.9$ for $\nu_0=1,2,3,4$, and $5$, respectively.
The damping speed becomes even faster in the case of conventional parametrization.
See, for example, \cite[Section 2.2]{Har13} and the references therein.


\subsection{Joint asymptotic normality}
\label{hm:sec_jan}

Having Theorems \ref{hm:thm_CQLA} and \ref{hm:thm_tQLA} in hand, we consider the question of the joint asymptotic normality of $\hat{a}_n$ and $\nes$. 

Recall that we are given the underlying filtration $(\mcf_t)_{t\ge 0}$.
From the proofs of Theorems \ref{hm:thm_CQLA} and \ref{hm:thm_tQLA}, we can write $(\hat{u}_{a,n}, \hat{u}_{\nu,n})=M_n + o_p(1)$ with
\begin{equation}
M_n = \frac{1}{\sqrt{T_n}} \sumi G_i =\frac{1}{\sqrt{T_n}} \sumi \left(G_{a,i},\, G_{\nu,i}\right)
\nn
\end{equation}
for a martingale difference $G_{\nu,i}=\p_\nu\rho(\ve_i;\nu_0) \in \mbbr$ with respect to $(\mcf_i)_{i\in\mbbzp}$ and
\begin{align}
G_{a,i} &:= \sqrt{\frac{T_n}{N_n}} \, I(i\le [B_n]) \sum_{j\in A_i} \psi_{a,ij},
\nonumber
\end{align}
where $A_i:=\{k\in\mbbn:\, (i-1)\, n < k\le i \,n\}$ and $\psi_{a,ij}=\psi_{a,ij}(a_0) := -\p_a \left\{ \log\sig + \log\left(1+\ep_j(a)^2\right)\right\}|_{a=a_0}$.
It will be seen in the proof of Lemma \ref{hm:cqmle_lem3} that $T_n^{-1/2} \sumi \E[G_{a,i} |\mcf_{t_{j-1}}] = o_p(1)$.
Hence, $M_n$ is a partial sum of the approximate martingale difference array with respect to $(\mcf_i)_{i\in\mbbzp}$.

The Lyapunov condition holds for $M_n$.
To deduce the first-order asymptotic behavior of $M_n$, we need to look at the convergence in probability of the matrix-valued quadratic characteristic:
\begin{align}
[M]_n &:=
\frac{1}{T_n} \sumi 
\begin{pmatrix}
G_{a,i}^{\otimes 2} & G_{a,i}G_{\nu,i} \\ \text{sym.} & G_{\nu,i}^2
\end{pmatrix},
\nonumber
\end{align}
which explicitly depends on $\tz=(a_0,\nu_0)$; among others, we refer to \cite[Chapter VII.8]{Shi95} for the related basic facts.
By Theorems \ref{hm:thm_CQLA} and \ref{hm:thm_tQLA}, it remains to manage the cross-covariation part of $[M]_n$.

To that end, we further assume
\begin{equation}
\frac{B_n}{T_n} \to 0.
\label{B/T->0}
\end{equation}
By Lemmas \ref{hm:cqmle_lem2} and \ref{hm:tpe_lem2} in Section \ref{hm:sec_proof_thm1}, for any $K>0$,
\begin{equation}
\max_{1\le i\le [T_n]}\max_{1\le j\le [n T_n]} \E\left[|\psi_{a,ij}|^K + |G_{\nu,i}|^K\right] 
+ \max_{1\le i\le [T_n]}\E\left[\left| \frac{1}{\sqrt{n}}\sum_{j\in A_i} \psi_{a,ij} \right|^{K}\right] = O(1).
\nonumber
\end{equation}
This estimate leads to
\begin{align}
\frac{1}{T_n} \sumi G_{\nu,i}G_{a,i}
= \sqrt{\frac{B_n}{T_n}}\,\frac{1}{B_n}\sum_{i=1}^{[B_n]} G_{\nu,i} \sqrt{\frac{1}{n}} \sum_{j\in A_i} \psi_{a,ij}
= O_p\left(\sqrt{\frac{B_n}{T_n}}\right) = o_p(1).
\nonumber
\end{align}
Thus, the additional condition \eqref{B/T->0} approximately quantifies how much data we should discard to make the estimators $\hat{a}_n$ and $\nes$ independent.

Under Assumption \ref{hm:A_X},
\begin{align}
\hat{S}_n 
&:= \frac{1}{N_n} \sum_{j=1}^{N_n} \left(\frac{1}{h}\D_j X\right)^{\otimes 2}
\nn\\
&= \frac{1}{N_n} \sum_{j=1}^{N_n} \left\{\left(\frac{1}{h}\D_j X - X'_{t_{j-1}} \right) + X'_{t_{j-1}}\right\}^{\otimes 2}
\nn\\
&= \frac{1}{N_n} \sum_{j=1}^{N_n} (X'_{t_{j-1}})^{\otimes 2} + o_p(1) \cip S_0,\qquad n\to\infty,
\label{hm:S-hat}
\end{align}
Recalling the notation \eqref{hm:def_Gam_a} and \eqref{hm:def_Gam_nu}, we introduce
\begin{align}
\hat{\Gam}_{a,n} &:= \diag\left( \frac{1}{2\ses^2} \,\hat{S}_n , ~\frac{1}{2\ses^2} \right),
\nn\\
\hat{\Gam}_{\nu,n} &:= \frac{1}{4}\left(
\psi_1\left(\frac{\nes}{2}\right) - \psi_1\left(\frac{\nes+1}{2}\right)
\right).
\nonumber
\end{align}
Obviously we have $(\hat{\Gam}_{a,n},\hat{\Gam}_{a,n}) \cip (\Gam_{a,0},\Gam_{\nu,0})$, 
which combined with \eqref{hm:CQMLE.an} and \eqref{hm:tQMLE.an} leads to the joint asymptotic normality:

\medskip

\begin{thm}
\label{hm:thm_joint.AN}
Under 
\hmrrev{
the assumptions in Theorem \ref{hm:thm_CQLA}, 
}
\eqref{T/N->0}, and \eqref{B/T->0}, we have
\begin{equation}
\left( \hat{\Gam}_{a,n}^{1/2} \hat{u}_{a,n},\, \hat{\Gam}_{\nu,n}^{1/2}\hat{u}_{\nu,n}\right)
=
\diag
\left( \hat{\Gam}_{a,n}^{1/2},\, \hat{\Gam}_{\nu,n}^{1/2}\right) M_n + o_p(1) \cil N_{q+2}(0,I_{q+2}).
\label{hm:thm_joint.AN-1}
\end{equation}
\end{thm}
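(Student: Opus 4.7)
The plan is to reduce the statement to a joint central limit theorem for the bivariate approximate martingale difference array $\{G_i = (G_{a,i}, G_{\nu,i})\}_{i=1}^{[T_n]}$ and then finish by a Slutsky-type argument with the plug-in information matrices. First, using the stochastic expansions already established in the proofs of Theorems \ref{hm:thm_CQLA} and \ref{hm:thm_tQLA}, we have
\begin{equation}
\left(\hat{u}_{a,n},\,\hat{u}_{\nu,n}\right) = M_n + o_p(1), \qquad M_n = \frac{1}{\sqrt{T_n}}\sumi G_i,
\nonumber
\end{equation}
so everything reduces to identifying the weak limit of $M_n$ and then composing with $\diag(\hat{\Gam}_{a,n}^{1/2},\hat{\Gam}_{\nu,n}^{1/2})$.

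Next I would verify the hypotheses of a martingale CLT applied to $M_n$. The compensator condition noted in the text, namely $T_n^{-1/2}\sumi \E[G_{a,i}\mid\mcf_{t_{i-1}}] = o_p(1)$, together with the exact martingale-difference property of $G_{\nu,i}$, lets us treat $(G_i)$ as an approximate martingale difference array with respect to $(\mcf_i)_{i\in\mbbzp}$. The Lyapunov condition follows from the uniform $L^K$-bounds on $\psi_{a,ij}$ and $G_{\nu,i}$ stated in the paragraph before the theorem combined with the scaling factor $1/\sqrt{T_n}$. What remains is to compute the in-probability limit of the quadratic characteristic $[M]_n$. The $(a,a)$-block is essentially the one appearing in the proof of Theorem \ref{hm:thm_CQLA}: the inner sums $n^{-1/2}\sum_{j\in A_i}\psi_{a,ij}$, aggregated over $i\le [B_n]$ and rescaled by $\sqrt{T_n/N_n}$, reproduce $N_n^{-1}\sum_{j=1}^{N_n}\psi_{a,ij}^{\otimes 2}$ up to asymptotically negligible terms, which by Assumption \ref{hm:A_X}.3--4 converges in probability to $\Gam_{a,0}$. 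The $(\nu,\nu)$-block converges to $\Gam_{\nu,0}$ by the ergodic theorem applied to the i.i.d.\ sequence $(\p_\nu\rho(\ve_i;\nu_0))^2$, as already observed in the proof of Theorem \ref{hm:thm_tQLA}.

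The key off-diagonal contribution is the main point specific to this theorem and is handled by the explicit bound written just before the statement: uniform moment control gives
\begin{equation}
\frac{1}{T_n}\sumi G_{\nu,i} G_{a,i} = O_p\left(\sqrt{\frac{B_n}{T_n}}\right) = o_p(1)
\nonumber
\end{equation}
under \eqref{B/T->0}, so $[M]_n \cip \diag(\Gam_{a,0},\Gam_{\nu,0})$. The martingale CLT then yields $M_n \cil N_{q+2}\bigl(0,\diag(\Gam_{a,0},\Gam_{\nu,0})\bigr)$. Finally, Theorem \ref{hm:thm_CQLA} gives $\ses\cip\sig_0$ and \eqref{hm:S-hat} gives $\hat{S}_n\cip S_0$, so $\hat{\Gam}_{a,n}\cip\Gam_{a,0}$; Theorem \ref{hm:thm_tQLA} and continuity of $\psi_1$ give $\hat{\Gam}_{\nu,n}\cip\Gam_{\nu,0}$; hence by continuity of the matrix square root and Slutsky's lemma, $\diag(\hat{\Gam}_{a,n}^{1/2},\hat{\Gam}_{\nu,n}^{1/2})\, M_n \cil N_{q+2}(0,I_{q+2})$, which is \eqref{hm:thm_joint.AN-1}.

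The main obstacle, in my view, is not the off-diagonal vanishing (which follows from the displayed Cauchy--Schwarz estimate and \eqref{B/T->0}) but rather making the reduction to an approximate martingale difference array airtight: one must justify that the drift term $\E[G_{a,i}\mid\mcf_{t_{i-1}}]$ is negligible after summation, which ultimately rests on the last condition in Assumption \ref{hm:A_X}.2 combined with the fact that $\D_j J/h$ is approximately standard Cauchy so that $(\p\phi_1/\phi_1)(\D_j J/h)$ has (asymptotically) zero conditional mean. Once this point is secured by the estimates developed for Theorem \ref{hm:thm_CQLA}, the rest of the argument is a straightforward bivariate martingale CLT combined with Slutsky.
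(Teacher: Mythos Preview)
Your proposal is correct and follows essentially the same approach as the paper: the discussion in Section~\ref{hm:sec_jan} preceding the theorem statement \emph{is} the paper's proof, and you have reproduced it faithfully (approximate martingale CLT for $M_n$ with respect to $(\mcf_i)_{i\in\mbbzp}$, diagonal blocks of $[M]_n$ handled via Theorems~\ref{hm:thm_CQLA} and~\ref{hm:thm_tQLA}, off-diagonal block killed by the $O_p(\sqrt{B_n/T_n})$ bound under \eqref{B/T->0}, then Slutsky with $(\hat{\Gam}_{a,n},\hat{\Gam}_{\nu,n})\cip(\Gam_{a,0},\Gam_{\nu,0})$). One small slip: the compensator should be conditioned on $\mcf_{i-1}$ (the unit-time filtration), not $\mcf_{t_{i-1}}=\mcf_{(i-1)/n}$.
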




\begin{rem}[Asymptotic mixed normality at the first stage]
\normalfont
\label{hm:rem_AMN}
We have focused on a diverging $B_n$ to estimate $a=(\mu,\sig)$ at the first stage (recall \eqref{hm:B_order}).
Nevertheless, we could consider a constant $B_n$, say $B_n\equiv B\in\mbbn$ (then \eqref{B/T->0} is trivial), with which the CQMLE considered in Section \ref{hm:sec_Cauchy.QLA} is asymptotically mixed normal (MN):
\begin{equation}
\left(\hat{u}_{a,n},\, \hat{u}_{\nu,n}\right) \cil MN_{q+1}(0,\Gam_{a,0}^{-1}) \otimes N_1(0,\Gam_{\nu,0}^{-1}),
\nonumber
\end{equation}
now $\Gam_{a,0} = (2\sig_0^2)^{-1}\diag\left(S_0, 1\right)$ with $S_0=B^{-1}\int_0^B X^{\prime \otimes 2}_t dt$ being random.
Then, \eqref{hm:thm_joint.AN-1} remains valid by the proof of \eqref{hm:tQMLE.an} in Theorem \ref{hm:thm_tQLA}.
Without going into details, we give some brief remarks.
The asymptotic mixed normality is deduced as in \cite{Mas19spa} with a slight modification of the proof of the stable convergence in law of $\D_{a,n}$ (to handle the filtration-structure issue caused by a {\wp} $w$ driving $X'_2$: see \cite[Section 6.4.2]{Mas19spa}).
Because of the stability of the convergence in law, the conclusion 
of Theorem \ref{hm:thm_joint.AN} remains valid as it is with all the ``$N_n$'' therein replaced by ``$nB$''.
\end{rem}

\medskip

Our primary theoretical interest was to deduce the mighty convergence of $(\hat{u}_{a,n},\hat{u}_{\nu,n})$: not only the asymptotic normality but also the tail-probability estimate.
If we are only interested in deriving the asymptotic normality, there are several possible ways to significantly weaken Assumption \ref{hm:A_X}.
Here is a version with the essential boundedness of $X'$.

\medskip

\begin{assump}
\label{hm:A_X_AN}
For $X$, we can associate an $(\mcf_t)$-adapted process $t\mapsto X'_t$ having {\cadlag} sample paths, for which the following conditions hold:
\begin{enumerate}
\item $\ds{\sup_{t\ge 0}| X'_t | <\infty}$ a.s.
\item 
\hmrev{
The same condition as in Assumption \ref{hm:A_X}.3 holds except that \eqref{hm:A_X-3} is weakened to
\begin{equation}
\frac{1}{N_n} \sum_{j=1}^{N_n} f(X'_{t_{j-1}}) \cip \int \psi_f (z)\pi_0(dz).
\nn
\end{equation}
}

\item 
\hmrev{
$\ds{\frac{1}{N_n}\sum_{j=1}^{N_n} \E\left.\left[\left|\frac1h \left( \D_j X - h X'_{t_{j-1}}\right)\right|^2 \right| \mcf_{t_{j-1}}\right] = o_p(1)}$.
}

\item Assumption \ref{hm:A_X}.4 holds.
\item 
\hmrev{
$\ds{\frac{1}{\sqrt{N_n}}\sumjj \E\left.\left[ \frac1h \left(\D_j X  - h X'_{t_{j-1}}\right)
\,\frac{\p\phi_1}{\phi_1}\left(\frac{\D_j J}{h}\right) \right| \mcf_{t_{j-1}}\right] = o_p(1)}$
}
\end{enumerate}
\end{assump}

\medskip

\begin{thm}
\label{hm:thm_CQMLE.AN}
Let $\nu_0>1$. Suppose that 
\hmrev{\eqref{hm:B_order},} \eqref{T/N->0}, \eqref{B/T->0}, and Assumption \ref{hm:A_X_AN} hold.
Then, we have the joint asymptotic normality \eqref{hm:thm_joint.AN-1}.
\end{thm}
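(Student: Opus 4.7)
The plan is to retrace the argument leading to Theorem \ref{hm:thm_joint.AN}, but replacing every moment-bound/polynomial-type large-deviation ingredient (which relied on Assumption \ref{hm:A_X}) by a convergence-in-probability analogue supported by Assumption \ref{hm:A_X_AN}. Concretely, I would split the proof into three blocks: (a) consistency and stochastic linearisation of $\hat a_n$; (b) consistency and stochastic linearisation of $\hat\nu_n$; (c) a joint martingale CLT for the centred score vector $M_n$. In (c) only conditional-variance convergence and a Lindeberg-type condition are needed, so the moment-free framework of Assumption \ref{hm:A_X_AN} suffices; in (a) and (b) convexity/quasi-convexity plus uniform convergence in probability will replace the tail-probability estimates \eqref{hm:CQMLE.tpe} and \eqref{hm:tQMLE.tpe}.

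First, for $\hat a_n$, I would establish consistency through a uniform-in-$a$ in-probability convergence of $N_n^{-1}(\mbbh_{1,n}(a)-\mbbh_{1,n}(a_0))$ to $\mbby_1(a)$ on compacts of $\Theta_\mu\times\Theta_\sig$. Pointwise convergence follows from Assumption \ref{hm:A_X_AN}.1--2 (together with the local-Cauchy limit theorem, Lemma \ref{hm:lem_loc.lim.thm}), while equicontinuity in $a$ follows from the a.s.\ boundedness of $X'$ combined with the boundedness and Lipschitz continuity of the Cauchy score and information integrands; then the identifiability hypothesis \eqref{hm:QLA.thm1-4} gives $\hat a_n\cip a_0$ by the standard argmax argument. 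Linearisation then comes from Taylor-expanding $\p_a\mbbh_{1,n}$ at $a_0$: the observed-information average $-N_n^{-1}\p_a^2\mbbh_{1,n}(a_0)$ converges in probability to $\Gam_{a,0}$ by Assumption \ref{hm:A_X_AN}.2 and the Cauchy local limit theorem, and a uniform-on-neighbourhoods bound on the remainder $\p_a^2\mbbh_{1,n}(a)-\p_a^2\mbbh_{1,n}(a_0)$ (again via $\sup_t|X'_t|<\infty$ a.s.) together with consistency yields $\hat u_{a,n}=\Gam_{a,0}^{-1}\D_{a,n}+o_p(1)$. Here Assumption \ref{hm:A_X_AN}.5 is crucial: it kills the bias that would otherwise arise from the Euler-type approximation $\D_jX\approx hX'_{t_{j-1}}$ in the score, so $\D_{a,n}$ is asymptotically an $(\mcf_i)$-martingale difference sum. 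For $\hat\nu_n$, the function $-\mbbh_{2,n}$ is a.s.\ strictly convex by \eqref{hm:ep.hat-ep}; using $\nu_0>1$ to ensure $\E|\ve_i|<\infty$ (so that plugging in $\hep_i$ instead of $\ve_i$ produces an $o_p(1)$ perturbation of the score and information, via the already-proved rate $\sqrt{N_n/T_n}\to\infty$ of $\hat a_n$ and a one-step Taylor expansion of $\p_\nu\rho$), I would deduce $\hat\nu_n\cip\nu_0$ from the convexity--consistency lemma (pointwise a.s.\ convergence of a convex sequence to a strictly convex limit implies convergence of argmins); the linearisation $\hat u_{\nu,n}=\Gam_{\nu,0}^{-1}\D_{\nu,n}+o_p(1)$ then follows from a single Taylor step together with $T_n^{-1}\p_\nu^2\mbbh_{2,n}(\nu_0)\cip -\Gam_{\nu,0}$, which is just an i.i.d.\ LLN on $(\ve_i)$ plus the residual-perturbation control just mentioned.

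Once the two linearisations are in place, the joint asymptotic normality reduces to the CLT for $M_n=T_n^{-1/2}\sum_{i=1}^{[T_n]}(G_{a,i},G_{\nu,i})$, which I would verify by Brown's martingale CLT applied to the filtration $(\mcf_i)_{i\in\mbbzp}$. The Lindeberg condition follows from $\nu_0>1$ (for $G_{\nu,i}$, whose law is fixed i.i.d.) and from the essential boundedness of $X'$ together with the boundedness of $\p\phi_1/\phi_1$ (for $G_{a,i}$). For the quadratic characteristic $[M]_n$, the diagonal blocks converge in probability to $\Gam_{a,0}$ and $\Gam_{\nu,0}$ by Assumption \ref{hm:A_X_AN}.2 combined with the Cauchy local limit theorem and the i.i.d.\ LLN respectively, whereas the off-diagonal block is treated exactly as in Section \ref{hm:sec_jan}: the Cauchy-Schwarz-type bound shown there still works once the uniform $L^2$ bounds in $i$ are replaced by their in-probability counterparts, and the condition \eqref{B/T->0} forces the cross-covariation to be $o_p(1)$. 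Combining these ingredients with Slutsky and the plug-in consistency $(\hat\Gam_{a,n},\hat\Gam_{\nu,n})\cip(\Gam_{a,0},\Gam_{\nu,0})$ (which needs only Assumption \ref{hm:A_X_AN}.2 and $\hat a_n,\hat\nu_n\cip a_0,\nu_0$) yields the stated joint asymptotic normality.

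The main obstacle I expect is the consistency and linearisation of $\hat a_n$ without the moment machinery: all previous uniform-in-$a$ controls were established via Sobolev inequalities and polynomial moment bounds that are no longer available. The remedy is to exploit $\sup_t|X'_t|<\infty$ a.s.\ together with the boundedness of the derivatives of $y\mapsto\log(1+y^2)$ weighted by Cauchy integrands, and to localise on the a.s.-finite random bound $\|X'\|_\infty\le R$; outside $\{\|X'\|_\infty\le R\}$ everything is controlled by letting $R\to\infty$ after passing to the limit. A secondary subtlety is propagating the perturbation $\hep_i-\ve_i$ through $\p_\nu\rho$ without moment hypotheses on $J$: here $\nu_0>1$ is used precisely to obtain $\E|\ve_i|<\infty$ and hence to reduce the perturbation to an $o_p(1)$ error after summation over $i\le[T_n]$.
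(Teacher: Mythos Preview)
Your plan coincides with the paper's proof in overall structure and in the identification of where each hypothesis is used: consistency of $\hat a_n$ via uniform-in-$a$ convergence $\sup_a|\mbby_{1,n}(a)-\mbby_1(a)|\cip 0$, linearisation $\hat u_{a,n}=\Gam_{a,0}^{-1}\D_{a,n}+o_p(1)$ following the lines of Lemma~\ref{hm:cqmle_lem3} (with Assumption~\ref{hm:A_X_AN}.3 controlling the quadratic characteristic of the $R_{n,j}$-martingale piece and Assumption~\ref{hm:A_X_AN}.5 killing its compensator), the concavity/locally-asymptotically-quadratic route \eqref{hm:nu.laq} for $\hat\nu_n$ with the residual perturbation in \eqref{hm:tpe_lem2-1} absorbed using $\nu_0>1$ and $T_n/N_n\to 0$, and finally the cross-covariation argument of Section~\ref{hm:sec_jan} under \eqref{B/T->0}. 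So the proposal is correct and is essentially the paper's proof.

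One point where you over-complicate: you anticipate having to abandon the Sobolev/Burkholder machinery and instead localise on $\{\|X'\|_\infty\le R\}$. The paper does \emph{not} do this. Its ``essential boundedness'' in Assumption~\ref{hm:A_X_AN}.1 is meant in the $L^\infty$ sense (a deterministic bound), as is clear from the way the proof writes $\sup_a|\p_a\zeta_j(a)|\lesssim 1+|X'_{t_{j-1}}|\lesssim 1$. Consequently the same Burkholder and Sobolev inequalities used in Section~\ref{hm:sec_proof_C.lem1} are still available; what changes is only that the rate factors $N_n^{c_2}$ are dropped and the conclusions are stated as $O_p(1)$ or $o_p(1)$ rather than as uniform $L^K$-bounds. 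Your localisation device would also work, but it is unnecessary under the intended reading of the assumption. A second minor remark: the Lindeberg condition for $G_{\nu,i}=\p_\nu\rho(\ve_i;\nu_0)$ does not need $\nu_0>1$, since by \eqref{hm:rho.prop-1} this score has all moments for every $\nu_0>0$; the hypothesis $\nu_0>1$ enters only through $T_n^{-1}\sum_i|\ve_i|=O_p(1)$ in the residual-perturbation bound.
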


\subsection{Student-L\'{e}vy driven Markov process and Euler approximation}
\label{hm:sec_SDE}

We can handle a class of L\'{e}vy driven stochastic differential equation for $X_2$ in the context of Theorem \ref{hm:thm_CQMLE.AN}.
Consider a sample $(Y_{t_j})_{j=0}^{[n T_n]}$ from a solution to the Markov process ($Y_0=0$) described by the model
\begin{equation}
Y_t = \mu\cdot \int_0^t b(Y_s) ds + \sig J_t,
\nonumber
\end{equation}
where $\nu_0>1$ and $b:\,\mbbr\to\mbbr^q$ is a known measurable function.
We associate $X_t=\int_0^t b(Y_s)ds$ with $X'_t := b(Y_t)$.
So far it is assumed that $\D_j X = \int_{t_{j-1}}^{t_j}b(Y_s)ds$ is observable, which may seem unnatural in the present context. In this section, we will establish a variant of the asymptotic normality \eqref{hm:thm_joint.AN-1} when only $(Y_{t_j})_{j=0}^{[n T_n]}$ 
is observable.
To this end, we assume the following conditions, \hmrev{which will guarantee the ergodicity of $Y$}:

\medskip

\begin{assump}
\label{hm:assump_SDE}
The function $b$ is continuously differentiable and satisfies that
\begin{equation}
\sup_{y\in\mbbr}(|b(y)|\vee |\p_y b(y)|) < \infty 
\qquad \text{and}\qquad 
\hmrev{
\limsup_{|y|\to\infty}y\,\mu_0 \cdot b(y) = -\infty.
}
\nonumber
\end{equation}
\end{assump}

Let $b_{j-1}:=b(Y_{t_{j-1}})$ and introduce the following variant of $\mbbh_{1,n}(a)$:
\begin{align}
\tilde{\mbbh}_{1,n}(a) 
&:= \sum_{j=1}^{N_n} \log\left\{\frac{1}{h\sig}\phi_1\left( \frac{\D_j Y - h \mu\cdot b_{j-1}}{h\sig} \right)\right\}
=: \yurev{ \sum_{j=1}^{N_n} \log\left\{\frac{1}{h\sig}\phi_1(\ep'_j(a))\right\}}.
\nonumber
\end{align}
Define $\tilde{a}_n=(\tilde{\mu}_n,\tilde{\sig}_n)$ by any $\tilde{a}_n\in\argmax \tilde{\mbbh}_{1,n}$.
We write
\begin{equation}
\tep_i = \tilde{\sig}^{-1}\bigg( Y_i - Y_{i-1} - h \tilde{\mu}_n \cdot \sum_{j\in A_i} b_{j-1}\bigg)
\nonumber
\end{equation}
for $i=1,\dots,[T_n]$ (recall the notation $A_i=\{k\in\mbbn:\, (i-1)\, n < k\le i \,n\}$).
Next, we introduce
\begin{equation}
\tilde{\mbbh}_{2,n}(\nu) := \sumi \rho(\tep_i;\nu)
\label{hm:def_tQLF-2}
\end{equation}
and $\tilde{\nu}_n\in\argmax \tilde{\mbbh}_{2,n}$;
recall the definition $\rho(\ve;n)= \log f(\ve;0,1,\nu)$ (Section \ref{hm:sec_tQLA-add1}).
Let $\tilde{u}_{a,n} := \sqrt{N_n}(\tilde{a}_n - a_0)$, $\tilde{u}_{\nu,n} :=\sqrt{T_n}(\tilde{\nu}_n - \nu_0)$, and moreover
\begin{equation}
\tilde{\Gam}_{a,n} := \diag\left( \frac{1}{2\tilde{\sig}_n^2} \,\tilde{S}_n , ~\frac{1}{2\tilde{\sig}_n^2} \right),
\qquad 
\tilde{\Gam}_{\nu,n} := \frac{1}{4}\left(\psi_1\left(\frac{\tilde{\nu}_n}{2}\right) - \psi_1\left(\frac{\tilde{\nu}_n+1}{2}\right)\right),
\nonumber
\end{equation}
where $\tilde{S}_n := N_n^{-1} \sum_{j=1}^{N_n} b_{j-1}^{\otimes 2}$.

\medskip

\begin{thm}
\label{hm:thm_CQMLE.AN-SDE}
Let \hmrev{$\nu_0>2$} and suppose that \eqref{T/N->0}, \eqref{B/T->0}, and Assumption \ref{hm:assump_SDE} hold.
\hmrev{
Then, $Y$ is ergodic and admits a unique invariant distribution $\pi_{0,Y}(dy)$.
Moreover, we have
\begin{equation}
\left( \tilde{\Gam}_{a,n}^{1/2} \tilde{u}_{a,n},\, \tilde{\Gam}_{\nu,n}^{1/2} \tilde{u}_{\nu,n}\right)
=\diag\left( \tilde{\Gam}_{a,n}^{1/2},\, \tilde{\Gam}_{\nu,n}^{1/2}\right) M_n + o_p(1) \cil N_{q+2}(0,I_{q+2})
\nn
\end{equation}
provided that $\ds{S_0 := \int b(y)^{\otimes 2}\pi_{0,Y}(dy)}$ is positive definite.
}
\end{thm}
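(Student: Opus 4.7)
The plan is to reduce Theorem \ref{hm:thm_CQMLE.AN-SDE} to Theorem \ref{hm:thm_CQMLE.AN} applied with covariate $X'_t := b(Y_t)$ and $X_t := \int_0^t b(Y_s)\,ds$, and then bridge from the resulting idealized CQMLE (based on the unobservable $\D_j X$) to the Euler-approximated estimator $\tilde a_n$. I would organize the argument into three steps.

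\emph{Step 1: ergodicity and Assumption \ref{hm:A_X_AN}.1--4.} Under Assumption \ref{hm:assump_SDE} and $\nu_0>2$, the coercivity $\limsup_{|y|\to\infty}y\mu_0\cdot b(y)=-\infty$ yields a polynomial Lyapunov function for the generator of $Y$, and the fully supported Student-$t$ L\'evy measure provides irreducibility; Meyn--Tweedie-type arguments along the lines of \cite{Kul09} and \cite{Mas07} then yield exponential $\beta$-mixing, a unique invariant law $\pi_{0,Y}$, and polynomial moment bounds $\sup_t\E[|Y_t|^K]<\infty$ for all $K<\nu_0$. Boundedness of $b,\p b$ combined with the variance bound $\E[|Y_s-Y_{t_{j-1}}|^2\mid\mcf_{t_{j-1}}]=O(h)$ (which needs $\nu_0>2$) then transfer items~1--3 of Assumption \ref{hm:A_X_AN} for $(X,X')$ with $q'=1$, $\pi_0=\pi_{0,Y}$, and $\psi_f(y)=f(b(y))$; item~4 is our hypothesis on $S_0$.

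\emph{Step 2: bridging $(\tilde a_n,\tilde\nu_n)$ to the ideal estimator.} Once Assumption \ref{hm:A_X_AN} is verified, Theorem \ref{hm:thm_CQMLE.AN} delivers the joint asymptotic normality of the idealized $(\hat a_n,\hat\nu_n)$ built from $\D_j X$. Using $\tilde{\varepsilon}_j=\D_j J/h+\mu_0\cdot h^{-1}(\D_j X-hb_{j-1})/\sig_0$ and Taylor-expanding $g:=\p\phi_1/\phi_1$ around $\D_j J/h$, the difference $\p_a\tilde{\mbbh}_{1,n}(a_0)-\p_a\mbbh_{1,n}(a_0)$ splits into $-\sig_0^{-1}\sum h^{-1}(\D_j X-hb_{j-1})g(\D_j J/h)$ plus a remainder controlled by $g',g''$. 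Decomposing each sum into its $\mcf_{t_{j-1}}$-conditional mean (exactly the quantity appearing in Assumption \ref{hm:A_X_AN}.5, possibly with $g$ replaced by $g'$) and a martingale-difference part whose predictable quadratic variation is $O(N_n h)=O(B_n)$, both pieces are $o_p(\sqrt{N_n})$; a standard M-estimator argument using the convergence of the observed information then yields $\tilde a_n-\hat a_n=o_p(N_n^{-1/2})$. The analogous reduction at unit-time scale, together with the proof of Theorem \ref{hm:thm_tQLA}, gives $\tilde\nu_n-\hat\nu_n=o_p(T_n^{-1/2})$; the key estimate there is the sharper bound $\sum_{j\in A_i}(\D_j X-hb_{j-1})=O_p(h)$ (obtained via Fubini and the It\^o decomposition below), which ensures that $\hep_i-\tep_i$ contributes only $O_p(\sqrt{T_n}/n)=o_p(1)$ to the rescaled second-stage score under \eqref{T/N->0}.

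\emph{Main obstacle.} The hardest ingredient is verifying Assumption \ref{hm:A_X_AN}.5, namely $\sum_j\E[h^{-1}(\D_j X-hb_{j-1})g(\D_j J/h)\mid\mcf_{t_{j-1}}]=o_p(\sqrt{N_n})$. The crude Cauchy--Schwarz bound $\|h^{-1}(\D_j X-hb_{j-1})\|_{L^2}\cdot\|g\|_\infty=O(\sqrt h)$ per term cumulates to $O(\sqrt{B_n})$ after $\sqrt{N_n}$-normalization, which is insufficient. To obtain a per-term bound of order $h$, I would invoke the It\^o expansion
\begin{equation}
b(Y_s)-b(Y_{t_{j-1}}) = \int_{t_{j-1}}^s \p b(Y_{u-})\mu_0\cdot b(Y_{u-})\,du + \int_{t_{j-1}}^s\!\!\int[b(Y_{u-}+\sig_0 z)-b(Y_{u-})]\,\tilde N(du,dz) + (\text{compensator}).
\nonumber
\end{equation}
The drift and compensator pieces are predictably of magnitude $O(s-t_{j-1})$ and time-integrate to $O(h)$ per term; the compensated-jump piece, multiplied by $g(\D_j J/h)$, is controlled via the Mecke/Malliavin integration-by-parts formula on the Poisson space, which transfers the sensitivity onto $g$ and yields an $O(h)$ estimate using the boundedness of $g$ and the moment bound on $J$-increments from $\nu_0>2$. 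Summing over $j$ then gives $O(N_n h)=O(B_n)$, so after $\sqrt{N_n}$-normalization the bound becomes $O(\sqrt{B_n/n})=o(1)$ under \eqref{hm:B_order}, completing the proof.
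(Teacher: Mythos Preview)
Your overall architecture --- verify Assumption \ref{hm:A_X_AN} for $X'_t=b(Y_t)$, invoke Theorem \ref{hm:thm_CQMLE.AN}, then bridge from $(\hat a_n,\hat\nu_n)$ to the Euler estimators $(\tilde a_n,\tilde\nu_n)$ --- is exactly the paper's. Steps 1 and 2 are essentially right in outline.

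The substantive discrepancy is your treatment of the ``main obstacle'', Assumption \ref{hm:A_X_AN}.5 (and the parallel score-difference in Step~2). You miss an elementary direct bound. From the SDE,
\[
|Y_s - Y_{t_{j-1}}| \le (s-t_{j-1})\,\|\mu_0\cdot b\|_\infty + \sig_0\,|J_s - J_{t_{j-1}}|,
\]
and since $J$ is locally Cauchy with $\E[|J_1|]<\infty$ (certainly under $\nu_0>2$), the Luschgy--Pag\`es estimate \cite[Theorem 3]{LusPag08} gives $\E[\sup_{t\in[0,h]}|J_t|] \lesssim h\log(1/h)$. Lipschitz continuity of $b$ and boundedness of $g_1=\p\phi_1/\phi_1$ then yield the per-term $L^1$ bound
\[
\E\!\left[\left|\tfrac{1}{h}(\D_j X - h b_{j-1})\,g_1(\D_j J/h)\right|\right]
\lesssim \E\!\left[\tfrac{1}{h}\!\int_{t_{j-1}}^{t_j}\!|Y_s-Y_{t_{j-1}}|\,ds\right]
\lesssim h\log(1/h),
\]
so the sum in item~5 is $O\bigl(\sqrt{N_n}\,h\log(1/h)\bigr)=O(n^{-\ep'/2}\log n)=o(1)$ under \eqref{hm:B_order} --- no cancellation from $g_1$ is used at all. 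Your premise that the ``crude'' bound is $O(\sqrt h)$ per term tacitly goes through $L^2$; the $L^1$ norm is already of order $h$ up to a log. The paper uses this same Luschgy--Pag\`es bound a second time, in exactly your Step~2, to show $N_n^{-1/2}\p_a\del_n(a_0)=o_p(1)$ and $T_n^{-1/2}\p_\nu(\tilde{\mbbh}_{2,n}-\mbbh_{2,n})(\nu_0)=o_p(1)$, without Taylor-expanding $g_1$ or extracting a martingale.

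By contrast, your proposed It\^o/Mecke--Malliavin route is seriously undercooked. The Student-$t$ L\'evy measure has infinite activity with density $\sim \pi^{-1}z^{-2}$ near $0$, so the ``compensator'' term in your It\^o expansion is divergent without a small-jump truncation and centering; and the integration-by-parts step that ``transfers the sensitivity onto $g$'' is not spelled out: both the integrand $b(Y_{u-}+\sig_0 z)-b(Y_{u-})$ and the test functional $g_1(\D_j J/h)$ are Poisson functionals on the same interval $[t_{j-1},t_j]$, so any Mecke shift perturbs both simultaneously. I do not see how to extract a clean $O(h)$ bound this way without, in effect, rediscovering the direct $L^1$ estimate above.
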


We see from the proof of Theorem \ref{hm:thm_CQMLE.AN-SDE} that rephrasing observed variables from $(X_{t_j})$ to $(X'_{t_j})$ when $X_t=\int_0^t X'_s ds$ is possible in a more general framework.

\section{Numerical experiment}
\label{sec_numerics}

In this section, we consider two deterministic and periodic regressors and the dynamics of the model have the following form:
\begin{equation}
Y_t =\mu_1 \cos\left(5t\right)+\mu_2 \sin\left(t\right) +\sigma J_t.
\label{Regr1}
\end{equation}

Since the Student-$t$ distribution is not closed under convolution as was stated in the introduction, we adopt an approximation method based on the inversion of the characteristic function for generating $J_h$ with general $h\neq 1$.
More specifically, our approximation is as follows:
First, on some grid $x_0,\dots, x_N$, we calculate the value of the density function of $J_h$ based on the following general inversion formula:
\begin{equation}
f\left(x\right) = \frac{1}{2\pi}\int_{-\infty}^{+\infty} e^{-iux}\varphi\left(u\right) \mbox{d}u,
\label{eq:INVCharLM}
\end{equation}
by using a numerical integration method, for example, Fast Fourier Transformation, Fourier Cosine expansion, and so on.
Next, we approximate the cumulative distribution function using the Left-Riemann summation computed on the grid $x_0,\dots, x_N$, therefore the cumulative distribution function $F\left(\cdot\right)$ for each $x_j$ on the grid is determined as follows:
\begin{equation}
\hat{F}\left(x_j\right) = \sum_{x_k<x_j}\hat{f}_{N}\left(x_k\right)\Delta x,
\end{equation}
where $\hat{f}_{N}\left(x_k\right)$ denotes the approximated value of $f(x_k)$.
Finally, we evaluate the cumulative distribution function at any $x\in \left( x_{j-1}, x_j \right)$, by interpolating linearly its value using the couples $\left(x_{j-1}, \hat{F}\left(x_{j-1}\right)\right)$ and $\left(x_{j}, \hat{F}\left(x_{j}\right)\right)$.
The random numbers can be obtained using the inversion sampling method.
Our simulation method and estimation procedure are available in \yuima package on R;
%
see the companion paper \cite{Hiroki2024Student} for details.

For the first simulation exercise, we set $T_n=400$, $B_n = 20$ and $h =1/200$. We simulate 1000 trajectories of the model with deterministic regressors in \eqref{Regr1} with $\mu_1 = 5$, $\mu_2 = -1$, $\sigma_0=3$ and $\nu = 1$. Estimating the parameters for each trajectory, we obtain the empirical distribution of the studentized estimates.
Figure \ref{fig:Case_h_0005_T_400_B_20_FFT_up_100_N_2_16_Ngrid1_5milion} shows the simulated distribution of studentized estimates. 
Notably, all histograms demonstrate that the standard normal approximation adequately captures the behavior of model parameters. 
Favorable results are obtained for the regressors $\left(\mu_1, \mu_2\right)$ and the scale coefficient $\sigma_0$. 
However, a small upward bias appears for the degree of freedom $\nu$. This bias can be controlled by increasing the numerical precision and/or considering a larger value for $T_n$.

\begin{figure}[!htbp]
	\centering
		\includegraphics[width=1.00\textwidth]{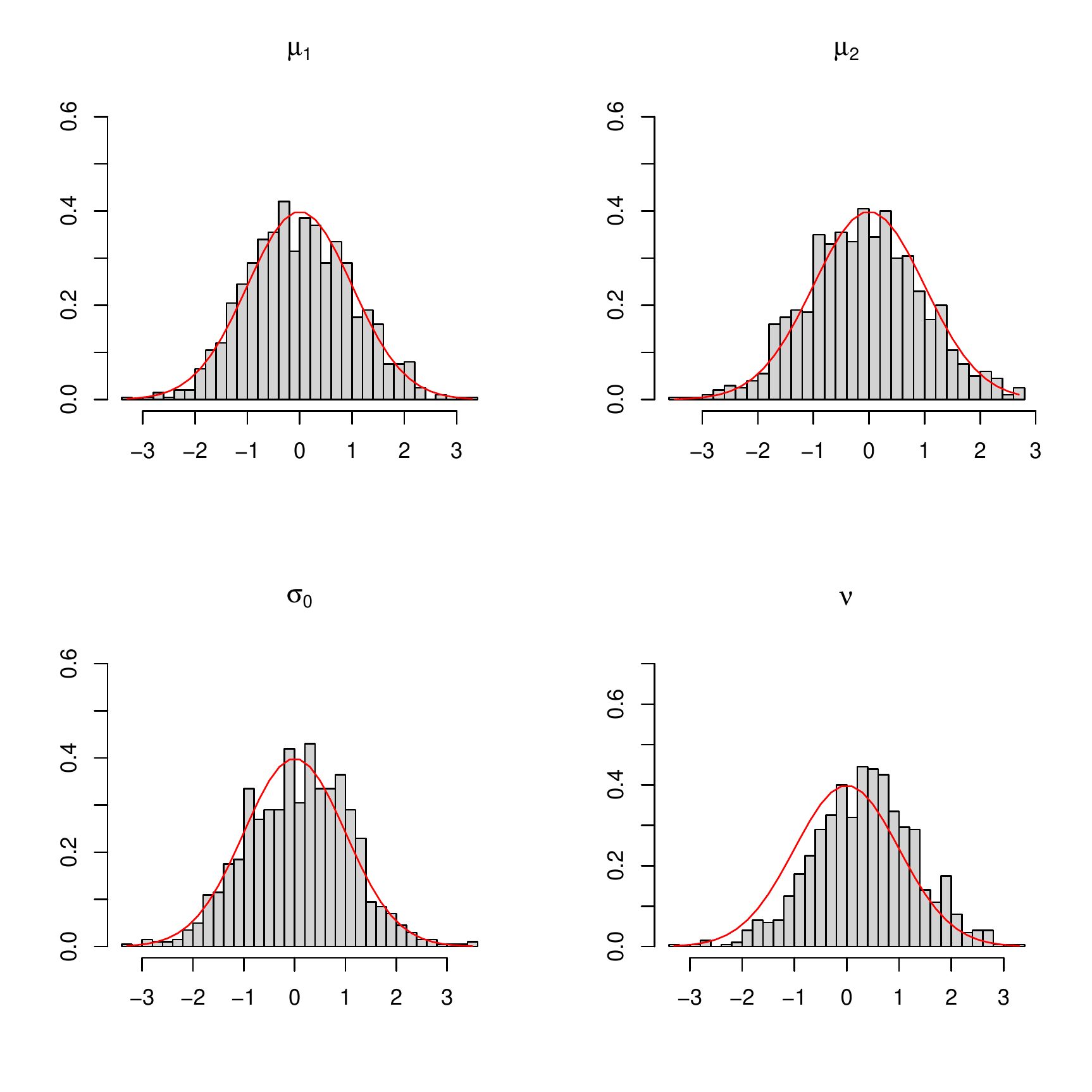}
	\caption{Simulated distribution of studentized parameters. In this case, we have $\mu_1=5$, $\mu_2=-1$, $\sigma_0 = 3$ and $\nu = 1$.}
	\label{fig:Case_h_0005_T_400_B_20_FFT_up_100_N_2_16_Ngrid1_5milion}
\end{figure}

In the second simulation exercise, we focus on the case of $\nu = 2$ and all other inputs remain unchanged. Figure \ref{fig:Case_h_1_over_300_T_400_B_20_FFT_30_N_2_rtp_16_Ngrid_10_rtp_6_nu_2} shows the histograms of the studentized estimators.
The comparison demonstrates a favorable agreement between the simulated density functions and the standard normal distribution for all model parameters. 
This comparison suggests that a larger value of the degree of freedom $\nu$ requires a smaller step-size value $h$ to ensure accurate approximations.
\begin{figure}[htbp]
	\centering
		\includegraphics[width=1.00\textwidth]{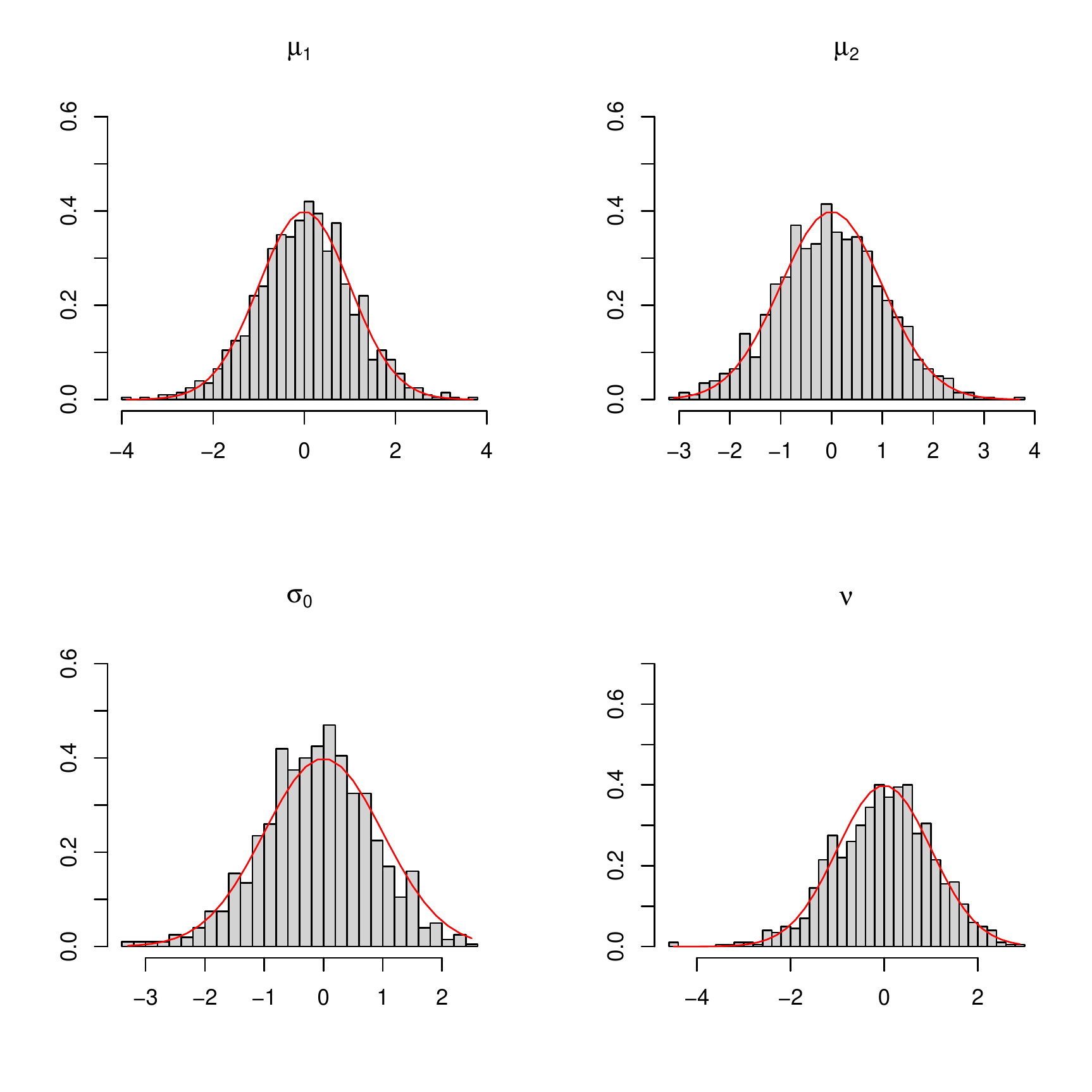}
	\caption{Simulated distribution of studentized parameters. In this case, we have $\mu_1=5$, $\mu_2=-1$, $\sigma_0 = 3$ and $\nu = 2$.}
	\label{fig:Case_h_1_over_300_T_400_B_20_FFT_30_N_2_rtp_16_Ngrid_10_rtp_6_nu_2}
\end{figure}

In conclusion, the analysis of estimator behavior through these simulation exercises provides valuable insights into their performance within the Student L\'evy Regression model. The results indicate satisfactory performance for most estimators, with minor biases observed for the degree of freedom parameter. Adjustments in numerical precision and step-time intervals can effectively control these biases. 
We refer to \cite{Hiroki2024Student} for further numerical experiments and real data analysis.

\section{Proofs}
\label{hm:sec_proofs}

\subsection{Proof of Theorem \ref{hm:thm_CQLA}}
\label{hm:sec_proof_thm1}

Let
\begin{align}
\mbby_{1,n}(a) &:= \frac{1}{N_n}\left( \mbbh_{1,n}(a) - \mbbh_{1,n}(a_0)\right), \nn\\
\Gam_{a,n} &:= -\frac{1}{N_n} \p_a^2\mbbh_{1,n}(a_0).
\nonumber
\end{align}
We will prove the following three lemmas 
\hmrrev{(recall \eqref{hm:def_new.Y1} for the definition of $\mbby_1(a)$)}.

\begin{lem}
\label{hm:cqmle_lem1}
Under \eqref{hm:B_order} and Assumption \ref{hm:A_X}, there exist 
constants $c_{0}>0$ and $c_{2}\in(0,1/2]$ such that for all $K>0$,
\begin{equation}
\sup_n \E\left[ \sup_{a}\big(N_n^{c_{2}}\left|\mbby_{1,n}(a)-\mbby_{1}(a)\right|\big)^K \right] < \infty.
\label{hm:QLA.thm1-3}
\end{equation}
\end{lem}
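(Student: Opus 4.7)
The plan is to decompose $\mbby_{1,n}(a) - \mbby_1(a)$ along the chain of natural approximations and control each piece uniformly in $a\in\overline{\Theta_\mu\times\Theta_\sig}$. Set $Z_j := \D_j J/h$; a direct computation shows that the summand of $N_n\mbby_{1,n}(a)$ equals $\log(\sig_0/\sig) - \log(1+\ep_j(a)^2) + \log(1+Z_j^2)$, while $\zeta(Z_j,X'_{t_{j-1}};a)$ is the same quantity with $\ep_j(a)$ replaced by the idealized error $\tilde\ep_j(a):=(\sig_0/\sig)Z_j - ((\mu-\mu_0)/\sig)X'_{t_{j-1}}$. Accordingly I would write
\begin{align*}
\mbby_{1,n}(a) - \mbby_1(a)
&= \frac{1}{N_n}\sumjj \left\{-\log(1+\ep_j(a)^2) + \log(1+\tilde\ep_j(a)^2)\right\}\\
&\quad + \frac{1}{N_n}\sumjj \left\{\zeta(Z_j,X'_{t_{j-1}};a) - \E[\zeta(Z_j,X'_{t_{j-1}};a)\,|\,\mcf_{t_{j-1}}]\right\}\\
&\quad + \frac{1}{N_n}\sumjj \left\{\E[\zeta(Z_j,X'_{t_{j-1}};a)\,|\,\mcf_{t_{j-1}}] - f'(X'_{t_{j-1}};a)\right\}\\
&\quad + \left\{\frac{1}{N_n}\sumjj f'(X'_{t_{j-1}};a) - \mbby_1(a)\right\}
\end{align*}
and treat the four lines in turn.

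Line (I), the plug-in error from replacing $\D_j X/h$ by $X'_{t_{j-1}}$ in $\ep_j(a)$, is bounded by a mean-value argument exploiting the boundedness of the derivative of $x\mapsto \log(1+x^2)$; combined with \eqref{hm:A_X-1} this gives an $L^K(\pr)$-bound of order $h^{c_X}$. Line (II) is a martingale-difference sum with respect to $(\mcf_{t_j})$, since $J$ has independent increments and $X'$ is $(\mcf_t)$-adapted; the Burkholder-Davis-Gundy inequality together with the at-most-logarithmic-in-$|y|$ growth of $\zeta(y,x';a)$ and the moment bounds on $X'_{t_{j-1}}$ from Assumption \ref{hm:A_X}.1 yield the rate $N_n^{-1/2}$. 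Line (III) is the accumulated local-Cauchy approximation error: its conditional expectation equals $\int \zeta(y,X'_{t_{j-1}};a)\,\{p_{Z_j}(y)-\phi_1(y)\}\,dy$ with $p_{Z_j}$ the density \eqref{hm:ff_def_pre}, and Lemma \ref{hm:lem_loc.lim.thm} supplies a polynomial rate $h^\alpha$ for the $L^1(dy)$-closeness against the slowly-growing integrand, so the line is $O(h^\alpha)$ uniformly in $a$. Line (IV) is handled directly by the quantitative ergodicity hypothesis \eqref{hm:A_X-3} applied with $f=f'(\cdot;a)$, producing the rate $N_n^{-c_X'}$.

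To upgrade these pointwise-in-$a$ estimates into the uniform bound \eqref{hm:QLA.thm1-3}, I would invoke the Sobolev embedding on the bounded convex domain $\Theta_\mu\times\Theta_\sig\subset\mbbr^{q+1}$: for $K$ larger than $q+1$, $\sup_a|g(a)|^K\lesssim\int (|g(a)|^K+|\p_a g(a)|^K)\,da$, so it suffices to obtain analogous $L^K$-bounds for $\p_a(\mbby_{1,n}-\mbby_1)$, and the same decomposition works with $\zeta$ and $f'$ replaced by $\p_a\zeta$ and $\p_a f'$ (whose continuity in $a$ and polynomial growth in $(y,x')$ are inherited from the smoothness of the Cauchy density); smaller $K$ is recovered by Jensen's inequality. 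The main obstacle is Line (III): the per-observation Cauchy approximation error is summed over $N_n\asymp nB_n$ terms, so a positive rate $c_2$ survives only because the upper bound $B_n\lesssim n^{1-\ep'}$ in \eqref{hm:B_order} forces $N_n^{c_2}h^\alpha$ to remain bounded for $c_2$ sufficiently small; balancing against Lines (I), (II), and (IV) fixes the final $c_2\in(0,1/2]$ as the minimum of quantities built from $c_X$, $1/2$, $\alpha$, $c_X'$, and $\ep'$.
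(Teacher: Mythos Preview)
Your proposal is correct and follows essentially the same route as the paper's proof: the four-term decomposition matches the paper's $\del_{11,n}$, martingale part, $\del_{12,n}$, and $\mbby_{1,n}^{\ast\ast}-\mbby_1$, and the tools invoked for each (boundedness of $g_1$, Burkholder, Lemma~\ref{hm:lem_loc.lim.thm}, Assumption~\ref{hm:A_X}.3, Sobolev embedding) are identical. The only organizational difference is that you apply Sobolev once to the full difference, whereas the paper applies it selectively to Lines~(II) and~(IV) after bounding Lines~(I) and~(III) uniformly in $a$ directly; both are equivalent.
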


\begin{lem}
\label{hm:cqmle_lem2}
Under \eqref{hm:B_order} and Assumption \ref{hm:A_X}, there exist a constant $c_{1}\in(0,1/2)$ such that for every $K>0$,
\begin{align}
& \sup_n \E\left[|\D_{a,n}|^K\right] +
\sup_n \E \left[ \left(N_n^{c_{1}}\left|\Gam_{a,n} - \Gam_{a,0} \right|\right)^{K} \right] 
\nn\\
&{}\qquad + \sup_n \E\left[\sup_{a}\left| \frac{1}{N_n}\p_{a}^{3}\mbbh_{1,n}(a)\right|^K\right]
< \infty.
\label{hm:cqmle_lem2-1}
\end{align}
\end{lem}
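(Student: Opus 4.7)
The three claims concern the moments of the normalized score $\D_{a,n}$, the rate at which the normalized observed information $\Gam_{a,n}$ converges to $\Gam_{a,0}$, and a uniform-in-$a$ moment bound on the normalized third derivative of $\mbbh_{1,n}$. The common starting point is an explicit computation: writing $a=(\mu,\sig)$, each derivative of $\log(1+\ep_j(a)^2)$ in $a$ is of the form $R(\ep_j(a))\cdot P(h^{-1}\D_j X,\sig^{-1})$, where $R$ is a bounded rational function of the type $\ep_j/(1+\ep_j^2)$ or $\ep_j^2/(1+\ep_j^2)$ (and higher-order analogues, all bounded) and $P$ is polynomial in $(h^{-1}\D_j X,\sig^{-1})$ of fixed degree. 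By Assumption \ref{hm:A_X}.1--\ref{hm:A_X}.2 one has $\sup_{n,j}\E[|h^{-1}\D_j X|^K]<\infty$ for every $K>0$, so each summand of $\p_a^k\mbbh_{1,n}(a)$ ($k=1,2,3$) is $L^K(\pr)$-bounded uniformly in $a$ over the bounded parameter domain. The third-derivative claim then follows by absorbing the supremum in $a$ via the Sobolev inequality, using a pointwise $L^K$-bound on the $a$-gradient produced by the same calculation.

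For the score $\D_{a,n}=N_n^{-1/2}\sum_{j=1}^{N_n}\psi_{a,j}(a_0)$ with $\psi_{a,j}(a_0):=-\p_a\{\log\sig+\log(1+\ep_j(a)^2)\}|_{a=a_0}$, the plan is to split $\psi_{a,j}(a_0)=M_j+R_j$ into the $(\mcf_{t_j})$-martingale difference $M_j:=\psi_{a,j}(a_0)-\E[\psi_{a,j}(a_0)\mid\mcf_{t_{j-1}}]$ and the predictable remainder $R_j:=\E[\psi_{a,j}(a_0)\mid\mcf_{t_{j-1}}]$. Burkholder--Davis--Gundy together with the uniform $L^K$ bound on each $M_j$ gives $\E[|N_n^{-1/2}\sum_j M_j|^K]\lesssim 1$. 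For the predictable sum, I rewrite the $\mu$-component by $h^{-1}\D_j X=X'_{t_{j-1}}+(h^{-1}\D_j X-X'_{t_{j-1}})$: the first piece produces a term proportional to $X'_{t_{j-1}}\E[(\p\phi_1/\phi_1)(h^{-1}\D_j J)\mid\mcf_{t_{j-1}}]$, whose magnitude after summation and normalization by $\sqrt{N_n}$ is controlled by the Cauchy-approximation error through the local limit theorem (Lemma \ref{hm:lem_loc.lim.thm}); the cross piece is exactly the quantity that the last display of Assumption \ref{hm:A_X}.2 forces to vanish faster than $N_n^{-1/2}$ in $L^K$. An analogous split disposes of the $\sig$-component, so the bias part is $o(1)$ in $L^K$, yielding the uniform $L^K$-bound on $\D_{a,n}$.

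The rate of $\Gam_{a,n}-\Gam_{a,0}$ in $L^K$ follows from a three-stage decomposition of $-N_n^{-1}\p_a^2\mbbh_{1,n}(a_0)$. Stage (i): replace the $\ep_j(a_0)$-dependent rational factors by their conditional expectation under $\mcl(h^{-1}\D_j J\mid\mcf_{t_{j-1}})$, with martingale-type error of order $N_n^{-1/2}$ in $L^K$ by BDG. Stage (ii): replace that conditional expectation by its value under the standard Cauchy, incurring the Cauchy-approximation error of polynomial rate $h^{c}$ from Lemma \ref{hm:lem_loc.lim.thm}. Stage (iii): after replacing $h^{-1}\D_j X$ by $X'_{t_{j-1}}$ (Assumption \ref{hm:A_X}.2 gives error $h^{c_X}$ in $L^K$), apply Assumption \ref{hm:A_X}.3 to the resulting $X'_{t_{j-1}}$-averages at rate $N_n^{-c'_X}$. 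Taking $c_1\in(0,1/2)$ to be the minimum of $c'_X$ and the rates produced by stages (i)--(ii) translated via \eqref{hm:B_order} yields the stated moment bound.

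The main obstacle is the bias analysis in $N_n^{-1/2}\sum_j R_j$: the Cauchy approximation is only accurate in the small-$h$ regime, yet its error accumulates over $N_n$ summands, so the sampling-design restriction $B_n\lesssim n^{1-\ep'}$ in \eqref{hm:B_order} is used essentially to compensate. The same accumulation is what forces $c_1$ to be strictly below $1/2$. Once this balance is fixed, the remaining work is routine bookkeeping with BDG, Sobolev embedding for the uniform-in-$a$ supremum, and the ergodic-rate input supplied by Assumption \ref{hm:A_X}.3.
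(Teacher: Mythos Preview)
Your proposal is correct and uses the same ingredients as the paper: explicit derivative formulas (bounded rational functions of $\ep_j$ times polynomials in $h^{-1}\D_j X$ and $\sig^{-1}$), Burkholder for the martingale fluctuations, Lemma~\ref{hm:lem_loc.lim.thm} for the Cauchy-approximation bias, Assumption~\ref{hm:A_X}.2 for the covariate replacement $h^{-1}\D_j X\to X'_{t_{j-1}}$, and Assumption~\ref{hm:A_X}.3 for the ergodic rate, with \eqref{hm:B_order} balancing the accumulated bias.

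Two points of comparison. For $\Gam_{a,n}$ the paper reverses your stages (i) and (iii): it replaces $h^{-1}\D_j X$ by the $\mcf_{t_{j-1}}$-measurable $X'_{t_{j-1}}$ \emph{first}, and only then centers the $\ep_j$-factors and applies the local limit theorem. That order is cleaner because in your ordering the stage-(i) ``error'' $(h^{-1}\D_j X)^{\otimes 2}\{\p g_1(\ep_j)-\E[\p g_1(\ep_j)]\}$ is not an $(\mcf_{t_j})$-martingale increment when $X$ and $J$ may be dependent (the covariate factor is not predictable), so a further compensation is needed before BDG applies; the residual compensator is then again controlled by the $R_{n,j}$-estimates of Assumption~\ref{hm:A_X}.2, so your route still closes, just with one extra step. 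For the third-derivative bound the paper avoids Sobolev altogether: since the rational factors are bounded and $\sig$ ranges over a compact set away from $0$, one has directly $N_n^{-1}\sup_a|\p_a^3\mbbh_{1,n}(a)|\lesssim N_n^{-1}\sumjj(1+|h^{-1}\D_j X|^C)$ uniformly in $a$, and the supremum is free.
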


\begin{lem}
\label{hm:cqmle_lem3}
Under \eqref{hm:B_order} and Assumption \ref{hm:A_X}, we have $\left(\Delta_{a,n},\, \Gam_{a,n}\right) \cil \big( \Gam_{a,0}^{1/2}\eta,\, \Gam_{a,0}\big)$, where $\eta \sim N_{q+1}(0,I_{q+1})$.
\end{lem}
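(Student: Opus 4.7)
Joint convergence reduces to the marginal CLT $\D_{a,n}\cil N(0,\Gam_{a,0})$ via Slutsky, since Lemma \ref{hm:cqmle_lem2} already furnishes $\Gam_{a,n}\cip\Gam_{a,0}$ with a deterministic limit (so the random and deterministic coordinates converge jointly). Direct differentiation at $a=a_0$ gives, writing $\xi_j:=\D_j J/h$,
\begin{equation}
\D_{a,n} = \frac{1}{\sqrt{N_n}} \sum_{j=1}^{N_n} \chi_{n,j},\quad \chi_{n,j} := \frac{1}{\sig_0}\left(\frac{\D_j X}{h}\cdot\frac{2\xi_j}{1+\xi_j^2},\; \frac{\xi_j^2-1}{1+\xi_j^2}\right),
\nonumber
\end{equation}
and each $\chi_{n,j}$ is $\mcf_{t_j}$-measurable, so I would apply a martingale CLT to the compensated array $\tilde{\chi}_{n,j}:=\chi_{n,j}-E[\chi_{n,j}\mid\mcf_{t_{j-1}}]$.

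For the bias $N_n^{-1/2}\sum_j E[\chi_{n,j}\mid\mcf_{t_{j-1}}]$, I would split the $\mu$-component via $\D_j X/h = X'_{t_{j-1}} + (\D_j X/h - X'_{t_{j-1}})$. Since $J$ is an $(\mcf_t)$-L\'evy process, $\xi_j$ is independent of $\mcf_{t_{j-1}}$ and the $X'_{t_{j-1}}$-piece contributes $\sig_0^{-1}X'_{t_{j-1}}\,E[2\xi_j/(1+\xi_j^2)]$, which vanishes because the Student-$t$ distribution is symmetric (hence so is the law of $\xi_j$); the remainder piece is exactly what the last display in Assumption \ref{hm:A_X}.2 drives to zero at the $\sqrt{N_n}$-rate. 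For the $\sig$-component, $E[\chi_{n,j}^{\sig}\mid\mcf_{t_{j-1}}] = \sig_0^{-1}\{1-2E[(1+\xi_j^2)^{-1}]\}$, which is a local-Cauchy approximation error since $E_{t_1}[(1+Y^2)^{-1}]=1/2$, and can be controlled via the $L^1$-local limit theorem (Lemma \ref{hm:lem_loc.lim.thm}) together with the rate restriction \eqref{hm:B_order}.

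Then I would verify the martingale CLT (e.g.\ \cite[Thm.\ VII.8.2]{Shi95}) for $N_n^{-1/2}\sum_j\tilde{\chi}_{n,j}$. The Lyapunov condition is immediate from the uniform boundedness of $y\mapsto 2y/(1+y^2)$ and $y\mapsto (y^2-1)/(1+y^2)$ combined with $\sup_{n,j}E[|\D_j X/h|^K]<\infty$ from Assumption \ref{hm:A_X}.1--2. The quadratic characteristic converges as
\begin{equation}
\frac{1}{N_n}\sum_{j=1}^{N_n} E\left[\tilde{\chi}_{n,j}^{\otimes 2}\,\middle|\,\mcf_{t_{j-1}}\right] \cip \diag\left(\frac{1}{2\sig_0^2}\,S_0,\;\frac{1}{2\sig_0^2}\right) = \Gam_{a,0},
\nonumber
\end{equation}
using the standard Cauchy-Fisher identities $E_{t_1}[(2Y/(1+Y^2))^2]=E_{t_1}[((Y^2-1)/(1+Y^2))^2]=1/2$, the parity-induced vanishing of the cross moment, the local Cauchy limit for replacing $\xi_j$-moments by $t_1$-moments, the replacement of $\D_j X/h$ by $X'_{t_{j-1}}$ via Assumption \ref{hm:A_X}.2, and Assumption \ref{hm:A_X}.3 applied to $f(x')=x^{\prime\otimes 2}$ to yield $N_n^{-1}\sum_j(X'_{t_{j-1}})^{\otimes 2}\cip S_0$.

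The main obstacle is controlling the \emph{accumulated} local-Cauchy approximation error: each scalar conditional expectation such as $E[(\xi_j^2-1)/(1+\xi_j^2)]$ is only small, not zero, and after summation the bias gets amplified by a factor of order $\sqrt{N_n}$. This is the quantitative content of the upper bound on $B_n$ in \eqref{hm:B_order}, tuned so that the per-step local-limit error from Lemma \ref{hm:lem_loc.lim.thm} absorbs this amplification; the covariate-side condition in the last display of Assumption \ref{hm:A_X}.2 is set at precisely the same $\sqrt{N_n}$ rate to ensure bias cancellation in the $\mu$-score even when $X'$ and $J$ are stochastically dependent.
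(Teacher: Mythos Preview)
Your proposal is correct and follows essentially the same route as the paper: reduce joint convergence to $\D_{a,n}\cil N(0,\Gam_{a,0})$ via $\Gam_{a,n}\cip\Gam_{a,0}$ from Lemma~\ref{hm:cqmle_lem2}, split off the $R_{n,j}$-term via Assumption~\ref{hm:A_X}.2, and apply the martingale CLT with quadratic characteristic computed through Lemma~\ref{hm:lem_loc.lim.thm} and Assumption~\ref{hm:A_X}.3. One minor variation: you invoke the symmetry of $\mcl(h^{-1}J_h)$ to kill $\E[2\xi_j/(1+\xi_j^2)]$ exactly, whereas the paper uses Lemma~\ref{hm:lem_loc.lim.thm} to approximate it by the Cauchy integral $\int g_1\phi_1=0$; your shortcut is valid and slightly cleaner.
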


\medskip

Theorem \ref{hm:thm_CQLA} follows from the above lemmas and \eqref{hm:QLA.thm1-4}:
with Lemmas \ref{hm:cqmle_lem1} and \ref{hm:cqmle_lem2}, we can establish the tail-probability estimate \eqref{hm:CQMLE.tpe} through \cite[Theorem 3(c)]{Yos11}; moreover, the asymptotic normality \eqref{hm:CQMLE.an} follows from the standard likelihood-asymptotics argument together with Lemmas \ref{hm:cqmle_lem3}.
We omit the details.

By \eqref{hm:ff_def_pre}, the probability density of $\mcl(h^{-1}J_h)$ is given by
\begin{align}
f_h(y;\nu) &= \frac{1}{2\pi}\int \cos(uy)\left\{\vp_{J_1,\nu}(h^{-1}u)\right\}^h
du
\nn\\
&= \left(\frac{2^{1-\nu/2} h^{-\nu/2}}{\Gam(\nu/2)} \right)^h
\frac{h}{\pi}\int_0^\infty \cos(uh x) u^{\nu h/2} \left(K_{\nu/2}(u)\right)^h du,
\label{hm:ff_def}
\end{align}
the existence of which is ensured by \cite[Proposition 1]{BerDon97} together with the locally Cauchy property $h^{-1}J_h \cil t_1$.
The next lemma quantifies the speed of the local Cauchy approximation in $L^1(dy)$ and serves as a basic tool for deriving limit theorems.

\begin{lem}
\label{hm:lem_loc.lim.thm}
For any $r>0$ and any measurable function $\zeta:\mbbr\to\mbbr$ such that
$|\zeta(y)| \lesssim 1+ \{\log(1+|y|)\}^K$ for some $K>0$, we have
\begin{equation}
\int \left| \zeta(y) \right| \left| f_h(y) - \phi_1(y) \right| dy \lesssim h^{1-r}.
\label{hm:lem_loc.lim.thm-1}
\end{equation}
In particular, under \eqref{hm:B_order} we have
\begin{equation}
\sqrt{N_n} \int \left| \zeta(y) \right| \left| f_h(y) - \phi_1(y) \right| dy \to 0.
\label{hm:lem_loc.lim.thm-2}
\end{equation}
\end{lem}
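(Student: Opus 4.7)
The proof is a quantitative local limit theorem for the density $f_h$ of $h^{-1}J_h$, obtained via Fourier analysis. Writing $\varphi_h(u):=\{\vp_{J_1,\nu}(u/h)\}^h$ for the characteristic function of $h^{-1}J_h$ and combining \eqref{hm:ff_def} with the Cauchy identity $\phi_1(y)=\frac{1}{2\pi}\int e^{-iuy}e^{-|u|}du$, we have the representation
\[
f_h(y)-\phi_1(y)=\frac{1}{2\pi}\int_{\mbbr} e^{-iuy}\Phi_h(u)\,du,\qquad \Phi_h(u):=\varphi_h(u)-e^{-|u|}.
\]
The crucial pointwise estimate on $\Phi_h$ comes from substituting the large-argument asymptotic $K_{\nu/2}(z)=\sqrt{\pi/(2z)}\,e^{-z}\{1+O(1/z)\}$ into \eqref{hm:t_nu(0,1)-CF} and expanding $\log\varphi_h(u)=h\log\vp_{J_1,\nu}(u/h)$, refining the heuristic behind \eqref{hm:Cauchy.approx_CF}. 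This yields, for sufficiently small $h$,
\[
|\Phi_h(u)|\lesssim h|\log h|\bigl(1+|\log|u||\bigr)e^{-|u|}\quad\text{on }\{|u|>h\},
\]
while on $\{|u|\le h\}$ the cancellation $\Phi_h(0)=0$ together with Taylor expansions of $\varphi_h$ and $e^{-|u|}$ near $u=0$ gives the refinement $|\Phi_h(u)|\lesssim h$, much sharper than the trivial $\le 2$. Integrating in $u$ then produces $\|\Phi_h\|_{L^1(du)}\lesssim h|\log h|$, and hence $\|f_h-\phi_1\|_{L^\infty}\lesssim h|\log h|$ by Fourier inversion.

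To upgrade the uniform bound to \eqref{hm:lem_loc.lim.thm-1}, I would extract polynomial decay of $|f_h-\phi_1|(y)$ in $|y|$ by one integration-by-parts in the Fourier integral. Writing $\Phi_h'(u)=\varphi_h'(u)+\mathrm{sgn}(u)e^{-|u|}$ and exploiting the large-argument asymptotic of $(\log\vp_{J_1,\nu})'$, one checks that $|\Phi_h'(u)|\lesssim\{h|\log h|(1+\log_+|u|)+h/|u|\}e^{-|u|}$ on $\{|u|>h\}$ and is of order $1$ on $\{|u|\le h\}$, so that $\|\Phi_h'\|_{L^1}\lesssim h|\log h|$. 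One integration by parts then yields $(1+|y|)\,|f_h-\phi_1|(y)\lesssim h|\log h|$. Coupling this with a uniform Cauchy-type tail $\pr(|h^{-1}J_h|>N)\lesssim 1/N$ for $N\ge 1$ (which follows from the uniform Fourier-side closeness of $\varphi_h$ to $e^{-|u|}$), I would split $\int|\zeta(y)||f_h-\phi_1|dy$ at a threshold $N=N_h$: the bulk $\{|y|\le N\}$ contributes at most $\|(1+|y|)(f_h-\phi_1)\|_\infty\cdot\int_{|y|\le N}|\zeta|/(1+|y|)dy\lesssim h|\log h|\cdot(\log N)^{K+1}$, while the tail $\{|y|>N\}$ contributes at most $\int_{|y|>N}|\zeta|(f_h+\phi_1)dy\lesssim(\log N)^K/N$. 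Choosing $N_h\sim 1/(h|\log h|)$ balances these at order $h|\log h|^{K+2}\lesssim h^{1-r}$ for any $r>0$, giving \eqref{hm:lem_loc.lim.thm-1}.

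Finally, \eqref{hm:lem_loc.lim.thm-2} is immediate from \eqref{hm:B_order}: with $h=1/n$ and $B_n\lesssim n^{1-\ep'}$,
\[
\sqrt{N_n}\,h^{1-r}\lesssim n^{1-\ep'/2}\cdot n^{r-1}=n^{r-\ep'/2}\to 0
\]
for any $r\in(0,\ep'/2)$, and since $r>0$ may be chosen arbitrarily small in the first part, the conclusion follows. The principal technical obstacle is the cusp of $e^{-|u|}$ at $u=0$ together with the possible non-differentiability of $\varphi_h$ there (notably for $\nu\le 1$), which prevents a clean second integration-by-parts in $u$ and forces the splitting/tail argument above; the accumulated logarithmic losses are precisely what compel the statement to allow an arbitrarily small $r>0$ in the exponent $h^{1-r}$ rather than the idealised rate $h$.
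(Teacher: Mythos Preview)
Your skeleton is essentially the paper's: Fourier inversion, an $L^1$-bound $\|\varphi_h-e^{-|\cdot|}\|_{L^1}\lesssim h\log(1/h)$ giving $\|f_h-\phi_1\|_\infty\lesssim h\log(1/h)$, then a decay-in-$y$ estimate via differentiation in $u$, and finally a bulk/tail split optimised in the cut-off. The paper obtains the derivative control by writing $\partial_u\log\varphi_h(u)+1=1-K_{\nu_0/2-1}(u/h)/K_{\nu_0/2}(u/h)=:\xi_{\nu_0/2}(u/h)$ and showing $\sup_{x>0}|x\,\xi_{\nu_0/2}(x)|<\infty$, which is the exact analogue of your integration-by-parts step; so the two routes are close variants.

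There is, however, a genuine gap in your tail step when $\nu_0<1$ (the lemma covers all $\nu_0>0$). Your claimed uniform bound $P(|h^{-1}J_h|>N)\lesssim 1/N$ is false in that regime: already for $h=1$ one has $J_1\sim t_{\nu_0}$ with tail $\asymp N^{-\nu_0}$, and ``Fourier-side closeness to $e^{-|u|}$'' does not repair this. The paper instead proves the uniform density bound $\sup_{h\in(0,1]}f_h(y)\lesssim |y|^{-(1+1\wedge\nu_0)+\epsilon}$ (equivalently $P(|h^{-1}J_h|>N)\lesssim N^{-(1\wedge\nu_0)+\epsilon}$), which requires a separate argument via the L\'evy measure and moment criteria. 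With this correct tail, your balance must use $N_h\sim h^{-1/(\nu_0-\epsilon)}$ rather than $N_h\sim 1/(h|\log h|)$ when $\nu_0<1$; fortunately your bulk term $h|\log h|\,(\log N)^{K+1}$ grows only polylogarithmically in $N$, so the final bound is still $h|\log h|^{K+2}\lesssim h^{1-r}$ after this correction. A minor related inaccuracy: for $\nu_0<1$ your claim that $\Phi_h'$ is ``of order $1$'' on $\{|u|\le h\}$ fails pointwise---one has $\partial_u\log\varphi_h(u)\sim -(u/h)^{\nu_0-1}$ near $0$---though $\int_{|u|\le h}|\Phi_h'|\,du\lesssim h$ survives, which is all you need.
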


Related remarks on the estimate under the total variation distance can be found in \cite[Remark 2.1]{CleGlo20}.

\subsubsection{Proof of Lemma \ref{hm:cqmle_lem1}}
\label{hm:sec_proof_C.lem1}

\hmrev{
Let $\ep_j=\ep_{n,j}:=\ep_j(a_0)$, so that $\ep_1,\ep_2\dots\sim\text{i.i.d.}~\mcl(h^{-1}J_h)$, which admits the density $f_h(y)$ defined by \eqref{hm:ff_def};
this may be an abuse of notation (recall $\ve_i=J_i - J_{i-1}$ introduced in Section \ref{hm:sec_tQLA-add1}), but no confusion is likely to arise.
}
We have
\begin{align}
\ep_j(a) 
&= \frac{\sig_0}{\sig} \ep_j - \frac{1}{\sig} (\mu-\mu_0)\cdot X'_{t_{j-1}} - \frac{1}{\sig}(\mu-\mu_0)\cdot R_{n,j},
\nonumber
\end{align}
where $R_{n,j}:= h^{-1}(\D_j X  - h X'_{t_{j-1}})$.
Let $g_1(\ep):=\p_{\ep}\phi_1(\ep)/\phi_1(\ep)$ and recall the definition \eqref{hm:zeta_def} of $\zeta(y,x';\,a)$.
We have the decomposition $\mbby_{1,n}(a)=\mbby_{1,n}^\ast(a)+\del_{11,n}(a)$ where
\begin{align}
\mbby_{1,n}^\ast(a) &= \frac{1}{N_n}\sumjj \zeta(\ep_j,X'_{t_{j-1}};\,a),
\label{hm:def_del.Y.ast}\\
\del_{11,n}(a) &:= - 
\frac{1}{N_n}\sumjj
\int_0^1 g_1\left( \frac{\sig_0}{\sig}\ep_j - \frac{1}{\sig}(\mu-\mu_0)\cdot X'_{t_{j-1}}
-\frac{s}{\sig}(\mu-\mu_0)\cdot R_{n,j} \right) ds 
\nn\\
&{}\qquad \times \frac{1}{\sig}(\mu-\mu_0)\cdot R_{n,j}.
\label{hm:def_del11a}
\end{align}

Fix any $K>2$. 
By Assumption \ref{hm:A_X} and \eqref{hm:B_order}, we can pick a sufficiently small $c_2\in(0,\frac{c_X}{2-\ep'}\wedge\frac12]$ to obtain
\begin{equation}
\max_{j\le N_n}\E[|N_n^{c_2}R_{n,j}|^K] 
\lesssim \max_{j\le N_n}\E[|n^{c_2(2-\ep')}R_{n,j}|^K] \le \max_{j\le [n T_n]}\E[|n^{c_2(2-\ep')} R_{n,j}|^K] \to 0.
\nonumber
\end{equation}
Since $g_1$ is essentially bounded, we have
\begin{equation}
\E\left[\sup_a |N_n^{c_2} \del_{11,n}(a)|^K\right] \lesssim \max_{j\le N_n}\E[|N_n^{c_2}R_{n,j}|^K] 
\to 0.
\nonumber
\end{equation}
Write $\E^{j-1}[\cdot]=E[\cdot|\mcf_{t_{j-1}}]$. Then,
\begin{align}
\mbby_{1,n}^\ast(a) 
&= \frac{1}{\sqrt{N_n}} \sumjj \frac{1}{\sqrt{N_n}}\widetilde{\zeta}_j(a) + \frac{1}{N_n}\sumjj \E^{j-1}\big[\zeta(\ep_j,X'_{t_{j-1}};\,a)\big],
\label{hm:cqmle_lem1-p1}
\end{align}
where $\widetilde{\zeta}_j(a) := \zeta(\ep_j,X'_{t_{j-1}};\,a) - \E^{j-1}[\zeta(\ep_j,X'_{t_{j-1}};\,a)]$.
Obviously,
\begin{equation}
\sup_n \sup_a \left(\E\left[\left|\sumjj \frac{1}{\sqrt{N_n}} 
\widetilde{\zeta}_j(a)
\right|^K\right]
+ \E\left[\left|\sumjj \frac{1}{\sqrt{N_n}} 
\p_a \widetilde{\zeta}_j(a)
\right|^K\right] \right) < \infty.
\nonumber
\end{equation}
Since $\Theta$ is assumed to be a bounded convex domain, the Sobolev inequality \cite{AdaFou03} holds:
\begin{equation}
\sup_a |F(a)|^r \lesssim \int_{\Theta_\mu\times\Theta_\sig}\left( |F(a)|^r+|\p_a F(a)|^r \right) da
\nonumber
\end{equation}
for any $\mcc^1$-function $F$ and $r>\mathrm{dim}(a)=q+1$ (see \cite{AdaFou03} for details).
Applying this and Fubini's theorem, we conclude that
\begin{equation}
\sup_n \E\left[\sup_a \left| \sumjj \frac{1}{\sqrt{N_n}}
\widetilde{\zeta}_j(a)
\right|^K \right] < \infty,
\label{hm:cqmle_lem1-p2}
\end{equation}
followed by
\begin{equation}
\sup_n \E\left[ \sup_{a}\left(N_n^{c_{2}}\left|\mbby_{1,n}(a) - \frac{1}{N_n}\sumjj 
\E^{j-1}\big[\zeta(\ep_j,X'_{t_{j-1}};\,a)\big]
\right|\right)^K \right] < \infty.
\nn
\end{equation}

Observe that
\begin{align}
& \frac{1}{N_n}\sumjj 
\E^{j-1}\big[\zeta(\ep_j,X'_{t_{j-1}};\,a)\big]
\nn\\
&= \frac{1}{N_n}\sumjj 
\hmrrev{
f'(X'_{t_{j-1}};a)
}
+ \frac{1}{N_n}\sumjj \int \zeta(y,X'_{t_{j-1}};a) (f_h(y) - \phi_1(y))dy
\nn\\
&=: \mbby^{\ast\ast}_{1,n}(a) + \del_{12,n}(a).
\label{hm:cqmle_lem2-p1}
\end{align}
Note that $\sup_a |\zeta(y,X'_{t_{j-1}};a)| \lesssim 1+\log(1+|X'_{t_{j-1}}|) + \log(1+|y|) \lesssim \{1+\log(1+|X'_{t_{j-1}}|)\} \{1 + \log(1+|y|)\}$.
Also, $N_n^{c_2} h^{1-c}\lesssim n^{c_2(2-\ep')}n^{c-1} \to 0$ with $c_2,c>0$ small enough.
It follows from Lemma \ref{hm:lem_loc.lim.thm} and Assumption \ref{hm:A_X} that
\begin{align}
\E\left[\sup_a |N_n^{c_2} \del_{12,n}(a)|^K\right] 
&\lesssim \max_{j\le N_n}\E\left[\left| (1+|X'_{t_{j-1}}|)^C N_n^{c_2} h^{1-c} \right|^K \right]
\nn\\
&\lesssim (N_n^{c_2} h^{1-c})^K \sup_{0\le t\le h N_n}\E\left[(1+|X'_{t_{j-1}}|)^C\right] \to 0.
\nonumber
\end{align}
\hmrrev{
Moreover, since
$\max_{k\in\{0,1\}}\sup_a |\p_{a}^k f'(x';a)| \lesssim 1 + |x|^C$, Assumption \ref{hm:A_X}.3 ensures that
\begin{equation}
\max_{k\in\{0,1\}}\sup_{n} \sup_a \E\left[\left| N_n^{c_X'}\left(
\p_a^k \mbby^{\ast\ast}_{1,n}(a)
- \int \psi_{\p_a^k f'(\cdot;a)} (z)\pi_0(dz)\right)\right|^K \right] < \infty
\nn%
\end{equation}
for every $K>0$.
}

By piecing together what we have seen 
\hmrev{with the Sobolev inequality as before, we obtain the following estimate with a sufficiently small $c_2>0$:}
\begin{align}
& \sup_n \E\left[ \sup_{a}\big(N_n^{c_{2}}\left|\mbby_{1,n}(a)-\mbby_{1}(a)\right|\big)^K \right]
\nn\\
&\lesssim 1 + \sup_n \E\left[ \sup_{a}\big(N_n^{c_{2}}\left|\mbby^{\ast\ast}_{1,n}(a)-\mbby_{1}(a)\right|\big)^K \right]
\lesssim 1.
\label{hm:estimates+1}
\end{align}
\subsubsection{Proof of Lemma \ref{hm:cqmle_lem2}}
\label{hm:cqmle_lem2_proof}

We will look at the three terms in \eqref{hm:cqmle_lem2-1} separately.

\medskip

\paragraph{Quasi-score function}

Recall that $\ep_j=\ep_j(a_0) \sim \text{i.i.d.}~\mcl(h^{-1}J_h)$. Direct calculations give
\begin{align}
\D_{a,n}
&= -\sig_0^{-1}\frac{1}{\sqrt{N_n}}\sumjj 
\begin{pmatrix}
(h^{-1} \D_j X) g_1(\ep_j) \\
1+\ep_j g_1(\ep_j)
\end{pmatrix}
\nn\\
&= -\sig_0^{-1}
\frac{1}{\sqrt{N_n}}\sumjj 
\begin{pmatrix}
X'_{t_{j-1}}g_1(\ep_j)
\\
1+\ep_j g_1(\ep_j)
\end{pmatrix}
-\sig_0^{-1}
\frac{1}{N_n}\sumjj 
\begin{pmatrix}
\sqrt{N_n}R_{n,j}\,g_1(\ep_j)
\\
0
\end{pmatrix}.
\label{hm:Da-p1}
\end{align}

The first term in \eqref{hm:Da-p1} equals
\begin{align}
& -\sig_0^{-1}
\frac{1}{\sqrt{N_n}}\sumjj 
\begin{pmatrix}
X'_{t_{j-1}}(g_1(\ep_j)-\E[g_1(\ep_j)])
\\
1+\ep_j g_1(\ep_j) - \E[1+\ep_j g_1(\ep_j)]
\end{pmatrix}
\nn\\
&{}\qquad 
\yurev{-}\sig_0^{-1}
\frac{1}{N_n}\sumjj 
\sqrt{N_n}
\begin{pmatrix}
X'_{t_{j-1}}(\E[g_1(\ep_j)] - \int g_1(y)\phi_1(y)dy)
\\
\E[1+\ep_j g_1(\ep_j)]- \int (1+y \p g_1(y))\phi_1(y)dy
\end{pmatrix}.
\nn\\
&=: r'_{1,n} + r''_{1,n}.
\label{hm:Da-p2}
\end{align}
The Burkholder inequality gives the $L^K(\pr)$-boundedness of $(r'_{1,n})_n$ for any $K>0$.
By Lemma \ref{hm:lem_loc.lim.thm}, the summand in $r''_{1,n}$ can be bounded in absolute value by a constant multiple of $(1+|X'_{t_{j-1}}|) \sqrt{N_n}h^{1-c} \lesssim (1+|X'_{t_{j-1}}|) n^{c-\ep'/2}$. 
The sequence $(r''_{1,n})_n$ is also $L^K(\pr)$-bounded since $c>0$ can be taken arbitrarily small so that $c\le\ep'/2$.

For the second term in \eqref{hm:Da-p1}, by the compensation and the Burkholder inequality it suffices to show that for any $K\ge 2$,
\begin{equation}
\frac{1}{N_n} \sumjj 
\E\left[\left|
\sqrt{N_n}\E^{j-1}[R_{n,j}\,g_1(\ep_j)]
\right|^K\right] = o(1).
\nonumber
\end{equation}
This is ensured by the last convergence in Assumption \ref{hm:A_X}.2.

\medskip

\paragraph{Quasi-observed information}

The components of $\Gam_{a,n}$ consists of
\begin{align}
-\frac{1}{N_n}\p_\mu^2\mbbh_{1,n}(a_0)
&= -\sig_0^{-2}\frac{1}{N_n}\sumjj (h^{-1}\D_j X)^{\otimes 2} \p g_1(\ep_j),
\nn\\
-\frac{1}{N_n}\p_\sig^2\mbbh_{1,n}(a_0)
&= -\sig_0^{-2}\frac{1}{N_n}\sumjj \left(1+2\ep_j g_1(\ep_j) + \ep_j^2 \p g_1(\ep_j)\right),
\nn\\
-\frac{1}{N_n}\p_\mu\p_\sig\mbbh_{1,n}(a_0)
&= -\sig_0^{-2}\frac{1}{N_n}\sumjj (h^{-1}\D_j X) \left(g_1(\ep_j) - \ep_j \p g_1(\ep_j)\right).
\nonumber
\end{align}
To conclude that $\sup_n \E [ (N_n^{c_{1}}|\Gam_{a,n} - \Gam_{a,0} |)^{K} ]<\infty$, we note the following three preliminary steps (we can take $c_1>0$ as small as we want):
\begin{itemize}
\item First, we replace $h^{-1}\D_j X$ in the summands by $R_{n,j}+X'_{t_{j-1}}$; this together with Assumption \ref{hm:A_X} enables us to replace $h^{-1}\D_j X$ in the summands by $X'_{t_{j-1}}$;
\item Second, we extract the martingale terms by replacing the parts of the form $\eta(\ep_j)$ by $\eta(\ep_j) - \E[\eta(\ep_j)]$ 
and then apply the Burkholder inequality to the latter parts;
\item Third, we apply Lemma \ref{hm:lem_loc.lim.thm} to the expressions $\E[\eta(\ep_j)]$ with the facts $-\int \p g_1(y)\phi_1(y)dy = 1/2$, 
$-\int (1+ 2y g_1(y) + y^2\p g_1(y))\phi_1(y)dy=1/2$, and $\int (g_1(y) - y \p g_1(y))\phi_1(y)dy=0$.
\end{itemize}
It remains to note that, as in the last paragraph in the proof of Lemma \ref{hm:cqmle_lem1}, Assumption \ref{hm:A_X}.3 ensures that
\hmrrev{
(recall that $S_0 = \int \psi_f (z)\pi_0(dz)$ for $f(x')=x^{\prime \,\otimes 2}$: Assumption \ref{hm:A_X}.4)
}
\begin{align}
\sup_n \E \left[ \left( N_n^{c_{1}}\left|\frac{1}{N_n}\sumjj X_{t_{j-1}}^{\prime\, \otimes 2} - S_0\right| \right)^K\right] < \infty.
\nonumber
\end{align}

\medskip

\paragraph{Third-order derivatives}

We have
\begin{equation}
\frac{1}{N_n}\sup_{a}|\p_{a}^{3}\mbbh_{1,n}(a)| \lesssim \frac{1}{N_n}\sumjj \left( 1 + |h^{-1} \D_j X| \right),
\nonumber
\end{equation}
hence $\sup_n \E[\sup_{a}| N_n^{-1}\p_{a}^{3}\mbbh_{1,n}(a)|^K]<\infty$.

\subsubsection{Proof of Lemma \ref{hm:cqmle_lem3}}
\label{hm:sec_aAN_lem_proof}

We have $\D_{a,n} = r'_{1,n} + o_p(1)$ from the arguments in Section \ref{hm:cqmle_lem2_proof} (with $c<\ep'/2$).
Write $r'_{1,n}=\sumjj \zeta_{n,j}$. Then, the $(\mcf_{t_{j}})$-martingale difference array $(\zeta_{n,j})_{j\le [nT_n]}$ satisfies 
the Lyapunov condition ($\sumjj \E^{j-1}[|\zeta_{n,j}|^{\del+2}] = o_p(1)$ for $\del>0$).
Also, by using Lemma \ref{hm:lem_loc.lim.thm} as in Section \ref{hm:cqmle_lem2_proof} together with the facts $\int g_1(y)^2\phi_1(y)dy=\int (1+ y g_1(y))^2\phi_1(y)dy=1/2$ and $\int g_1(y)(1+ y g_1(y))\phi_1(y)dy=0$, we obtain
\begin{align}
& \frac{1}{N_n}\sumjj \E^{j-1}\left[\zeta_{n,j}^{\otimes 2}\right]
\nn\\
&= \sig_0^{-2}\frac{1}{N_n}\sumjj 
\begin{pmatrix}
(h^{-1} \D_j X)^{\otimes 2}g_1(\ep_j)^2 & \text{sym.} \\
(h^{-1} \D_j X) g_1(\ep_j)(1+ \ep_j g_1(\ep_j)) & (1+ \ep_j g_1(\ep_j))^2
\end{pmatrix}
+ o_p(1)
\nn\\
&= \sig_0^{-2}\frac{1}{N_n}\sumjj 
\begin{pmatrix}
X^{\prime \otimes 2}_{t_{j-1}}\int g_1(y)^2\phi_1(y)dy & 0 \\
0 & \int (1+ y g_1(y))^2\phi_1(y)dy
\end{pmatrix}
+ o_p(1)
\nn\\
&= \Gam_{a,0} + o_p(1).
\nonumber
\end{align}
It follows that $\D_{a,n} \cil N_{q+1}(0,\Gam_{a,0})$ by the central limit theorem for martingale difference arrays \cite[Chapter VII.8]{Shi95}.
The convergence $\Gam_{a,n} \cip \Gam_{a,0}$ is automatic by the argument in Section \ref{hm:cqmle_lem2}; indeed, it holds almost surely by Borel-Cantelli lemma.

\subsubsection{Proof of Lemma \ref{hm:lem_loc.lim.thm}}

As in \cite[Example 2.7]{Mas19spa}, our proof is based on the explicit form of the characteristic function \eqref{hm:t_nu(0,1)-CF}.
We will proceed similarly to the proof of \cite[Lemma 2.2]{Mas19spa}, with omitting some details of the calculation.

\hmrev{
We write $g(z)$ for the {\ld} of $\mcl(J_1)$ and $g_0(z):=\pi^{-1}z^{-2}$ for the {\ld} of the standard Cauchy distribution $t_1$.
Let $\rho(z):=g(z)/g_0(z) - 1$ so that $g(z)=g_0(z)(1+\rho(z))$; the function $\rho$ quantifies how much the $\mcl(J_1)$ is far from $t_1$ around the origin.
Then, by \cite[Proposition 2.18]{Rai00} we have
\begin{equation}
\rho(z) = \frac{\pi}{4}(1-\nu_0) |z| + o(|z|), \qquad |z|\to 0.
\label{hm:J.Ld}
\end{equation}
}
Let $\vp_h(u)$ and $\vp_0(u)$ denote the characteristic functions of $\mcl(h^{-1}J_h)$ and $t_1$, respectively; we have $\vp_0(u) = \exp\{\int (\cos(uy)-1) g_0(y)dy\}=e^{-|u|}$ and already noted in \eqref{hm:Cauchy.approx_CF} that
\begin{equation}
\vp_h(u) = \left(\frac{2^{1-\nu/2}}{\Gam(\nu/2)}\right)^h \left(|u/h|^{\nu/2} K_{\nu/2}(|u/h|)\right)^h \to \vp_0(u),
\quad h\to 0.
\nonumber
\end{equation}

By straightforward computations, we obtain $\vp_h(u)=\exp\{\int(\cos(uy)-1)h^2 g(hy)dy\}=\vp_0(u)\exp\{\chi_h(u)\}$ where 
\begin{equation}
\chi_h(u) := \int (\cos(uy)-1) g_0(y) \rho(hy)dy.
\nonumber
\end{equation}
Pick a $\del\in(0,1)$ for which $\sup_{|y|\le \del}|\rho(y)|\le 1/2$.
By \cite[Theorem 25.3]{Sat99} we know the equivalence
\begin{equation}
\int_{|z|>1}|z|^{r}g(z)dz <\infty \iff \E[|J_1|^r]<\infty\qquad (r>0).
\nonumber
\end{equation}
The function $\rho$ may be unbounded for $|z|\to\infty$ when $\nu_0\in(0,1)$.
Now observe that
\begin{align}
|\chi_h(u)| &\le \int (1-\cos(uy)) g_0(y) |\rho(hy)| dy
\nn\\
&= \int_{|y|\le \del/h} (1-\cos(uy)) g_0(y) |\rho(hy)| dy + \int_{|y|> \del/h} (1-\cos(uy)) g_0(y) |\rho(hy)| dy
\label{hm:llt-p2}\\
&\le \frac12\int_{|y|\le \del/h} (1-\cos(uy)) g_0(y)dy + \frac{2}{\pi^2} \int_{|y|>\del/h} y^{-2} |\rho(hy)| dy.
\label{hm:llt-p1}
\end{align}
\hmrev{
The first term in \eqref{hm:llt-p1} can be bounded by $-2^{-1}\int (\cos(uy)-1) g_0(y)dy = |u|/2$.
}
The integral in the second term in \eqref{hm:llt-p1} equals $h \int_{|y|>\del} y^{-2} |\rho(y)| dy$, which is $O(h)$ if $\rho$ is bounded.
When $\rho$ is unbounded, we have $\rho(y)\to \infty$ as $|y|\to\infty$ hence there is a $c_\rho>\del$ for which $\rho(y)\ge 0$ for $|y|\ge c_\rho$.
In this case, $\int_{|y|>\del} y^{-2} |\rho(y)| dy = \int_{|y|\in[\del,c_\rho)} y^{-2} |\rho(y)| dy + \int_{|y|>c_\rho} y^{-2} \rho(y) dy
\lesssim \int_{|y|\in[\del,c_\rho)} y^{-2} dy + \pi \int_{|y|>c_\rho} (g(y) - g_0(y)) dy < \infty$, so that the integral in the second term in \eqref{hm:llt-p1} is $O(h)$.
It follows that $|\chi_h(u)|\le |u|/2 + Ch$.
Also, the first term in \eqref{hm:llt-p2} can be bounded by a constant multiple of $h\{(1+u^2) + \log(1/h)\}$:
this can be seen by dividing the domain of integration 
into $\{y:\,|y|\le \del/h,\,|y|>c\}$ and $\{y:\,|y|\le \del/h,\, |y|\le c\}$, and then proceeding as before with using the fact $|1-\cos x| \lesssim x^2$ for the latter subdomain.

Following the same line as in the proof of \cite[Lemma 5.1]{Mas19spa}, we can conclude that for any $H>0$ and $\xi\in(0,1)$,
\begin{equation}
\int_0^\infty |\vp_h(u)-\vp_0(u)|du \le \int_0^\infty (u^{-\xi}\vee u^{H})|\vp_h(u)-\vp_0(u)|du \lesssim h \log(1/h).
\label{hm:llt-p6}
\end{equation}
This combined with the Fourier inversion formula yields
\begin{equation}
\sup_{y\in\mbbr}\left| f_h(y) - \phi_1(y) \right| \lesssim h \log(1/h).
\label{hm:llt-p3}
\end{equation}
As in the proof of \cite[Eq.(5.1)]{Mas19spa}, we can derive the estimate
\begin{equation}
\sup_{h\in(0,1]}\sup_{M>0} M^{(1\wedge \nu_0) - \kappa} \int_{|y|>M} |y|^\kappa f_h(y) dy < \infty
\nonumber
\end{equation}
valid for any $\kappa\in(0,1\wedge \nu_0)$, considering the cases $\nu_0\ge 1$ and $\nu_0\in(0,1)$ (for which $\rho$ is unbounded) separately as before.

Let $(b_n)$ be a positive real sequence such that $b_n\uparrow \infty$ as $n\to\infty$.
To deduce \eqref{hm:lem_loc.lim.thm-1}, we proceed with estimating $\del_n' := \int_{|y|>b_n} |\zeta(y)|\left| f_h(y) - \phi_1(y) \right| dy$ and $\del''_n:=\int_{|y|\le b_n} |\zeta(y)|\left| f_h(y) - \phi_1(y) \right| dy$ separately.
Since the density $f_h$ is bounded continuous uniformly in $h\in(0,1]$, we have 
$\sup_{h\in(0,1]} f_h(y) \lesssim |y|^{-(1+1\wedge \nu_0)+\ep_0}$ for any $|y|\ge 1$ and for any small $\ep_0>0$; recall \cite[Theorem 25.3]{Sat99} already mentioned above.
Since $\zeta(y)$ is of at most logarithmic-power growth order, there exists a sufficiently small 
\hmrrev{$\ep_1\in(0,1\wedge\nu_0)$ } for which 
\begin{align}
\del'_n &\le \int_{|y|>b_n} |\zeta(y)|f_h(y) dy + \int_{|y|>b_n} |\zeta(y)|\phi_1(y) dy 
\nn\\
&\lesssim 
\hmrrev{b_n^{-1\wedge\nu_0+\ep_1}} + b_n ^{-1+\ep_1} 
\lesssim \hmrrev{b_n^{-1\wedge\nu_0+\ep_1}}.
\label{hm:llt-p7}
\end{align}
\hmrrev{
Turning to $\del''_n$, we will show that $\del''_n \lesssim b_n^{c''} h\log(1/h)$ for arbitrarily small $c''>0$.
}
Let $\psi_h(u):=\log \vp_h(u)$ and $\psi_0(u):=\log \vp_0(u)$.
Fix $c\in(0,1]$ arbitrarily.
By the exactly same procedure as in \cite[Eq.(5.4)]{Mas19spa}, we obtain
\begin{equation}
\del''_n \lesssim (1+ b_n^{\ep_2}) \int_{|y|\le b_n}\left| f_h(y) - \phi_1(y) \right| dy
\lesssim (1+ b_n^{\ep_2})(\del''_{1,n}(c) + \del''_{2,n}(c)),
\nonumber
\end{equation}
where $\ep_2>0$ can be taken arbitrarily small 
and where
\begin{align}
\del''_{1,n}(c) &:= b_n^c \int_0^\infty u^{c}|\vp_h(u) - \vp_0(u)| |\p_u\psi_h(u)| du,
\label{hm:llt-p4} \\
\del''_{2,n}(c) &:= b_n^c \int_0^\infty u^{c}|\p_u\psi_h(u) - \p_u\psi_0(u)| \vp_0(u) du.
\label{hm:llt-p5}
\end{align}
We will take a closer look at these quantities through the specific form of $\vp_h(u)$.

Put $s=\nu_0/2$ in the sequel.
We have $\psi_h(u) = h C_s + sh\log u + sh \log(1/h) + h\log K_s(u/h)$ for $u>0$ with some constant $C_s$ only depending on $s$.
Since $\p_z K_\al(z)=-K_{\al-1}(z)-(\al/z)K_\al(z)$, we obtain the expression $\p_u\psi_h(u) - \p_u\psi_0(u) = 1- K_{s-1}(u/h)/K_s(u/h) =:\xi_s(u)$.
The function $x\mapsto x\xi_s(x)$ is smooth on $(0,\infty)$ and we can deduce that $\sup_{x>0}|x\xi_s(x)|<\infty$ as follows:
\begin{itemize}
\item we have $|x\xi_s(x)|\lesssim x^{1\wedge \nu_0}$ around the origin, by using the property $K_{r'}(x)\sim \Gam(|r'|)2^{|r'|-1}x^{-|r'|}$ for $x\to 0$ ($r'\ne 0$);
\item moreover, $x\xi_s(x) = x(\frac{1}{8x} + O(x^{-1}))=O(1)$ for $x\uparrow \infty$ since $K_s(x)\sim \sqrt{\frac{\pi}{2x}}\, (1+\frac{4s^2 - 1}{8x} + O(x^{-2}))$ for $x\uparrow \infty$.
\end{itemize}
Thus, we have obtained $|\p_u\psi_h(u) + 1| = |\xi_s(u/h)|\lesssim h/u$ and $|\p_u\psi_h(u)| \lesssim 1+h/u$ ($u>0$).
Substituting these estimates and \eqref{hm:llt-p6} into \eqref{hm:llt-p4} and \eqref{hm:llt-p5}, we obtain 
\hmrrev{$\del''_{1,n}(c)\lesssim b_n^{c} h\log(1/h)$ and $\del''_{2,n}(c)\lesssim b_n^{c} h$.
Hence it follows that $\del''_n \lesssim b_n^{c+\ep_2} h\log(1/h)$ where we can set $c+\ep_2>0$ arbitrarily small as was desired.}

Combining \eqref{hm:llt-p7} and the last estimate of $\del''_n$ now gives $\int |\zeta(y)|\left| f_h(y) - \phi_1(y) \right|dy \lesssim 
\hmrrev{b_n^{-(1\wedge\nu_0)+\ep_1}}
 + b_n^{\ep_2+c} h\log(1/h)$.
This upper bound is minimized for 
$b_n = (h\log(1/h))^{-1/\chi}$ with $\chi := \hmrrev{1\wedge \nu_0 + c+\ep_2 - \ep_1}$, with which we get
\begin{align}
\int \left| f_h(y) - \phi_1(y) \right| dy 
&\lesssim \left(h\log(1/h)\right)^{1 - \frac{c+\ep_2}{\chi}}.
\nonumber
\end{align}
Given any small $r>0$, we can take all of $\ep_1,\ep_2,c>0$ small enough to conclude \eqref{hm:lem_loc.lim.thm-1}.
Now \eqref{hm:lem_loc.lim.thm-2} is trivial under \eqref{hm:B_order}.

\subsection{Proof of Theorem \ref{hm:thm_tQLA}}

\subsubsection{Proof of (\ref{hm:tQMLE.tpe}): Tail-probability estimate}
\label{hm:sec_t.tpe}

This section aims to prove \eqref{hm:tQMLE.tpe} through \cite[Theorem 3(c)]{Yos11}.
Define
\begin{align}
\mbby_{2,n}(\nu) &= \frac{1}{T_n}\left( \mbbh_{2,n}(\nu) - \mbbh_{2,n}(\nu_0) \right),
\label{hm:def_Y2n}
\\
\mbby_{2}(\nu) &= \E[\rho(\ve_1;\nu) - \rho(\ve_1;\nu_0)]
= \int \log\left(\frac{f(\ve;\nu)}{f(\ve;\nu_0)}\right) f(\ve;\nu_0) d\ve.
\label{hm:def_Y2}
\end{align}

\begin{lem}
\label{hm:t_tpe_lem1}
~
\begin{enumerate}
\item There exists a positive constant $c'_{0}>0$ such that $\mbby_{2}(\nu) \le -c'_{0}|\nu-\nu_0|^{2}$ for every $\nu\in\Theta_\nu$.
\item There exists a constant $c'_2\in(0,1/2]$ such that for every $K>0$,
\begin{equation}
\sup_n \E\left[
\sup_\nu \left|T_n^{c'_{2}}\left(\mbby_{2,n}(\nu) - \mbby_{2}(\nu)\right)\right|^K
\right] < \infty.
\nonumber
\end{equation}
In particular, $\ds{\sup_{\nu}\left|\mbby_{2,n}(\nu)-\mbby_{2}(\nu)\right| \cip 0}$.
\item The consistency holds: $\nes\cip\nu_0$.
\end{enumerate}
\end{lem}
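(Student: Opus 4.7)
The plan is to treat items (1)--(3) in order, each building on the previous.

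\textbf{Item (1).} Observe that $\mbby_{2}(\nu)=-\mathrm{KL}\!\bigl(f(\cdot;\nu_{0})\,\|\,f(\cdot;\nu)\bigr)\le 0$, with equality only at $\nu=\nu_{0}$ by the identifiability of the Student-$t$ family in the degrees of freedom. The map $\nu\mapsto\mbby_{2}(\nu)$ is smooth on $\Theta_{\nu}$, the score identity yields $\p_{\nu}\mbby_{2}(\nu_{0})=0$, and direct differentiation gives $\p_{\nu}^{2}\mbby_{2}(\nu_{0})=-\Gam_{\nu,0}<0$ by \eqref{hm:ep.hat-ep}. Taylor's theorem therefore supplies the quadratic upper bound in a neighborhood of $\nu_{0}$, and compactness of $\overline{\Theta_{\nu}}$ together with continuity of $\mbby_{2}$ extends the bound uniformly to $\Theta_{\nu}$.

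\textbf{Item (2).} The key step is to replace $\hep_{i}$ by $\ve_{i}$. A direct computation gives
\begin{equation}
\hep_{i}-\ve_{i}=\Bigl(\frac{\sig_{0}}{\ses}-1\Bigr)\ve_{i}-\ses^{-1}(\mes-\mu_{0})\cdot(X_{i}-X_{i-1}),
\nonumber
\end{equation}
so by Theorem \ref{hm:thm_CQLA} this quantity is of order $N_{n}^{-1/2}$ with uniform polynomial-moment control, using that $|X_{i}-X_{i-1}|$ has all moments under Assumption \ref{hm:A_X}. Set $\tilde{\mbby}_{2,n}(\nu):=T_{n}^{-1}\sum_{i=1}^{[T_{n}]}(\rho(\ve_{i};\nu)-\rho(\ve_{i};\nu_{0}))$. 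Second-order Taylor expansion of $\rho(\hep_{i};\nu)$ around $\ve_{i}$ exploits that $\p_{\ve}\rho(\ve;\nu)=-(\nu+1)\ve/(1+\ve^{2})$ and $\p_{\ve}^{2}\rho(\ve;\nu)$ are bounded uniformly on $\Theta_{\nu}$; combined with the rate above this shows that $\mbby_{2,n}(\nu)-\tilde{\mbby}_{2,n}(\nu)$ is of $L^{K}$-order $N_{n}^{-1/2}$, while the $\nu$-derivatives of $\rho$ only introduce $\log(1+|\ve|)$ factors, which are harmless. For the centred i.i.d. sum $\tilde{\mbby}_{2,n}(\nu)-\mbby_{2}(\nu)$, Burkholder's inequality gives $L^{K}$-order $T_{n}^{-1/2}$. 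Uniformity in $\nu$ is then obtained via the Sobolev inequality on the bounded interval $\Theta_{\nu}$ applied to both pieces. Under \eqref{T/N->0}, combining these two estimates yields the claimed bound at rate $T_{n}^{c_{2}'}$ for any sufficiently small $c_{2}'\in(0,1/2]$, since $T_{n}^{c_{2}'}N_{n}^{-1/2}=(T_{n}/N_{n})^{c_{2}'}N_{n}^{c_{2}'-1/2}\to 0$.

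\textbf{Item (3).} This is the standard M-estimation argument: by definition $\mbby_{2,n}(\nes)\ge\mbby_{2,n}(\nu_{0})=0$, so by item (2), $\mbby_{2}(\nes)\ge -o_{p}(1)$, and by item (1), $c_{0}'|\nes-\nu_{0}|^{2}\le -\mbby_{2}(\nes)=o_{p}(1)$, whence $\nes\cip\nu_{0}$.

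\textbf{Main obstacle.} The delicate point is the uniform-in-$\nu$ control of the residual correction in (2). Although $\hat{a}_{n}-a_{0}=O_{p}(N_{n}^{-1/2})$, the correction couples to the unit-time increment $X_{i}-X_{i-1}$ (which is $O_{p}(1)$, not a high-frequency increment), so one must invoke the polynomial-type tail bound \eqref{hm:CQMLE.tpe} rather than mere tightness in order to obtain the $L^{K}$-estimates required for the Sobolev step; compatibility between the $N_{n}^{-1/2}$-rate from the first stage and the $T_{n}^{-c_{2}'}$ rate targeted here is precisely what \eqref{T/N->0} ensures.
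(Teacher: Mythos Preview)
Items (1) and (3) are fine and match the paper. The gap is in item (2): your Taylor-expansion argument for the residual correction does not deliver the claimed $L^{K}$ bound for all $K>0$, because $\ve_{i}\sim t_{\nu_{0}}$ has only moments of order strictly less than $\nu_{0}$. Concretely, from your own expression
\[
\hep_{i}-\ve_{i}=\Bigl(\tfrac{\sig_{0}}{\ses}-1\Bigr)\ve_{i}-\ses^{-1}(\mes-\mu_{0})\cdot(X_{i}-X_{i-1}),
\]
the bound $|\rho(\hep_{i};\nu)-\rho(\ve_{i};\nu)|\lesssim |\hep_{i}-\ve_{i}|$ (via $\|\p_{\ve}\rho\|_{\infty}<\infty$) leaves you with a factor $|\ve_{i}|\,|\hat u_{\sig,n}|/\sqrt{N_{n}}$. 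Taking the $K$th moment then requires $\E[|\ve_{i}|^{K}]<\infty$, which fails as soon as $K\ge\nu_{0}$; for $\nu_{0}\le 1$ even the average $T_{n}^{-1}\sum_{i}|\ve_{i}|$ diverges. The second-order remainder has the same problem squared. So the step ``this shows that $\mbby_{2,n}(\nu)-\tilde{\mbby}_{2,n}(\nu)$ is of $L^{K}$-order $N_{n}^{-1/2}$'' is unjustified.

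The paper circumvents this heavy-tail obstruction by an event decomposition. On the event $H_{1,i}=\{|\hep_{i}-\ve_{i}|\le\tfrac12|\ve_{i}|\}$ the intermediate point in the mean-value form satisfies $|\ve_{i}+s(\hep_{i}-\ve_{i})|\ge\tfrac12|\ve_{i}|$, so one can exploit the \emph{decay} $|\p_{\ve}\rho(\ve;\nu)|\lesssim(1+|\ve|)^{-1}$ (not merely boundedness) to cancel the dangerous $|\ve_{i}|$ factor, yielding a bound $\lesssim N_{n}^{-1/2}(|\hat u_{\sig,n}|+|\hat u_{\mu,n}|\,|X_{i}-X_{i-1}|/(1+|\ve_{i}|))$ with all moments. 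On the complement, which is contained in $\{|\ve_{i}|\le 1\}\cup\{|\hat u_{a,n}|\ge C\sqrt{N_{n}}/b_{n}\}\cup\{|X_{i}-X_{i-1}|\ge b_{n}\}$ for a suitable $b_{n}\to\infty$, one abandons Taylor and instead bounds $\rho$ directly via its logarithmic growth, using $\{\log(1+\ve^{2})\}^{2K}\lesssim 1+|\ve|^{\nu_{0}/2}$ (an admissible moment since $\nu_{0}/2<\nu_{0}$) together with Cauchy--Schwarz against the small probabilities of these events, which are $o(T_{n}^{-K})$ by \eqref{hm:CQMLE.tpe} and the moment bounds on $X$. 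Your sketch needs this two-regime treatment; the uniform boundedness of $\p_{\ve}\rho$ and $\p_{\ve}^{2}\rho$ alone is not enough.
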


\begin{proof}
The proof of 1 is similar to the case of $\mbby_{1}(a)$ in Section \ref{hm:sec_Cauchy.QLA}, and the consistency 3 is an obvious consequence of 1 and 2.

To prove 2, \yurev{we fix $K>0$ in the rest of this proof}.
We will repeatedly use the following estimate:
\begin{align}
& \sup_{\ve}\sup_{\nu}
\bigg(
\sup_{k\ge 0}\sup_{l\ge 2}\big|\p_\ve^k \p_\nu^l \rho(\ve;\nu)\big|
+\frac{\max_{l\in\{0,1\}}\big|\p_\nu^l \rho(\ve;\nu)\big|}{1+\log(1+\ve^2)}
\nn\\
&{}\qquad
+\sup_{k\ge 1}\max_{l\in\{0,1\}}(1+|\ve|^k)\big|\p_\ve^k \p_\nu^l \rho(\ve;\nu)\big|
\bigg) < \infty.
\label{hm:rho.prop-1}
\end{align}
Since $\ve_1,\dots,\ve_{[T_n]}$ are i.i.d., we have the moment estimate
\begin{equation}
\sup_n \E\left[
\left| \frac{1}{\sqrt{T_n}}\sumi \left(\p_\nu^l\rho(\ve_i;\nu) - \E[\p_\nu^l \rho(\ve_1;\nu)]\right) \right|^K
\right] < \infty
\label{hm:tpe_lem1-6}
\end{equation}
for any $\nu\in\overline{\Theta_\nu}$ and $l\in\mbbzp$ through the the Sobolev-inequality argument as before.
Hence it suffices to show that there exists a constant $c'_2\in(0,1/2]$ such that
\begin{equation}
\sup_n \E\left[
\sup_\nu \left|T_n^{c'_{2}}\left(
\frac{1}{T_n}\sumi \left( \rho(\hep_i;\nu) - \rho(\ve_i;\nu) \right)\right)\right|^K
\right] < \infty.
\label{hm:tpe_lem1-1}
\end{equation}

To manage the term ``$\rho(\hep_i;\nu) - \rho(\ve_i;\nu)$'', we will separately consider it on different four events.
Let
\begin{align}
H_{1,i} := \left\{|\hep_i - \ve_i| \le \frac12 |\ve_i|\right\}, \qquad i=1,\dots,[T_n].
\nonumber
\end{align}
Writing $\hat{u}_{\mu,n}=\sqrt{N_n}(\mes-\mu_0)$ and $\hat{u}_{\sig,n}=\sqrt{N_n}(\ses-\sig_0)$, we have
\begin{align}
\hep_i - \ve_i = \frac{-1}{\ses \sqrt{N_n}}\left( \ve_i \hat{u}_{\sig,n} + (X_i-X_{i-1})\cdot \hat{u}_{\mu,n} \right).
\label{hm:hep-ep.estimate}
\end{align}
Let $(b_n)_n$ be a positive sequence tending to infinity. Then,
\begin{align}
H_{1,i}^c
&\subset \left\{
|\hat{u}_{\sig,n}||\ve_i| + |X_i - X_{i-1}||\hat{u}_{\mu,n}| \ge C \sqrt{N_n} |\ve_i|
\right\} 
\nn\\
&\subset \left\{ (|X_i - X_{i-1}| + |\ve_i|)(|\hat{u}_{\mu,n}| + |\hat{u}_{\sig,n}|) \ge C \sqrt{N_n} |\ve_i| \right\} 
\nn\\
&\subset \{|\ve_i| \le 1\} \cup 
\left\{|\ve_i| \ge 1,~
|\hat{u}_{\mu,n}| + |\hat{u}_{\sig,n}| \ge C\sqrt{N_n}\, \frac{|\ve_i|}{|\ve_i|+b_n}\right\} 
\nn\\
&{}\qquad\cup \left\{ |X_i - X_{i-1}| \ge b_n  \right\} 
\nn\\
&\subset \{|\ve_i| \le 1\} \cup 
\left\{|\hat{u}_{\mu,n}| + |\hat{u}_{\sig,n}| \ge C \frac{\sqrt{N_n}}{b_n}\right\} \cup 
\left\{ |X_i - X_{i-1}| \ge b_n  \right\} 
\nn\\
&=: H_{2,i} \cup H_{3,n} \cup H_{4,i}.
\nonumber
\end{align}

First, for $H_{1,i}$ and $H_{2,i}$, we take a closer look at the right-hand side of the expression
\begin{equation}
\rho(\hep_i;\nu) - \rho(\ve_i;\nu) = \left(\int_0^1 \p_\ep \rho(\ve_i + s(\hep_i - \ve_i);\nu)ds \right) (\hep_i -\ve_i).
\label{hm:tpe_lem1-2}
\end{equation}
On $H_{1,i}$, we have
\begin{equation}
\inf_{s\in[0,1]} \left| \ve_i + s(\hep_i-\ve_i)\right| \ge 
\frac12 |\ve_i|.
\nonumber
\end{equation}
Hence by \eqref{hm:rho.prop-1} and \eqref{hm:hep-ep.estimate},
\begin{align}
\left|\rho(\hep_i;\nu) - \rho(\ve_i;\nu)\right|I_{H_{1,i}}
&\lesssim 
\frac{1}{\sqrt{N_n}}\left( |\hat{u}_{\sig,n}| + \frac{|X_i-X_{i-1}|}{1+|\ve_i|}|\hat{u}_{\mu,n}| \right),
\label{hm:t_tpe_lem1-1}\\
\left|\rho(\hep_i;\nu) - \rho(\ve_i;\nu)\right|I_{H_{2,i}}
&\lesssim 
\frac{1}{\sqrt{N_n}}\left( |\hat{u}_{\sig,n}| + |X_i - X_{i-1}||\hat{u}_{\mu,n}|  \right).
\end{align}
The last two displays together with the tail-probability estimate \eqref{hm:CQMLE.tpe} and \eqref{T/N->0} imply that
\begin{align}
\E\left[
\sup_\nu \left|\sqrt{T_n}\!\left(\frac{1}{T_n}\sumi \left( \rho(\hep_i;\nu) - \rho(\ve_i;\nu) \right)I_{H_{1,i}\cup H_{2,i}}\right)\!\right|^K
\right] 
\lesssim \left(\frac{T_n}{N_n}\right)^{\frac{K}{2}} \!\!\! \to 0.
\label{hm:t_tpe_lem1-4}
\end{align}

Second, we consider $H_{3,n}$ and $H_{4,i}$. To this end, we do not use the expression \eqref{hm:tpe_lem1-2} but directly estimate the target expectation.
Recalling \eqref{hm:CQMLE.tpe}, for any $K_1,K_2>0$ we obtain
\begin{align}
\max_{1\le i\le [T_n]}\pr\left[H_{3,n}\cup H_{4,i}\right]
&\lesssim 
\left(\frac{b_n}{\sqrt{N_n}}\right)^{K_1}
\E\left[|\hat{u}_{\mu,n}|^{K_1} + |\hat{u}_{\sig,n}|^{K_1}\right]
\nn\\
&{}\qquad + b_n^{-K_2}\E\left[|X_i - X_{i-1}|^{K_2} \right]
\nn\\
&\lesssim \left(\frac{b_n}{\sqrt{N_n}}\right)^{K_1} + b_n^{-K_2}.
\nonumber
\end{align}
Now we set $b_n=N_n^{1/4}$. 
Given any $K>0$, under \eqref{T/N->0} we can take sufficiently large $K_1,K_2>0$ so that
\begin{equation}
\max_{1\le i\le [T_n]}\pr\left[H_{3,n}\cup H_{4,i}\right] = o\left(T_n^{-K}\right).
\label{hm:t_tpe_lem1-2}
\end{equation}
\hmrrev{
We will use the following (rough) estimate: there exists a constant $C'>0$ only depending on $K$ and $\nu_0$ for which $\{\log(1+\ep^2)\}^{2K} \le C'(1+|\ep|^{\nu_0/2})$ ($\ep\in\mbbr$).
}
With this observation together with \eqref{hm:rho.prop-1} and \eqref{hm:hep-ep.estimate}, we have
\begin{align}
& \frac{1}{T_n}\sumi\E\left[  \sup_\nu \left|\rho(\hep_i;\nu) - \rho(\ve_i;\nu)\right|^{2K} \right]^{1/2}
\nn\\
&\lesssim 
1 + \frac{1}{T_n}\sumi \left( \E\left[  \{\log(1+\hep_i^2)\}^{2K} + \{\log(1+\ve_i^2)\}^{2K} \right] \right)^{1/2}
\nn\\
&\lesssim 
1 + \frac{1}{T_n}\sumi \left( \E\left[  |\hep_i|^{\nu_0 /2} + |\ve_i|^{\nu_0 /2} \right] \right)^{1/2}
\nn\\
&\lesssim 
1 + \frac{1}{T_n}\sumi \left( \E\left[ |\ve_i|^{\nu_0 /2} 
+\left(
\frac{1}{\sqrt{N_n}}\left(|\ve_i||\hat{u}_{\sig,n}| + |X_{i}-X_{i-1}||\hat{u}_{\mu,n}|
\right)
\right)^{\nu_0 /2}
\right] \right)^{1/2}
\nn\\
&\lesssim 1.
\label{hm:t_tpe_lem1-3}
\end{align}
Combining \eqref{hm:t_tpe_lem1-2} and \eqref{hm:t_tpe_lem1-3}, \yurev{Jensen's inequality and Cauchy-Schwarz inequality yield}
\begin{align}
& \E\left[
\sup_\nu \left|\sqrt{T_n}\left(\frac{1}{T_n}\sumi \left( \rho(\hep_i;\nu) - \rho(\ve_i;\nu) \right)I_{H_{3,n}\cup H_{4,i}}\right)\right|^K 
\right]
\nn\\
&\lesssim 
T_n^{K/2}
\E\left[ \frac{1}{T_n}\sumi \sup_\nu \left|\rho(\hep_i;\nu) - \rho(\ve_i;\nu)\right|^K I_{H_{3,n}\cup H_{4,i}} \right]
\nn\\
&\yurev{\lesssim 
T_n^{K/2}
 \frac{1}{T_n}\sumi \E\left[\sup_\nu \left|\rho(\hep_i;\nu) - \rho(\ve_i;\nu)\right|^{2K}\right]^{1/2} \pr\left[H_{3,n}\cup H_{4,i}\right]^{1/2}}
\nn\\
&\lesssim 
T_n^{K/2}
\max_{1\le i\le [T_n]}\pr\left[H_{3,n}\cup H_{4,i}\right]^{1/2}
\frac{1}{T_n}\sumi\E\left[  \sup_\nu \left|\rho(\hep_i;\nu) - \rho(\ve_i;\nu)\right|^{2K} \right]^{1/2}
\nn\\
&\lesssim o(1).
\label{hm:t_tpe_lem1-5}
\end{align}
The claim \eqref{hm:tpe_lem1-1} with $c_2'=1/2$ follows from \eqref{hm:t_tpe_lem1-4} and \eqref{hm:t_tpe_lem1-5}.
\end{proof}

As in the proof of \eqref{hm:CQMLE.tpe} in Theorem \ref{hm:thm_CQLA}, the claim \eqref{hm:tQMLE.tpe} follows on showing the next lemma.

\begin{lem}
\label{hm:tpe_lem2}
~
There exist a constant $c'_1\in(0,1/2)$ such that for every $K>0$,
\begin{equation}
\sup_n 
\left(T_n^{c'_1}\left|\Gam_{\nu,n} - \Gam_{\nu,0} \right|
+
\sup_{\nu}\left| \frac{1}{T_n} \p_{\nu}^{3}\mbbh_{2,n}(\nu)\right|
\right)
+
\sup_n \E\left[|\D_{\nu,n}|^K\right] 
< \infty,
\nonumber
\end{equation}
where $\Gam_{\nu,n} := - T_n^{-1}\p_\nu^2 \mbbh_{2,n}(\nu_0)$.
\end{lem}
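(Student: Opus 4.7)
The plan is to exploit a convenient algebraic feature of the Student-$t$ log-density. Since
\[
\rho(\ve;\nu) = \log\Gam\!\left(\tfrac{\nu+1}{2}\right) - \log\Gam\!\left(\tfrac{\nu}{2}\right) - \tfrac12\log\pi - \tfrac{\nu+1}{2}\log(1+\ve^2),
\]
the variable $\ve$ enters only through the last term, which is affine in $\nu$; consequently $\p_\nu^k\rho(\ve;\nu)$ is independent of $\ve$ for every $k\ge 2$. This algebraic coincidence eliminates the delicate $\hep_i\to\ve_i$ replacement step one would otherwise expect, and makes both the observed-information and third-derivative pieces essentially deterministic functions of $\nu$ times the factor $[T_n]$.

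Concretely, direct differentiation yields $\p_\nu^2\mbbh_{2,n}(\nu_0)=[T_n]\cdot\tfrac14\{\psi_1((\nu_0+1)/2)-\psi_1(\nu_0/2)\}=-[T_n]\Gam_{\nu,0}$, hence $\Gam_{\nu,n}=([T_n]/T_n)\Gam_{\nu,0}$ and
\[
|\Gam_{\nu,n}-\Gam_{\nu,0}| = \left|\frac{[T_n]}{T_n}-1\right|\Gam_{\nu,0} \lesssim T_n^{-1},
\]
which beats $T_n^{-c'_1}$ for every $c'_1\in(0,1/2)$. Likewise, $T_n^{-1}\p_\nu^3\mbbh_{2,n}(\nu)=([T_n]/T_n)\cdot\tfrac18\{\p\psi_1((\nu+1)/2)-\p\psi_1(\nu/2)\}$ with $\p\psi_1$ continuous on $\overline{\Theta_\nu}\subset(0,\infty)$, so $\sup_n\sup_\nu|T_n^{-1}\p_\nu^3\mbbh_{2,n}(\nu)|<\infty$ is immediate.

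For the score, $\D_{\nu,n}=T_n^{-1/2}\sumi \p_\nu\rho(\ve_i;\nu_0)$ is a normalized sum of i.i.d.\ random variables with mean zero by the standard score identity (since $\ve_1\sim t_{\nu_0}$ and $f(\cdot;0,1,\nu_0)$ is a proper density). By \eqref{hm:rho.prop-1}, $|\p_\nu\rho(\ve;\nu_0)|\lesssim 1+\log(1+\ve^2)$, and every polynomial moment of $\log(1+\ve_1^2)$ is finite under $t_{\nu_0}$, because $(\log|y|)^K|y|^{-(\nu_0+1)}$ is integrable at infinity for any $\nu_0>0$. Rosenthal's inequality (equivalently Marcinkiewicz-Zygmund / BDG) then yields $\sup_n\E[|\D_{\nu,n}|^K]<\infty$ for every $K>0$. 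Had $\p_\nu^2\rho$ genuinely depended on $\ve$, one would need to mimic the event-decomposition argument from the proof of Lemma \ref{hm:t_tpe_lem1} to bridge $\hep_i$ and $\ve_i$ (using \eqref{hm:CQMLE.tpe} and \eqref{T/N->0}); the linearity in $\nu$ of the $\ve$-dependent term of $\rho$ makes that step unnecessary, so there is no real obstacle here.
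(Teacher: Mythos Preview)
Your proof is correct and follows essentially the same approach as the paper: both recognize that $\p_\nu^k\rho(\ve;\nu)$ is independent of $\ve$ for $k\ge 2$ (this is exactly the content of \eqref{hm:ep.hat-ep}), making the first two terms deterministic and trivially bounded, and both handle the score $\D_{\nu,n}$ via the i.i.d.\ centered structure of $\p_\nu\rho(\ve_i;\nu_0)$. The only minor difference is that the paper, in its equation \eqref{hm:tpe_lem2-1}, additionally records the decomposition of $T_n^{-1/2}\p_\nu\mbbh_{2,n}(\nu_0)$ (which involves $\hep_i$) into $\D_{\nu,n}$ plus an $L^K$-negligible remainder---an estimate that is reused later in the asymptotic-normality proof---whereas you stick to $\D_{\nu,n}$ as literally defined, which is all the lemma statement asks for.
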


\begin{proof}
By \eqref{hm:ep.hat-ep}, the finiteness of the first (non-random) term is trivial for $c_1'\in(0,1]$.
Turning to the second term, we have
\begin{equation}
\D_{\nu,n}
= \frac{1}{\sqrt{T_n}}\sumi \p_\nu\rho(\ve_i;\nu_0)
+\frac{1}{\sqrt{T_n}}\sumi \left( \p_\nu\rho(\hep_i;\nu_0) - \p_\nu\rho(\ve_i;\nu_0) \right)
\label{hm:tpe_lem2-1}
\end{equation}
Let $K>0$.
Since $\E[\p_\nu\rho(\ve_i;\nu_0)]=0$, the first term is obviously $L^K(\pr)$-bounded.
Exactly in the same way as in the proof of \eqref{hm:tpe_lem1-1} with $c_2'=1/2$, the second term converges in $L^K$ to $0$.
\end{proof}


\subsubsection{Proof of (\ref{hm:tQMLE.an}): Asymptotic normality}
\label{hm:sec_t.an}

To prove the asymptotic normality \eqref{hm:tQMLE.an}, we introduce the concave random function
\begin{equation}
\Lam_n(u) := \mbbh_{2,n}\left(\nu_0 + \frac{u}{\sqrt{T_n}}\right) - \mbbh_{2,n}(\nu_0)
\label{hm:t.Lam_def}
\end{equation}
defined for $u\in\{v\in\mbbr:\,\nu_0 + v\,T_n^{-1/2} \in \Theta_\nu\}$; obviously, $\hat{u}_{\nu,n} \in \argmax \Lam_n$.
By means of \cite[Basic Lemma]{HjoPol93}, we can conclude that
\begin{equation}
\hat{u}_{\nu,n} = \Gam_{\nu,0}^{-1}\D_{\nu,n} + o_p(1) \cil N\left(0,\Gam_{\nu,0}^{-1}\right)
\nonumber
\end{equation}
by showing the locally asymptotically quadratic structure: for each $u\in\mbbr$,
\begin{equation}
\Lam_n(u) = \D_{\nu,n} u - \frac12 \Gam_{\nu,0}u^2 + o_p(1),
\label{hm:nu.laq}
\end{equation}
where the random variable $\D_{\nu,n} \cil N(0,\Gam_{\nu,0})$.
By Lemma \ref{hm:tpe_lem2} we are left to show the asymptotic normality of $\D_{\nu,n}$.
But it is trivial since $\ep \mapsto \p_\nu\rho(\ep;\nu)$ is smooth uniformly in a neighborhood of $\nu_0$ so that $\E[\{\p_\nu\rho(\ve_1,\nu_0)\}^2] = -\E[\p^2_\nu\rho(\ve_1,\nu_0)]=\Gam_{\nu,0}$.

\subsection{Proof of Theorem \ref{hm:thm_CQMLE.AN}}

It suffices to show \eqref{hm:CQMLE.an} and \eqref{hm:tQMLE.an} individually, \yurev{the same arguments} as in Section \ref{hm:sec_jan} are valid to deduce the asymptotic orthogonality of $\hat{u}_{a,n}$ and $\hat{u}_{\nu,n}$.

For the consistency of $\hat{a}_n$, it suffices to verify $\sup_a| \mbby_{1,n}(a) - \mbby_1(a)| \cip 0$.
In Section \ref{hm:sec_proof_C.lem1}, we considered the expression
\begin{equation}
\mbby_{1,n}(a) - \mbby_{1}(a) = 
\mbby^{\ast\ast}_{1,n}(a) - \mbby_{1}(a) 
+ \frac{1}{\sqrt{N_n}} \sumjj \frac{1}{\sqrt{N_n}}\widetilde{\zeta}_j(a) + \del_{11,n}(a) + \del_{12,n}(a).
\nonumber
\end{equation}
Assumption \ref{hm:A_X_AN} implies that $N_n^{-1}\sum_{j=1}^{N_n} |h^{-1}( \D_j X - h X'_{t_{j-1}})|^2 = o_p(1)$, hence $\sup_a |\del_{11,n}(a)| = o_p(1)$ from the expression \eqref{hm:def_del11a}.
Recall \eqref{hm:def_del.Y.ast} and \eqref{hm:cqmle_lem1-p1}.
By the definition \eqref{hm:zeta_def} we have 
\begin{equation}
\sup_a |\zeta_j(a)| \lesssim 1+ \log(1+\ep_j^2) + \log(1+|X'_{t_{j-1}}|^2) \lesssim 1+ \log(1+\ep_j^2),
\nonumber
\end{equation}
so that Lemma \ref{hm:lem_loc.lim.thm} gives $\sup_a |\del_{12,n}(a)| = o_p(1)$.
Since we also have $\sup_a |\p_a \zeta_j(a)| \lesssim 1+|X'_{t_{j-1}}| \lesssim 1$, the Burkholder and Sobolev inequalities yield $\sup_a |\sumjj N_n^{-1/2}\widetilde{\zeta}_j(a)|=O_p(1)$.
Finally, \yurev{from \eqref{hm:cqmle_lem2-p1}}, we have $\mbby^{\ast\ast}_{1,n}(a) - \mbby_{1}(a) \cip 0$ for each $a$ and moreover,
\begin{equation}
\sup_a |\p_a\mbby^{\ast\ast}_{1,n}(a)| + \sup_a |\p_a \mbby_{1}(a)| 
\lesssim \frac{1}{N_n^{-1}}\sum_{j=1}^{N_n}\left|\frac1h \D_j X\right| + 1 = O_p(1).
\nonumber
\end{equation}
Hence $\sup_a |\mbby^{\ast\ast}_{1,n}(a) - \mbby_{1}(a)| \cip 0$, followed by $\sup_a| \mbby_{1,n}(a) - \mbby_1(a)| \cip 0$.
For the asymptotic normality, 
\hmrev{
we can follow the same line as in the proof of Lemma \ref{hm:cqmle_lem3} (Section \ref{hm:sec_aAN_lem_proof}) under Assumption \ref{hm:A_X_AN} except for the following point:
concerned with the second term in \eqref{hm:Da-p1} about the first-stage quasi-score $\D_{a,n}$, we here do not require the moment boundedness (the last condition in Assumption \ref{hm:A_X}.2) but just its negligibility.
Recall the notation $R_{n,j}= h^{-1}(\D_j X  - h X'_{t_{j-1}})$ and $g_1(\ep)=\p_{\ep}\phi_1(\ep)/\phi_1(\ep)$.
By the compensation of $R_{n,j}\,g_1(\ep_j)$ and then applying the Burkholder inequality, it is straightforward to verify the negligibility as follows:
for the martingale part $M'_n := \sumjj N_n^{-1/2}\{R_{n,j}\,g_1(\ep_j) - \E^{j-1}[R_{n,j}\,g_1(\ep_j)]\}$, we can estimate its quadratic characteristic as
\begin{align}
\la M'\ra_n \lesssim \frac{1}{N_n} \sumjj \E^{j-1}[|R_{n,j}|^2] \cip 0
\nonumber
\end{align}
by Assumption \ref{hm:A_X_AN}.3; and moreover, the compensation part is negligible under Assumption \ref{hm:A_X_AN}.5.
}
Thus, we conclude \eqref{hm:CQMLE.an}.

To deduce \eqref{hm:tQMLE.an} under $\sqrt{N_n}(\hat{a}_n - a_0)=O_p(1)$, we make use of what we have seen in Sections \ref{hm:sec_t.tpe} and \ref{hm:sec_t.an}.
The $u$-wise asymptotically quadratic structure \eqref{hm:nu.laq} of $\Lam_n(\cdot)$ holds since the estimate $\sup_n |\Gam_{\nu,n} - \Gam_{\nu,0} | + \sup_{n,\nu}| T_n^{-1}\p_{\nu}^{3}\mbbh_{2,n}(\nu)| < \infty$ is valid as before.
Note that
\begin{align}
\frac{1}{T_n}\sumi | X_i - X_{i-1}| &= \frac{1}{n T_n}\sumi \sum_{j\in A_i} \left| \frac1h \D_j X \right|
\nn\\
&\lesssim \left(\frac{1}{[n T_n]}\sum_{j=1}^{[n T_n]} \left|\frac1h \left( \D_j X - h X'_{t_{j-1}}\right)\right|^2 \right)^{1/2} + 1
=O_p(1).
\nonumber
\end{align}
By \eqref{hm:rho.prop-1} and \eqref{hm:hep-ep.estimate}, the second term in the right-hand side of \eqref{hm:tpe_lem2-1} can be bounded as follows:
\begin{align}
& \left|\frac{1}{\sqrt{T_n}}\sumi \left( \p_\nu\rho(\hep_i;\nu_0) - \p_\nu\rho(\ve_i;\nu_0) \right)\right|
\nn\\
&= \left|\frac{1}{T_n}\sumi \int_0^1 \p_\ep\p_\nu 
\rho\left(\ve_i + s(\hep_i-\ve_i);\nu_0\right)ds\,\sqrt{N_n}(\hep_i-\ve_i) \sqrt{\frac{T_n}{N_n}}\right|
\nn\\
&\lesssim 
\left(|\hat{u}_{\sig,n}|\, \frac{1}{T_n}\sumi |\ve_i| + |\hat{u}_{\mu,n}| \, \frac{1}{T_n}\sumi (1+|X_i - X_{i-1}|) \right)\sqrt{\frac{T_n}{N_n}} 
\nn\\
&= O_p\left(\sqrt{\frac{T_n}{N_n}}\right)=o_p(1).
\nonumber
\end{align}
This leads to \eqref{hm:tQMLE.an}.

\subsection{Proof of Theorem \ref{hm:thm_CQMLE.AN-SDE}}
\label{hm:proof_SDE}

First, we will verify Assumption \ref{hm:A_X_AN} (except for item 4 therein) under Assumption \ref{hm:assump_SDE}.
\hmrev{
First, we show items 1 and 2 partly related to the ergodic behavior of the Markov process $Y$.
}
The {\lm} $\nu(dz)$ of $J$ satisfies that $\nu(\{z:\,|z|<c)>0$ for each $c>0$ (see \eqref{hm:J.Ld}).
\hmrev{
From the proof of \cite[Proposition 5.4]{Mas13as} under Assumption 5.2(ii) therein, we see that the Local Doeblin (LD) condition holds for $Y$ (see \cite{Kul09} for details). The LD condition implies that, for any constant $\D>0$, every compact set is petite for the discrete-time Markov chain $(Y_{m\D})_{m\ge 0}$.
Moreover, the {\lp} $J$ is centered and the function $\psi(y)=y^2$ belongs to the domain of the (extended) generator $\mca$ of $Y$:
\begin{align}
\mca \psi(y) &:= \p\psi(y)\, \mu_0\cdot b(y) + \int \left(
\psi(y+\sig_0 z) - \psi(y) - \p\psi(y) \sig_0 z\right)g(z)dz
\nn\\
&
=2y \mu_0\cdot b(y) + \frac{\sig_0^2}{\nu_0-2}.
\nonumber
\end{align}
By the latter condition in Assumption \ref{hm:assump_SDE}, we can find constants $K,c>0$ such that $\mca \psi(\yurev{y}) \le -c$ for every $|y|\ge K$; as a matter of fact, it suffices that the rightmost side is negative uniformly in $|y|\ge K$, but then the condition involves the unknown $\sig_0$ and $\nu_0$ hence inconvenient.
Obviously, there exists a constant $d'=d'(K)>0$ for which $\mca \psi(\yurev{y}) \le d$ for every $|y|\le K$.
We get $\mca \psi(\yurev{y}) \le -c + (c+d') I(|y|\le K)$ for $y\in\mbbr$.
Hence, it follows from \cite[Theorem 2.1]{Mas07} that $Y$ is ergodic, which in particular means that there exists a unique invariant distribution $\pi_{0,Y}(dy)$ for which $Y$ satisfies the law of large numbers: for any continuously differentiable \yurev{$\pi_{0,Y}$-integrable} $g:\mbbr\to\mbbr$, we have \yurev{$T^{-1} \int_0^T g(Y_s)ds \to \int g(y)\pi_{0,Y} (dy)$} as $T\to\infty$. The last convergence holds for the bounded process $X'=b(Y)$ as well.
It is routine to show that $N_n^{-1}\sumjj g\circ b (Y_{t_{j-1}}) \cip \int g\circ b (y)\pi_{0,Y}(dy)$ for $g$ being bounded and smooth enough.
Thus, items 1 and 2 hold with $\pi_0(dx')=\pi_{0,Y}\circ b^{-1}(dx')$.
}

Turning to item 3, we will show $\lim_n N_n^{-1}\sum_{j=1}^{N_n}A_{nj}=0$, where
\begin{equation}
A_{nj} := \E\left[\left| \frac1h \left( \D_j X - h X'_{t_{j-1}}\right) \right|^2\right]
=\E\left[\left| \frac1h \int_{t_{j-1}}^{t_j} (b(Y_s) - b(Y_{t_{j-1}}))ds \right|^2\right].
\nonumber
\end{equation}
Fix $\ep>0$ and $H\in(0,1\wedge \nu_0)=(0,1)$. Pick a $\del>0$ for which $|b(x)-b(y)|<\sqrt{\ep}$ for every $x,y$ such that $|x-y|\le\del$.
By using the boundedness of $b$,
\begin{align}
A_{nj} &\le \frac1h \int_{t_{j-1}}^{t_j} \E\left[|b(Y_s) - b(Y_{t_{j-1}})|^2\right] ds
\nn\\
&\lesssim \frac1h \int_{t_{j-1}}^{t_j} \big(\pr[|Y_s-Y_{t_{j-1}}|>\del]
+\E\left[|b(Y_s) - b(Y_{t_{j-1}})|^2;\, |Y_s-Y_{t_{j-1}}|\le\del \right]\big)ds
\nn\\
&\le \frac1h \int_{t_{j-1}}^{t_j} \pr[|Y_s-Y_{t_{j-1}}|>\del]ds + \ep
\nn\\
&\lesssim \frac1h \int_{t_{j-1}}^{t_j} \left((s-t_{j-1})^H + \E\left[|J_{s-t_{j-1}}|^H\right]\right)ds + \ep
\nn\\
&\lesssim O(h^{H}) + \sup_{t\in[0,h]}\E\left[|J_{t}|^H\right] + \ep = O(h^{H}) + \ep.
\label{hm:SDE_p1}
\end{align}
In the last step, we applied \cite[Theorem 2(c)]{LusPag08} to conclude $\E[\sup_{t\in[0,h]}|J_{t}|^H] \lesssim h^{H}$.
Since $\ep>0$ was arbitrary, it follows that $\lim_n N_n^{-1}\sum_{j=1}^{N_n}A_{nj}=0$, hence item 3.

\hmrev{
Finally, we verify item 5.
Pick any constant $\del' \in (0,(1+\ep')/2]$ for $\ep'>0$ in \eqref{hm:B_order}.
By the essential boundedness of $b$, $\p b$, and $g_1$ and by the moment estimate
\begin{equation}
\E[|J_s-J_{t_{j-1}}|] \le \E\left[\sup_{t\in[0,h]}|J_{t}|\right] \lesssim h\log(1/h).
\nonumber
\end{equation}
due to \cite[Theorem 3]{LusPag08}, we have
\begin{align}
& \left|
\frac{1}{\sqrt{N_n}}\sumjj \E^{j-1}\left[ \frac1h \left(\D_j X  - h X'_{t_{j-1}}\right)
\, g_1\left(\frac{\D_j J}{h}\right)\right]
\right|
\nn\\
&\lesssim \frac{1}{N_n} \sumjj \sqrt{N_n} \left( h + \frac1h \int_j \E[|J_s-J_{t_{j-1}}|] ds \right)
\nn\\
&\lesssim 
\sqrt{N_n} \big( h + h \log(1/h) \big)
\lesssim h^{1-\del'} \sqrt{N_n} \times h^{\del'}\log(1/h) \lesssim h^{1-\del'} \sqrt{N_n} \times h^{\del'/2}.
\nonumber
\end{align}
Since $h^{1-\del'} \sqrt{N_n} \lesssim n^{\del'-(1+\ep')/2} \lesssim 1$, the last upper bound is $O_p(h^{\del'/2})=o_p(1)$, concluding item 5.
Thus, we have shown that Assumption \ref{hm:A_X_AN} follows from Assumption \ref{hm:assump_SDE}.
}

\medskip

We will complete the proof of Theorem \ref{hm:thm_CQMLE.AN-SDE} by showing that $\sqrt{N_n}(\tilde{a}_n - \hat{a}_n)=o_p(1)$ and $\sqrt{T_n}(\tilde{\nu}_n - \nes)=o_p(1)$.
Let $\del_n(a):=\tilde{\mbbh}_{1,n}(a) - \mbbh_{1,n}(a)$.
Tracing back and inspecting the proof of Theorem \ref{hm:thm_CQMLE.AN} reveal that it is sufficient for concluding $\sqrt{N_n}(\tilde{a}_n - \hat{a}_n)=o_p(1)$ to verify
\begin{align}
\left| \frac{1}{\sqrt{N_n}}\p_a \del_n(a_0)\right| + \max_{k\in\{0,2,3\}}\sup_a\left| \frac{1}{N_n}\p_a^k \del_n(a)\right| = o_p(1).
\label{hm:SDE.proof.1-1}
\end{align}
We can write $\del_n(a) = \sum_{j=1}^{N_n} \{\log\phi_1(\yurev{\ep'_j(a)}) - \log\phi_1(\yurev{\ep_j(a)})\} = \sum_{j=1}^{N_n} B_j(a) (h^{-1}\D_j X - b_{j-1})$ for some essentially bounded random functions $B_j(a)$ smooth in $a$.
By direct computations and Assumption \ref{hm:A_X_AN}.3,
\begin{equation}
\max_{k\in\{0,2,3\}} \sup_a\left| \frac{1}{N_n}\p_a^k \del_n(a) \right| \lesssim 
\frac{1}{N_n}\sum_{j=1}^{N_n} \left|h^{-1}\D_j X - b_{j-1}\right| = o_p(1)
\nonumber
\end{equation}
for each $k\in\{0,2,3\}$. In a similar way to \eqref{hm:SDE_p1},
\begin{align}
\left| \frac{1}{\sqrt{N_n}}\p_a \del_n(a_0) \right|
&\lesssim \frac{1}{N_n}\sum_{j=1}^{N_n} \frac1h \int_{t_{j-1}}^{t_j}\sqrt{N_n}|b(Y_s)-b(Y_{t_{j-1}})|ds
\nn\\
&\lesssim 
\frac{1}{N_n}\sum_{j=1}^{N_n} \frac1h \int_{t_{j-1}}^{t_j}\sqrt{N_n}|Y_s-Y_{t_{j-1}}|ds
\nn\\
&\lesssim 
\frac{1}{N_n}\sum_{j=1}^{N_n} \frac1h \int_{t_{j-1}}^{t_j}
\sqrt{N_n}\left( (s-t_{j-1}) + |J_s - J_{t_{j-1}}|\right)ds.
\label{hm:SDE_p2}
\end{align}
Therefore, by applying \cite[Theorem 3]{LusPag08} again,
\begin{align}
\E\left[ \left| \frac{1}{\sqrt{N_n}}\p_a \del_n(a_0) \right| \right]
&\lesssim h \sqrt{N_n} + \E\bigg[\sup_{t\in[0,h]}|J_t|\bigg] 
\lesssim O\left(h \log(1/h)\sqrt{N_n}\right).
\nonumber
\end{align}
The upper bound in the last display is $o(1)$ since $h\sqrt{N_n}\lesssim n^{-\ep'/2}$.

To show $\sqrt{T_n}(\tilde{\nu}_n - \nes)=o_p(1)$, we write $\tilde{\mbbh}_{2,n}(\nu) = \sumi \rho(\tep_i;\nu)$.
From the proof of (\ref{hm:tQMLE.an}) (Section \ref{hm:sec_t.an}),
\begin{align}
\tilde{\Lam}_n(u) &:= 
\tilde{\mbbh}_{2,n}\left(\nu_0 + \frac{u}{\sqrt{T_n}}\right) - \tilde{\mbbh}_{2,n}(\nu_0)
\nn\\
&= \frac{u}{\sqrt{T_n}} \p_\nu\tilde{\mbbh}_{2,n}(\nu_0) - \frac12 \Gam_{\nu,0}u^2 + o_p(1),
\nonumber
\end{align}
Proceeding as in \eqref{hm:SDE_p2}, we obtain (with recalling \eqref{hm:rho.prop-1})
\begin{align}
\left|\frac{1}{\sqrt{T_n}} \p_\nu\tilde{\mbbh}_{2,n}(\nu_0) - \D_{\nu,n}\right|
&\lesssim \frac{1}{T_n} \sumi \left(\big|\sqrt{T_n}(\tilde{\ep}_i - \hat{\ep}_i)\big|
+\big|\sqrt{T_n}(\hat{\ep}_i - \ep_i)\big|\right)
\nn\\
&\lesssim O_p\left(h \log(1/h)\sqrt{T_n}\right) = O_p\left(n^{-\ep'/2}(\log n)\frac{T_n}{N_n}\right) = o_p(1).
\nonumber
\end{align}
It follows that $\sqrt{T_n}(\tilde{\nu}_n - \nes)=o_p(1)$.

\bigskip

\noindent
\bmhead{Acknowledgments}
The authors are grateful to the two reviewers for their careful reading, which led to substantial improvements in the presentation and based on which we could fix some insufficient technical arguments.
This work was supported by JST CREST Grant Number JPMJCR2115, Japan, by JSPS KAKENHI Grant Number 22H01139, and by JSPS KAKENHI Grant Number 23K13023.

\section*{Declarations}
\bmhead{Conflict of interest}
On behalf of all authors, the corresponding author states that there is no conflict of interest.




\begin{thebibliography}{10}
\bibitem{AbrSte92}
M.~Abramowitz and I.~A. Stegun, editors.
\newblock {\em Handbook of mathematical functions with formulas, graphs, and
  mathematical tables}.
\newblock Dover Publications Inc., New York, 1992.
\newblock Reprint of the 1972 edition.

\bibitem{AdaFou03}
R.~A. Adams and J.~J.~F. Fournier.
\newblock {\em Sobolev spaces}, volume 140 of {\em Pure and Applied Mathematics
  (Amsterdam)}.
\newblock Elsevier/Academic Press, Amsterdam, second edition, 2003.


\bibitem{Bar98}
O.~E. Barndorff-Nielsen.
\newblock Processes of normal inverse {G}aussian type.
\newblock {\em Finance Stoch.}, 2(1):41--68, 1998.

\bibitem{BerDon97}
J.~Bertoin and R.~A. Doney.
\newblock Spitzer's condition for random walks and {L}\'evy processes.
\newblock {\em Ann. Inst. H. Poincar\'e Probab. Statist.}, 33(2):167--178,
  1997.


\bibitem{CleGlo19}
E.~Cl\'{e}ment and A.~Gloter.
\newblock Estimating functions for {SDE} driven by stable {L}\'{e}vy processes.
\newblock {\em Ann. Inst. Henri Poincar\'{e} Probab. Stat.}, 55(3):1316--1348,
  2019.

\bibitem{CleGlo20}
E.~Cl\'{e}ment and A.~Gloter.
\newblock Joint estimation for {SDE} driven by locally stable {L}\'{e}vy
  processes.
\newblock {\em Electron. J. Stat.}, 14(2):2922--2956, 2020.





\bibitem{GloYos21}
A.~Gloter and N.~Yoshida.
\newblock Adaptive estimation for degenerate diffusion processes.
\newblock {\em Electron. J. Stat.}, 15(1):1424--1472, 2021.

\bibitem{Har13}
A.~C. Harvey.
\newblock {\em Dynamic models for volatility and heavy tails}, volume~52 of
  {\em Econometric Society Monographs}.
\newblock Cambridge University Press, Cambridge, 2013.
\newblock With applications to financial and economic time series.

\bibitem{HjoPol93}
N.~L. Hj{\o}rt and D.~Pollard.
\newblock Asymptotics for minimisers of convex processes.
\newblock {\em Statistical Research Report, University of Oslo}, 1993.
\newblock Available at Arxiv preprint arXiv:1107.3806, 2011.

\bibitem{IvaKulMas15}
D.~Ivanenko, A.~M. Kulik, and H.~Masuda.
\newblock Uniform {LAN} property of locally stable {L\'{e}vy} process observed
  at high frequency.
\newblock {\em ALEA Lat. Am. J. Probab. Math. Stat.}, 12(2):835--862, 2015.

\bibitem{Kul18}
A.~Kulik.
\newblock {\em Ergodic behavior of {M}arkov processes}, volume~67 of {\em De
  Gruyter Studies in Mathematics}.
\newblock De Gruyter, Berlin, 2018.

\bibitem{Kul09}
A.~M. Kulik.
\newblock Exponential ergodicity of the solutions to {SDE}'s with a jump noise.
\newblock {\em Stochastic Process. Appl.}, 119(2):602--632, 2009.



\bibitem{LusPag08}
H.~Luschgy and G.~Pag{\`e}s.
\newblock Moment estimates for {L}\'evy processes.
\newblock {\em Electron. Commun. Probab.}, 13:422--434, 2008.

\bibitem{Massing2019}
T.~P.~G. Massing.
\newblock {\em Stochastic Properties of Student-{L\'{e}vy} Processes with
  Applications}.
\newblock PhD thesis, Universit{\"a}t Duisburg-Essen, 2019.

\bibitem{Mas07}
H.~Masuda.
\newblock Ergodicity and exponential {$\beta$}-mixing bounds for
  multidimensional diffusions with jumps.
\newblock {\em Stochastic Process. Appl.}, 117(1):35--56, 2007.

\bibitem{Mas13as}
H.~Masuda.
\newblock Convergence of {G}aussian quasi-likelihood random fields for ergodic
  {L}\'evy driven {SDE} observed at high frequency.
\newblock {\em Ann. Statist.}, 41(3):1593--1641, 2013.

\bibitem{Mas19spa}
H.~Masuda.
\newblock Non-{G}aussian quasi-likelihood estimation of {SDE} driven by locally
  stable {L}\'{e}vy process.
\newblock {\em Stochastic Process. Appl.}, 129(3):1013--1059, 2019.

\bibitem{Masuda2022}
H.~Masuda, L.~Mercuri, and Y.~Uehara.
\newblock {Noise inference for ergodic L^^c3^^a9vy driven SDE}.
\newblock {\em Electronic Journal of Statistics}, 16(1):2432 -- 2474, 2022.

\bibitem{Hiroki2024Student}
H.~Masuda, L.~Mercuri, and Y.~Uehara.
\newblock Student $t$-{L}\'evy regression model in yuima.
\newblock {\em ResearchGate preprint (10.13140/RG.2.2.26646.15682)}, 2024.



\bibitem{NeySco48}
J.~Neyman and E.~L. Scott.
\newblock Consistent estimates based on partially consistent observations.
\newblock {\em Econometrica}, 16(1):1--32, 1948.

\bibitem{ProPed22}
T.~Proietti and D.~J. Pedregal.
\newblock Seasonality in high-frequency time series.
\newblock {\em Econometrics and Statistics}, 2022.

\bibitem{Rai00}
S.~Raible.
\newblock {\em L\'{e}vy processes in finance: Theory, numerics, and empirical
  facts}.
\newblock PhD thesis, PhD thesis, Universit{\"a}t Freiburg i. Br, 2000.

\bibitem{Sat99}
K.-i. Sato.
\newblock {\em L\'evy processes and infinitely divisible distributions},
  volume~68 of {\em Cambridge Studies in Advanced Mathematics}.
\newblock Cambridge University Press, Cambridge, 1999.
\newblock Translated from the 1990 Japanese original, Revised by the author.

\bibitem{Shi95}
A.~N. Shiryaev.
\newblock {\em Probability}.
\newblock Springer-Verlag, New York, second edition, 1996.
\newblock Translated from the first (1980) Russian edition by R. P. Boas.



\bibitem{Yos11}
N.~Yoshida.
\newblock Polynomial type large deviation inequalities and quasi-likelihood
  analysis for stochastic differential equations.
\newblock {\em Ann. Inst. Statist. Math.}, 63(3):431--479, 2011.
\end{thebibliography}
\end{document}